  \theoremstyle{plain}
    \newtheorem{thm}{Theorem}[section]
    \newtheorem{prop}[thm]{Proposition}
   \newtheorem{lemma}[thm]{Lemma}
    \newtheorem{subsec}[thm]{}
\theoremstyle{definition}
    \newtheorem{defn}[thm]{Definition}
        \newtheorem{remark}[thm]{Remark}
    \newtheorem{exam}[thm]{Example}
\theoremstyle{remark}
\title{}
\author{}
\date{}
\begin{document}

\title[Representations and cohomology of a matched pair of Lie algebras]{Representations and cohomology of a matched pair of Lie algebras, and $L_\infty$-algebras}

\author{Anusuiya Baishya}
\address{Department of Mathematics,
Indian Institute of Technology, Kharagpur 721302, West Bengal, India.}
\email{anusuiyabaishya530@gmail.com}

\author{Apurba Das}
\address{Department of Mathematics,
Indian Institute of Technology, Kharagpur 721302, West Bengal, India.}
\email{apurbadas348@gmail.com, apurbadas348@maths.iitkgp.ac.in}

%\author{Ramkrishna Mandal}
%\address{Department of Mathematics, Indian Institute of Technology, Kharagpur 721302, West Bengal, India.}
%\email{ramkrishnamandal430@gmail.com}

\begin{abstract}
The notion of a matched pair of Lie algebras was introduced in the study of Lie bialgebras and Poisson-Lie groups. In this paper, we introduce representations and cohomology of a matched pair of Lie algebras. We show that there is a morphism from the cohomology of a Lie bialgebra to the cohomology of the corresponding matched pair of Lie algebras. Our cohomology is also useful to study infinitesimal deformations and abelian extensions. In the last part, we define the notion of a matched pair of $L_\infty$-algebras and construct the corresponding bicrossed product. Finally, we show that a matched pair of skeletal $L_\infty$-algebras is closely related to the cohomology introduced in the paper.
\end{abstract}

\maketitle

%\curraddr{}
%\email{}

%\subjclass[2010]{}
%\keywords{}

\medskip

\medskip

\medskip

{\em Mathematics Subject Classification (2020).} 17B10, 17B56, 17B62, 17B70, 18N40.

{\em Keywords.} Matched pair of Lie algebras, Representations, Cohomology, $L_\infty$-algebras.

%Averaging algebras of nonzero weight, Triassociative algebras, $L_\infty$-algebras, Cohomology, Homotopy algebras.

%\medskip

%\noindent {\sf Date of resubmission:} July 26, 2021.

\thispagestyle{empty}

\tableofcontents

%\vspace{0.2cm}

\medskip

\section{Introduction}\label{sec1}
The notion of a Lie bialgebra was introduced by Drinfel'd \cite{drinfeld} in the study of the algebraic realization of a Poisson-Lie group. A Lie bialgebra is a pair $(\mathfrak{g}, \mathfrak{g}^*)$ consisting of Lie algebras $\mathfrak{g}$ and $\mathfrak{g}^*$ such that the map $\delta: \mathfrak{g} \rightarrow \wedge^2 \mathfrak{g}$ (dualizing the Lie bracket of $\mathfrak{g}^*$) is a $1$-cocycle of the Lie algebra $\mathfrak{g}$ with coefficients in the adjoint representation in $\wedge^2 \mathfrak{g}$. Then it turns out that the direct sum $\mathfrak{g} \oplus \mathfrak{g}^*$ inherits a new Lie algebra structure (called the double of the Lie bialgebra) for which $\mathfrak{g}$ and $\mathfrak{g}^*$ are both Lie subalgebras. The abstraction of the structures of $\mathfrak{g} \oplus \mathfrak{g}^*$ leads to the concept of a matched pair of Lie algebras (also called double Lie algebras \cite{lu-weins}, or twilled extensions of Lie algebras \cite{koss}). Precisely, a matched pair of Lie algebras is a quadruple $(\mathfrak{g}, \mathfrak{h}, \rho, \psi)$ in which $\mathfrak{g}$, $\mathfrak{h}$ are both Lie algebras, $\rho: \mathfrak{g} \times \mathfrak{h} \rightarrow \mathfrak{h}$ and $\psi:\mathfrak{h} \times \mathfrak{g} \rightarrow \mathfrak{g}$ are actions of Lie algebras satisfying some compatibility conditions (see Definition \ref{defn-mpl} for details). If $(\mathfrak{g}, \mathfrak{g}^*)$ is a Lie bialgebra then the quadruple $(\mathfrak{g}, \mathfrak{g}^*, \mathrm{ad}^*_\mathfrak{g}, \mathrm{ad}^*_{\mathfrak{g}^*})$ forms a matched pair of Lie algebras, where $\mathrm{ad}^*_\mathfrak{g} : \mathfrak{g} \times \mathfrak{g}^* \rightarrow \mathfrak{g}^*$ and $\mathrm{ad}^*_{\mathfrak{g}^*} : \mathfrak{g}^* \times \mathfrak{g} \rightarrow \mathfrak{g}$ are the coadjoint actions. Given a matched pair of Lie algebras $(\mathfrak{g}, \mathfrak{h}, \rho, \psi)$, one can define the bicrossed product Lie algebra $\mathfrak{g} \Join \mathfrak{h}$ for which $\mathfrak{g}$ and $\mathfrak{h}$ are both Lie subalgebras. A matched pair of Lie algebras can also be characterized by a triple $(\mathfrak{g} \oplus \mathfrak{h}, \mathfrak{g}, \mathfrak{h})$, where $\mathfrak{g} \oplus \mathfrak{h}$ carries a Lie algebra structure for which $\mathfrak{g}$ and $\mathfrak{h}$ are both Lie subalgebras.

\medskip

It is important to remark that the notion of a matched pair of Lie algebras has been generalized to various contexts. In \cite{etingof} the authors have considered the notion of a matched pair of groups that are closely related to braided groups and the set-theoretical solutions of the Yang-Baxter equation. The concept of a matched pair of Leibniz algebras was defined in \cite{agore} to study the problem of extending structures. In the geometric context, matched pairs of Lie groupoids and Lie algebroids are extensively studied in \cite{mack,mokri}.

\medskip

In the first part of this paper, we take the initiative to study representations and cohomology of a matched pair of Lie algebras. A representation of a matched pair of Lie algebras $(\mathfrak{g}, \mathfrak{h}, \rho, \psi)$ is given by a quadruple $(V, W, \alpha, \beta)$ in which $V$ and $W$ are both representations of the Lie algebras $\mathfrak{g}$ and $\mathfrak{h}$, and $\alpha: V \times \mathfrak{h} \rightarrow W$ and $\beta: W \times \mathfrak{g} \rightarrow V$ are bilinear maps (called the pairing maps) satisfying a set of identities (see Definition \ref{defn-mpl-rep}). It turns out that a matched pair of Lie algebras $(\mathfrak{g}, \mathfrak{h}, \rho, \psi)$ can be realized as a representation of itself, called the adjoint representation. We also give the semidirect product construction in the context of matched pairs of Lie algebras. Next, given two vector spaces $\mathfrak{g}$ and $\mathfrak{h}$ (not necessarily equipped with any additional structures), we construct a graded Lie algebra whose Maurer-Cartan elements correspond to matched pairs of Lie algebra structures on the pair of vector spaces $(\mathfrak{g}, \mathfrak{h})$. This characterization is useful to define the cohomology of a matched pair of Lie algebras with coefficients in the adjoint representation. We show that there is a morphism from the cohomology of a matched pair of Lie algebras $(\mathfrak{g}, \mathfrak{h}, \rho, \psi)$ to the Chevalley-Eilenberg cohomology of the bicrossed product Lie algebra $\mathfrak{g} \Join \mathfrak{h}$. We also find a morphism from the cohomology of a Lie bialgebra $(\mathfrak{g}, \mathfrak{g}^*)$ to the cohomology of the corresponding matched pair of Lie algebras $(\mathfrak{g}, \mathfrak{g}^*, \mathrm{ad}^*_\mathfrak{g}, \mathrm{ad}^*_{\mathfrak{g}^*})$. Finally, we also define the cohomology with coefficients in arbitrary representation. As applications of our cohomology theories, we study infinitesimal deformations and abelian extensions of a matched pair of Lie algebras.

\medskip

On the other hand, an $L_\infty$-algebra (also called a strongly homotopy Lie algebra) is a generalization of Lie algebra where the Jacobi identity holds up to coherent homotopy. They were first introduced by Lada and Stasheff \cite{lada-stasheff}, and further studied by Lada and Markl \cite{lada-markl}. $L_\infty$-algebras has many applications in mathematics and physics including deformation theory \cite{doubek} (specifically, deformation quantization of Poisson manifolds \cite{kont}), classical field theory \cite{rogers} and higher differential geometry \cite{rogers,roy}. Any Lie algebra, graded Lie algebra and differential graded Lie algebra can be seen as particular examples of $L_\infty$-algebras. In \cite{baez-crans} Baez and Crans considered those $L_\infty$-algebras whose underlying graded vector spaces are concentrated in two degrees. They call them $2$-term $L_\infty$-algebras. Among others, they considered skeletal $L_\infty$-algebras (which are $2$-term $L_\infty$-algebras with zero differential) and showed that they are closely related to Chevalley-Eilenberg $3$-cocycles of Lie algebras.

\medskip

In another part of this paper, we introduce the notion of a matched pair of $L_\infty$-algebras. Any matched pair of Lie algebras is an example of a matched pair of $L_\infty$-algebras. As a particular case of a matched pair of $L_\infty$-algebras, we obtain a matched pair of graded Lie algebras. Given a Lie algebra $\mathfrak{g}$, we show that the Nijenhuis-Richardson graded Lie algebra acts on the cup-product Lie algebra by derivations. Hence we obtain an example of a matched pair of graded Lie algebras. 
%Moreover, the Nijenhuis-Richardson graded Lie algebra and the Fr\"{o}licher-Nijenhuis graded Lie algebra forms a generic example of a matched pair of graded Lie algebras. 
%In \cite{grut} the authors have considered the notion of a matched pair of Courant algebroids. We show that a matched pair of Courant algebroids give rise to a matched pair of $L_\infty$-algebras. 
%We also observe that a matched pair of Leibniz algebras introduced in \cite{agore} give rise to a matched pair of $L_\infty$-algebras.
Next, we also find a Maurer-Cartan characterization of a matched pair of $L_\infty$-algebras. We also show that a matched pair of $L_\infty$-algebras give rise to a bicrossed product $L_\infty$-algebra. This generalizes the well-known bicrossed product in the homotopy context. In \cite{zhu} the authors have shown that a Rota-Baxter operator of weight $1$ on a Lie algebra $\mathfrak{g}$ gives rise to a matched pair of Lie algebras. We generalize their result to those $L_\infty$-algebras $(\mathcal{G}, \{ \mu_k \}_{k \geq 1})$ in which $\mu_{k \geq 4} = 0$. Finally, we put our attention to matched pairs of skeletal $L_\infty$-algebras. We show that a matched pair of skeletal $L_\infty$-algebras is closely related to certain $3$-cocyles of matched pairs of Lie algebras.

\medskip

The paper is organized as follows. In Section \ref{sec2}, we recall the Chevalley-Eilenberg cohomology of a Lie algebra and revise the notion of a matched pair of Lie algebras. In Sections \ref{sec3} and \ref{sec4}, we respectively study representations and cohomology of a matched pair of Lie algebras. In particular, we construct the graded Lie algebra whose Maurer-Cartan elements correspond to matched pairs of Lie algebras on a given pair of vector spaces. As applications of our cohomology, in Section \ref{sec5}, we study infinitesimal deformations and abelian extensions of a matched pair of Lie algebras. In Section \ref{sec6}, we introduce the notion of a matched pair of $L_\infty$-algebras and define the bicrossed product. Finally, in Section \ref{sec7}, we consider matched pairs of skeletal $L_\infty$-algebras and characterize them in terms of cohomology.

\medskip

All vector spaces, graded vector spaces, (multi)linear maps, tensor products and wedge products are over a field ${\bf k}$ of characteristic zero.

\section{Lie algebras and matched pair of Lie algebras}\label{sec2}
In this section, we recall some necessary background on Lie algebras and matched pairs of Lie algebras. Among others, we describe the Chevalley-Eilenberg cohomology of a Lie algebra and the bicrossed product of a matched pair of Lie algebras.

Let $(\mathfrak{g}, [~, ~]_\mathfrak{g})$ be a Lie algebra. That is, $\mathfrak{g}$ is a vector space equipped with a skew-symmetric bilinear operation (called the bracket) $[~,~]_\mathfrak{g}: \mathfrak{g} \times \mathfrak{g} \rightarrow \mathfrak{g}$ satisfying the Jacobi identity:
\begin{align*}
    [[x, y]_\mathfrak{g} , z]_\mathfrak{g} +  [[ y, z]_\mathfrak{g} , x]_\mathfrak{g} +  [[z, x]_\mathfrak{g} , y]_\mathfrak{g} = 0, \text{ for all } x, y, z \in \mathfrak{g}.
\end{align*}
The Lie algebra  $(\mathfrak{g}, [~, ~]_\mathfrak{g})$ is often denoted simply by $\mathfrak{g}$ when the bracket is clear from the context. A {\em representation} of the Lie algebra  $(\mathfrak{g}, [~, ~]_\mathfrak{g})$ is a vector space $V$ together with a bilinear map (called the action map) $\rho: \mathfrak{g} \times V \rightarrow V$, $(x, v) \mapsto \rho_x v$ that satisfies
\begin{align*}
    \rho_{[x, y]_\mathfrak{g}} v = \rho_x \rho_y v - \rho_y \rho_x v, \text{ for } x, y \in \mathfrak{g}, v \in V.
\end{align*}
A representation as above may be denoted by $(V, \rho)$ or simply by $V$ when the action is understood.
Note that the Lie algebra $\mathfrak{g}$ can be realized as a representation of itself with the action map $\rho: \mathfrak{g} \times \mathfrak{g} \rightarrow \mathfrak{g}$ being the Lie bracket $[~,~]_\mathfrak{g}$. This is called the {\em adjoint representation}.

Let $(\mathfrak{g}, [~, ~]_\mathfrak{g})$ be a Lie algebra and $V$ be a representation of it with the action map $\rho : \mathfrak{g} \times V \rightarrow V$. Then the Chevalley-Eilenberg cochain complex of the Lie algebra $\mathfrak{g}$ with coefficients in the representation $V$ is given by $\{ C^\bullet (\mathfrak{g}; V), \delta_\mathrm{CE} \}$, where 
\begin{align*}
C^{n \geq 0} (\mathfrak{g}; V) := \mathrm{Hom} (\wedge^n \mathfrak{g}, V) = \{ f : \underset{n \text{ copies}}{\mathfrak{g} \times \cdots \times \mathfrak{g}} \rightarrow V |~ f \text{ is skew-symmetric and multilinear}\}.
\end{align*}
The coboundary map $\delta_\mathrm{CE} : C^n (\mathfrak{g}; V) \rightarrow C^{n+1} (\mathfrak{g} ; V)$ is given by
\begin{align*}
    ( \delta_\mathrm{CE} (f)) (x_1, \ldots, x_{n+1}) =~& \sum_{i=1}^{n+1} (-1)^{i+1} \rho_{x_i} f (x_1, \ldots, \widehat{x_i}, \ldots, x_{n+1}) \\
   ~& + \sum_{1 \leq i < j \leq n+1} (-1)^{i+j} f ([x_i, x_j]_\mathfrak{g}, x_1, \ldots, \widehat{x_i}, \ldots, \widehat{x_j}, \ldots, x_{n+1}),
\end{align*}
for $f \in C^n (\mathfrak{g} ; V)$ and $x_1, \ldots, x_{n+1} \in \mathfrak{g}$. The corresponding cohomology groups are called the Chevalley-Eilenberg cohomology of the Lie algebra $\mathfrak{g}$ with coefficients in the representation $V$. The cohomology groups are usually denoted by $H^\bullet_\mathrm{CE} (\mathfrak{g}; V)$.

Let $\mathfrak{g}$ be a vector space (not necessarily equipped with any additional structure). For each $m, n \geq 0$, the {\em Nijenhuis-Richardson bracket} 
\begin{align*}
    [~,~]_\mathrm{NR} : \mathrm{Hom} (\wedge^{m+1} \mathfrak{g} , \mathfrak{g}) \times \mathrm{Hom} (\wedge^{n+1} \mathfrak{g} , \mathfrak{g}) \rightarrow \mathrm{Hom} (\wedge^{m+n+1} \mathfrak{g} , \mathfrak{g})
\end{align*}
is given by
\begin{align}\label{nr-br}
    [f, g]_\mathrm{NR} = i_f g - (-1)^{mn} i_g f, \text{ where }
\end{align}
\begin{align*}
    (i_f g) (x_1, \ldots, x_{m+n+1} ) := \sum_{\sigma \in \mathrm{Sh} (m+1, n)} (-1)^\sigma g \big( f( x_{\sigma (1)}, \ldots,  x_{\sigma (m+1)}  ), x_{\sigma (m+2)}, \ldots, x_{\sigma (m+n+1)}   \big),
\end{align*}
for $f \in  \mathrm{Hom} (\wedge^{m+1} \mathfrak{g} , \mathfrak{g})$, $g \in  \mathrm{Hom} (\wedge^{n+1} \mathfrak{g} , \mathfrak{g})$ and $x_1, \ldots, x_{m+n+1} \in \mathfrak{g}$. The importance of the Nijenhuis-Richardson bracket is given by the following result  \cite{nij-ric}.

\begin{thm}
    (Nijenhuis-Richardson) Let $\mathfrak{g}$ be a vector space. 

    (i) Then the Nijenhuis-Richardson bracket $[~,~]_\mathrm{NR}$ makes the graded space $\oplus_{n \geq 0} \mathrm{Hom} (\wedge^{n+1} \mathfrak{g}, \mathfrak{g})$ of all skew-symmetric multilinear maps on $\mathfrak{g}$ into a graded Lie algebra. 

    (ii) A skew-symmetric bilinear map/bracket $\mu = [~,~]_\mathfrak{g} : \mathfrak{g} \times \mathfrak{g} \rightarrow \mathfrak{g}$ defines a Lie algebra structure on the vector space $\mathfrak{g}$ if and only if $\mu \in \mathrm{Hom}(\wedge^2 \mathfrak{g}, \mathfrak{g})$ is a Maurer-Cartan element of the graded Lie algebra $( \oplus_{n \geq 0} \mathrm{Hom} (\wedge^{n+1} \mathfrak{g}, \mathfrak{g}) , [~,~]_\mathrm{NR})$. 
\end{thm}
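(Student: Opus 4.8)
The plan is to prove the two assertions in turn, using the grading in which an element of $\Hom(\wedge^{n+1}\mathfrak{g},\mathfrak{g})$ sits in degree $n$; with this convention the bracket $[~,~]_\mathrm{NR}$ of \eqref{nr-br} is additive in degrees, hence a degree-zero operation, and the skew-symmetric bracket $\mu$ of part (ii) has odd degree $1$. For (i), the graded skew-symmetry $[f,g]_\mathrm{NR} = -(-1)^{mn}[g,f]_\mathrm{NR}$ is read off directly from \eqref{nr-br}: substituting into $-(-1)^{mn}[g,f]_\mathrm{NR} = -(-1)^{mn}\big(i_g f - (-1)^{mn} i_f g\big)$ and simplifying with $(-1)^{2mn}=1$ returns $i_f g - (-1)^{mn} i_g f = [f,g]_\mathrm{NR}$. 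The real content is the graded Jacobi identity, and the cleanest route is to extract the pre-Lie structure hidden in the insertion operation $i$ alone. Writing $m,n,p$ for the degrees of $f,g,h$, I would first establish the graded right-symmetry of the associator of $i$:
\begin{align*}
i_f(i_g h) - i_{i_f g}\,h = (-1)^{mn}\big(i_g(i_f h) - i_{i_g f}\,h\big).
\end{align*}
This identity says that the product $a\circ b := i_b a$ (insert $b$ into the outer map $a$) is a graded right pre-Lie product, and $[~,~]_\mathrm{NR}$ is, up to the relabeling $f\leftrightarrow g$, the graded commutator of $\circ$. Granting the displayed identity, the graded Jacobi identity follows by the standard formal argument that the commutator of any graded right pre-Lie algebra is a graded Lie algebra: expanding the three cyclically arranged double brackets and substituting the associator relation, the singly-nested terms $i_{i_\bullet\bullet}\bullet$ cancel against one another while the doubly-nested terms $i_\bullet(i_\bullet\bullet)$ cancel by the right-symmetry.

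The main obstacle is the verification of the associator identity itself, which is an assertion about shuffle sums. In $i_f(i_g h)$ the block of arguments fed to $g$ may overlap the block fed to $f$ in two essentially different ways: either the slot created by $g$'s output is itself consumed by $f$, or it is not. Splitting the sum over shuffles of $\{1,\dots,m+n+p+1\}$ according to this dichotomy separates the nested contribution $i_{i_f g}h$ from a residual term in which $f$ and $g$ are inserted into disjoint blocks of $h$; the residual term is manifestly invariant under interchanging $f$ and $g$ up to the Koszul sign $(-1)^{mn}$, which is exactly the right-hand side of the identity. The delicate bookkeeping is the matching of the signs $(-1)^\sigma$ under this regrouping, for which I would use the standard factorization of a shuffle as a composite of smaller shuffles and track the Koszul sign each factorization contributes.

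For (ii), let $\mu\in\Hom(\wedge^2\mathfrak{g},\mathfrak{g})$; since $m=n=1$ gives $(-1)^{mn}=-1$, formula \eqref{nr-br} yields $[\mu,\mu]_\mathrm{NR} = i_\mu\mu - (-1)\,i_\mu\mu = 2\,i_\mu\mu$. Evaluating $i_\mu\mu$ on $x_1,x_2,x_3$ over the three $(2,1)$-shuffles and using the skew-symmetry of $\mu$ to normalize the middle term gives
\begin{align*}
(i_\mu\mu)(x_1,x_2,x_3) = \mu(\mu(x_1,x_2),x_3) + \mu(\mu(x_2,x_3),x_1) + \mu(\mu(x_3,x_1),x_2),
\end{align*}
which is precisely the Jacobiator of $\mu$. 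Hence $[\mu,\mu]_\mathrm{NR}$ vanishes identically if and only if $\mu$ satisfies the Jacobi identity, i.e. if and only if $\mu$ is a Lie bracket. Since $\mu$ has odd degree, the Maurer-Cartan equation in the graded Lie algebra of part (i) is exactly $[\mu,\mu]_\mathrm{NR}=0$, and this closes the equivalence.
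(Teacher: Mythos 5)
Your argument is correct, but there is nothing in the paper to compare it against line by line: the paper states this theorem as a classical result and simply cites \cite{nij-ric}, giving no proof of its own. Your route --- graded skew-symmetry read off directly from \eqref{nr-br}, the graded Jacobi identity reduced to the graded right-symmetry of the associator of the insertion operation, and then the standard fact that the graded commutator of a graded right pre-Lie product is a graded Lie bracket --- is the standard Nijenhuis--Richardson argument, and it is in fact the structure this paper silently relies on later: in the proof of the proposition on the cup-product matched pair the authors invoke exactly your key lemma, namely that $P \diamond Q := i_Q P$ is a graded right pre-Lie product on $\oplus_{n \geq 0} \mathrm{Hom}(\wedge^{n+1}\mathfrak{g},\mathfrak{g})$. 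Your part (ii) is also right: with $m=n=1$ one gets $[\mu,\mu]_\mathrm{NR} = 2\, i_\mu \mu$, the three $(2,1)$-shuffles together with skew-symmetry of $\mu$ produce precisely the Jacobiator, and since $\mu$ has degree $1$ the Maurer--Cartan equation in a graded Lie algebra (no differential) is just $[\mu,\mu]_\mathrm{NR}=0$, matching the convention used throughout the paper. The one place your write-up falls short of a complete proof is the verification of the associator identity itself: you correctly describe the decomposition of the shuffle sum for $i_f(i_g h)$ into terms where the output of $f$ feeds a slot of $g$ (reassembling to $i_{i_f g}h$) and terms where $f$ and $g$ occupy disjoint blocks of the inputs of $h$ (manifestly symmetric in $f,g$ up to $(-1)^{mn}$), but the matching of the signs $(-1)^\sigma$ under this regrouping is left as a plan rather than carried out; that bookkeeping is genuinely tedious, though standard, and goes through exactly as you sketch.
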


\begin{remark}
    Let $(\mathfrak{g}, [~,~]_\mathfrak{g})$ be a Lie algebra. Then the Chevalley-Eilenberg coboundary map $\delta_\mathrm{CE} : C^n (\mathfrak{g}; \mathfrak{g}) \rightarrow C^{n+1} (\mathfrak{g}; \mathfrak{g})$ of the Lie algebra $\mathfrak{g}$ with coefficients in the adjoint representation is simply given by
    $\delta_\mathrm{CE} (f) = - [\mu, f]_\mathrm{NR}$, for $f \in C^n (\mathfrak{g}; \mathfrak{g}) = \mathrm{Hom} (\wedge^n \mathfrak{g}, \mathfrak{g})$. Here $\mu \in \mathrm{Hom}(\wedge^2 \mathfrak{g}, \mathfrak{g})$ represents the Lie bracket of $\mathfrak{g}$.
\end{remark}

\medskip

%In the following, we revise $L_\infty$-algebras (also called strongly homotopy Lie algebras) and their Maurer-Cartan characterization.

In the following, we recall matched pairs of Lie algebras and the corresponding bicrossed product.

\begin{defn}\label{defn-mpl}
    A {\em matched pair of Lie algebras} is a quadruple $(\mathfrak{g}, \mathfrak{h}, \rho, \psi)$ in which $\mathfrak{g}, \mathfrak{h}$ are both Lie algebras together with bilinear maps $\rho : \mathfrak{g} \times \mathfrak{h} \rightarrow \mathfrak{h}$ and $\psi : \mathfrak{h} \times \mathfrak{g} \rightarrow \mathfrak{g}$ satisfying the following conditions:
    
\begin{itemize}
    \item[-] $\rho$ defines a representation of the Lie algebra $\mathfrak{g}$ on the vector space $\mathfrak{h}$ (i.e. $\rho_{[x, y]_\mathfrak{g}} h = \rho_x \rho_y h - \rho_y \rho_x h$),

    \item[-] $\psi$ defines a representation of the Lie algebra $\mathfrak{h}$ on the vector space $\mathfrak{g}$ (i.e. $\psi_{[h, k]_\mathfrak{h}} x = \psi_h \psi_k x - \psi_k \psi_h x$),

    \item[-] the maps $\rho$ and $\psi$ satisfy the compatibilities:
    \begin{align}
        \rho_x ([h, k]_\mathfrak{h}) =~& [\rho_x h , k]_\mathfrak{h} + [h, \rho_x k]_\mathfrak{h} + \rho_{\psi_k x} h - \rho_{\psi_h x} k, \label{11}\\
        \psi_h ([x, y]_\mathfrak{g}) =~& [\psi_h x , y]_\mathfrak{g} + [x, \psi_u y]_\mathfrak{g} + \psi_{\rho_y h} x - \psi_{\rho_x h} y, \label{22}
    \end{align}
    \end{itemize}
    for $x, y \in \mathfrak{g}$ and $h, k \in \mathfrak{h}$.
\end{defn}

\begin{exam}\label{lierep-exam}
Let $\mathfrak{g}$ be a Lie algebra and $(V, \rho)$ be a representation of it. If we equip $V$ with the trivial Lie bracket then the quadruple $(\mathfrak{g}, V, \rho, 0)$ is a matched pair of Lie algebras.
\end{exam}

\begin{exam}
    Let $\mathfrak{g}, \mathfrak{h}$ be two Lie algebras and $\mathfrak{g}$ acts on the Lie algebra $\mathfrak{h}$ by derivations. That is, there is a Lie algebra homomorphism $\rho : \mathfrak{g} \rightarrow \mathrm{Der} (\mathfrak{h})$. Then it is easy to see that the quadruple $(\mathfrak{g}, \mathfrak{h}, \rho, 0)$ is a matched pair of Lie algebras.
\end{exam}

\begin{thm}\label{thm-bicross}
    Let $(\mathfrak{g}, \mathfrak{h}, \rho, \psi)$ be a matched pair of Lie algebras. Then the direct sum $\mathfrak{g} \oplus \mathfrak{h}$ inherits a Lie algebra structure with the bracket
    \begin{align}\label{bicross}
        [ (x, h), (y, k)]_\Join := \big( [x, y]_\mathfrak{g} + \psi_h y - \psi_k x , [h, k]_\mathfrak{h} + \rho_x k - \rho_y h   \big), \text{ for } (x, h) , (y, k) \in \mathfrak{g} \oplus \mathfrak{h}.
    \end{align}
\end{thm}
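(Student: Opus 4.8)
The plan is to verify that $[~,~]_\Join$ is bilinear, skew-symmetric, and satisfies the Jacobi identity. Bilinearity is immediate from \eqref{bicross}, since each constituent ($[~,~]_\mathfrak{g}$, $[~,~]_\mathfrak{h}$, $\rho$ and $\psi$) is bilinear. For skew-symmetry I would compute $[(y,k),(x,h)]_\Join$ directly from \eqref{bicross} and use the skew-symmetry of $[~,~]_\mathfrak{g}$ and $[~,~]_\mathfrak{h}$ together with the obvious sign reversals in the $\psi$- and $\rho$-terms; this yields $[(y,k),(x,h)]_\Join = -[(x,h),(y,k)]_\Join$.

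The substance is the Jacobi identity. Rather than expanding the cyclic sum on three arbitrary elements, I would exploit that the Jacobiator
\[
J(U,V,W) := [[U,V]_\Join, W]_\Join + [[V,W]_\Join, U]_\Join + [[W,U]_\Join, V]_\Join
\]
is trilinear and, because $[~,~]_\Join$ is already known to be skew-symmetric, totally antisymmetric in $U,V,W$. Hence it suffices to verify $J(U,V,W)=0$ when each argument lies in one of the two summands $\mathfrak{g}\oplus 0$ or $0\oplus\mathfrak{h}$: any element of $\mathfrak{g}\oplus\mathfrak{h}$ is such a sum, and multilinearity expands $J$ into pure-argument triples, which antisymmetry lets us sort by type. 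Up to this sorting there remain exactly four cases, indexed by how many arguments come from $\mathfrak{g}$: the pure cases $(\mathfrak{g},\mathfrak{g},\mathfrak{g})$ and $(\mathfrak{h},\mathfrak{h},\mathfrak{h})$, and the mixed cases $(\mathfrak{g},\mathfrak{g},\mathfrak{h})$ and $(\mathfrak{g},\mathfrak{h},\mathfrak{h})$.

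The two pure cases are immediate: on $(\mathfrak{g},\mathfrak{g},\mathfrak{g})$ the bracket $[~,~]_\Join$ restricts to $[~,~]_\mathfrak{g}$, so $J$ vanishes by the Jacobi identity of $\mathfrak{g}$, and likewise $(\mathfrak{h},\mathfrak{h},\mathfrak{h})$ uses the Jacobi identity of $\mathfrak{h}$. For the mixed case $(\mathfrak{g},\mathfrak{g},\mathfrak{h})$, say on $\big((x,0),(y,0),(0,h)\big)$, I would compute the three iterated brackets using the cross bracket $[(x,0),(0,h)]_\Join = (-\psi_h x,\ \rho_x h)$ and split the result into its $\mathfrak{g}$- and $\mathfrak{h}$-components. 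The $\mathfrak{h}$-component collects exactly $\rho_{[x,y]_\mathfrak{g}} h - \rho_x\rho_y h + \rho_y\rho_x h$ and vanishes precisely because $\rho$ is a representation of $\mathfrak{g}$; the $\mathfrak{g}$-component collects $-\psi_h[x,y]_\mathfrak{g} + [\psi_h x, y]_\mathfrak{g} + [x,\psi_h y]_\mathfrak{g} + \psi_{\rho_y h} x - \psi_{\rho_x h} y$, which vanishes by compatibility \eqref{22}. The remaining case $(\mathfrak{g},\mathfrak{h},\mathfrak{h})$ is symmetric: its $\mathfrak{g}$-component vanishes because $\psi$ is a representation of $\mathfrak{h}$, and its $\mathfrak{h}$-component reproduces compatibility \eqref{11}.

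Thus each defining axiom of a matched pair is consumed by exactly one component of one case, and no further hypotheses are needed. The only real work lies in the bookkeeping for the two mixed cases, where one must track the several $\psi$- and $\rho$-terms produced by the cross brackets; the reduction to generators is what keeps this bounded, since it isolates each compatibility condition rather than forcing all of them to appear simultaneously in one large cancellation. An alternative would be to encode $[~,~]_\Join$ as an element $\mu_\Join \in \mathrm{Hom}(\wedge^2(\mathfrak{g}\oplus\mathfrak{h}),\, \mathfrak{g}\oplus\mathfrak{h})$ and check $[\mu_\Join,\mu_\Join]_\mathrm{NR}=0$ via the Nijenhuis–Richardson theorem, but the generator-wise verification is more transparent here.
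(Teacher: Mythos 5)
Your proof is correct, but it takes a different route from the one the paper actually relies on. The paper states Theorem \ref{thm-bicross} without a direct proof (it is a recalled result), and the argument it effectively develops for it is the Maurer--Cartan one: the bracket (\ref{bicross}) is exactly the element $\pi = (\mu \ltimes \rho, \psi \rtimes \nu) \in \mathrm{Hom}(\wedge^2(\mathfrak{g}\oplus\mathfrak{h}), \mathfrak{g}\oplus\mathfrak{h})$, Theorem \ref{mc-mpl} shows the matched pair axioms are equivalent to $[\pi,\pi]_\mathrm{NR}=0$, and the Nijenhuis--Richardson theorem then converts this into the Jacobi identity for $[~,~]_\Join$; the same scheme is what the paper uses to generalize the bicrossed product to $L_\infty$-algebras in Section \ref{sec6}. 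This is precisely the ``alternative'' you mention and set aside in your last sentence. Your generator-wise verification is sound: the Jacobiator of a skew-symmetric bracket is cyclic and changes sign under a transposition, hence totally antisymmetric, so reducing to the four pure/mixed types is legitimate, and your bookkeeping in the $(\mathfrak{g},\mathfrak{g},\mathfrak{h})$ case correctly isolates the representation condition for $\rho$ in the $\mathfrak{h}$-component and the compatibility (\ref{22}) in the $\mathfrak{g}$-component (dually for $(\mathfrak{g},\mathfrak{h},\mathfrak{h})$, using (\ref{11}) and that $\psi$ is a representation). What each approach buys: yours is elementary, self-contained, and makes visible which axiom is consumed by which component of which case; the paper's approach obtains the theorem as a one-line corollary of the Maurer--Cartan characterization and, more importantly, reuses that characterization to define the cohomology, the deformation theory, and the homotopy generalization, which a direct verification does not provide.
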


The Lie algebra constructed in the above theorem is called the {\em bicrossed product} and it is denoted by $\mathfrak{g} \Join \mathfrak{h}$.

\begin{remark}
    Let $\mathfrak{g}$ and $\mathfrak{h}$ be two Lie algebras. Suppose there is a Lie algebra structure on the direct sum $\mathfrak{g} \oplus \mathfrak{h}$ for which $\mathfrak{g}$ and $\mathfrak{h}$ are both Lie subalgebras. Then there exist Lie algebra representations $\rho : \mathfrak{g} \times \mathfrak{h} \rightarrow \mathfrak{h}$ and $\psi : \mathfrak{h} \times \mathfrak{g} \rightarrow \mathfrak{g}$ that makes the quadruple $(\mathfrak{g}, \mathfrak{h}, \rho, \psi)$ into a matched pair of Lie algebras. Explicitly, the maps $\rho$ and $\psi$ are respectively given by
    \begin{align*}
        \rho_x h := \text{pr}_2 [(x, 0), (0, h)] \quad \text{ and } \quad \psi_h x := \text{pr}_1 [(0, h), (x, 0)], \text{ for } x \in \mathfrak{g}, h \in \mathfrak{h}.
    \end{align*}
    Here $\text{pr}_1: \mathfrak{g} \oplus \mathfrak{h} \rightarrow \mathfrak{g}$ and $\text{pr}_2: \mathfrak{g} \oplus \mathfrak{h} \rightarrow \mathfrak{h}$ are the projections. Moreover, the Lie bracket on $\mathfrak{g} \oplus \mathfrak{h}$ can be written by the formula (\ref{bicross}).
\end{remark}

Let $(\mathfrak{g}, \mathfrak{h}, \rho, \psi)$ and $(\mathfrak{g}', \mathfrak{h}', \rho', \psi')$ be two matched pairs of Lie algebras.  A morphism from $(\mathfrak{g}, \mathfrak{h}, \rho, \psi)$ to $(\mathfrak{g}', \mathfrak{h}', \rho', \psi')$ is given by a pair $(f, g)$ of Lie algebra homomorphisms $f: \mathfrak{g} \rightarrow \mathfrak{g}'$ and $g : \mathfrak{h} \rightarrow \mathfrak{h}'$ satisfying
\begin{align}\label{mpl-mor}
    g (\rho_x h) = \rho'_{f(x)} g(h)    ~~~~ \text{ and } ~~~~  f ( \psi_h x) = \psi'_{g(h)} f(x), \text{ for } x \in \mathfrak{g}, h \in \mathfrak{h}.
\end{align}

It follows from (\ref{mpl-mor}) that the pair $(f,g)$ defines a morphism between matched pairs of Lie algebras if and only if the map
\begin{align*}
    f \Join g : \mathfrak{g} \Join \mathfrak{h} \rightarrow \mathfrak{g}' \Join \mathfrak{h}' ~~~ \text{ defined by } ~~~ (f \Join g)((x, h)) := (f(x), g(h)), \text{ for } (x, h) \in \mathfrak{g} \Join \mathfrak{h}
\end{align*}
is a homomorphism of Lie algebras. Thus, a morphism $(f, g)$ between matched pairs of Lie algebras may also be understood by the Lie algebra homomorphism $f \Join g: \mathfrak{g} \Join \mathfrak{h} \rightarrow \mathfrak{g}' \Join \mathfrak{h}'$ if no confusion arises.

\section{Representations of a matched pair of Lie algebras}\label{sec3}

In this section, we introduce
%we first recall the notion of a matched pair of Lie algebras and highlight some basic properties. Next, we define
representations of a matched pair of Lie algebras and give some examples. Finally, we define the semidirect product in the context of matched pairs of Lie algebras.

\begin{defn}\label{defn-mpl-rep}
    Let $(\mathfrak{g}, \mathfrak{h}, \rho, \psi)$ be a matched pair of Lie algebras. A {\em representation} of $(\mathfrak{g}, \mathfrak{h}, \rho, \psi)$ is given by a quadruple $(V, W, \alpha, \beta)$ in which

    \begin{itemize}
        \item $V$ is a representation for both the Lie algebras $\mathfrak{g}$ and $\mathfrak{h}$ (by the action maps $\rho_V : \mathfrak{g} \times V \rightarrow V$ and $\psi_V : \mathfrak{h} \times V \rightarrow V$, respectively),

        \item $W$ is a representation for both the Lie algebras $\mathfrak{g}$ and $\mathfrak{h}$ (by the action maps $\rho_W : \mathfrak{g} \times W \rightarrow W$ and $\psi_W : \mathfrak{h} \times W \rightarrow W$, respectively),

        \item $\alpha: V \times \mathfrak{h} \rightarrow W$ and $\beta: W \times \mathfrak{g} \rightarrow V$ are bilinear maps (called the pairing maps) satisfying the following set of identities:
        \begin{align}
            \alpha_{ (\rho_V)_x v} h =~& (\rho_W)_x \alpha_v h - \alpha_v \rho_x h, \label{1-iden}\\
            \beta_{ (\psi_W)_h w} x =~& (\psi_V)_h \beta_w x - \beta_w \psi_h x, \label{2-iden}\\
            (\rho_W)_x (\psi_W)_h w =~& (\psi_W)_{\rho_x h} w + (\psi_W)_h (\rho_W)_x w + \alpha_{\beta_w x} h - (\rho_W)_{\psi_h x} w, \label{3-iden}\\
            \alpha_v [h, k]_\mathfrak{h} =~& - (\psi_W)_k \alpha_v h + (\psi_W)_h \alpha_v k + \alpha_{ (\psi_V)_k v} h - \alpha_{ (\psi_V)_h v} k, \label{4-iden}\\ 
            (\psi_V)_h (\rho_V)_x v =~& (\rho_V)_{\psi_h x} + (\rho_V)_x (\psi_V)_h v + \beta_{\alpha_v h} x - (\psi_V)_{\rho_x h} v, \label{5-iden}\\
            \beta_w [x, y]_\mathfrak{g} =~& - (\rho_V)_y \beta_w x + (\rho_V)_x \beta_w y + \beta_{ (\rho_W)_y w} x - \beta_{ (\rho_W)_x w } y, \label{6-iden}
        \end{align}
    \end{itemize}
    for $x, y \in \mathfrak{g}$, $h, k \in \mathfrak{h}$, $v \in V$ and $w \in W$.
\end{defn}

\begin{exam} (Adjoint representation)
    Let $(\mathfrak{g}, \mathfrak{h}, \rho, \psi)$ be a matched pair of Lie algebras. Then the same quadruple $(\mathfrak{g}, \mathfrak{h}, \rho, \psi)$ can be realized as a representation, where the Lie algebra $\mathfrak{g}$ acts on the space $\mathfrak{g}$ (resp. $\mathfrak{h}$) by the adjoint action (resp. by the map $\rho$) and the Lie algebra $\mathfrak{h}$ acts on the space $\mathfrak{g}$  (resp. $\mathfrak{h}$) by the map $\psi$ (resp. by the adjoint action). We call this the adjoint representation.
\end{exam}

\begin{exam}(Representation induced by a morphism)
Let $(\mathfrak{g}, \mathfrak{h}, \rho, \psi)$ and $(\mathfrak{g}', \mathfrak{h}', \rho', \psi')$ be two matched pairs of Lie algebras, and $(f, g)$ be a morphism between them. Then the quadruple $(\mathfrak{g}', \mathfrak{h}', \alpha, \beta)$ is a representation of $(\mathfrak{g}, \mathfrak{h}, \rho, \psi)$, where the Lie algebra $\mathfrak{g}$ (resp. $\mathfrak{h}$) acts on the vector spaces $\mathfrak{g}'$ and $\mathfrak{h}'$ by the map $f$ (resp. $g$). The pairing maps $\alpha : \mathfrak{g}' \times \mathfrak{h} \rightarrow \mathfrak{h}'$ and $\beta : \mathfrak{h}' \times \mathfrak{g} \rightarrow \mathfrak{g}'$ are respectively given by
\begin{align*}
    \alpha_{x'} h := \rho'_{x'} g(h), ~~~ \beta_{h'} x := \psi'_{h'} f(x), \text{ for } x' \in \mathfrak{g}', h \in \mathfrak{h}, h' \in \mathfrak{h}', x \in \mathfrak{g}.
\end{align*}
\end{exam}

\begin{exam}(Representations of a LieRep pair) Let $\mathfrak{g}$ be a Lie algebra and $(V, \rho)$ be a representation of it.  In \cite{arnal,kuper} the authors called the pair $(\mathfrak{g}, (V, \rho))$ of a Lie algebra $\mathfrak{g}$ with a representation $(V, \rho)$ by the name of a LieRep pair. It follows from Example \ref{lierep-exam} that a LieRep pair $(\mathfrak{g}, (V, \rho))$ can be regarded as a matched pair of Lie algebras $(\mathfrak{g}, V, \rho, 0)$. In \cite{jiang} the authors have considered the representation of a LieRep pair. Precisely, a representation of a LieRep pair $(\mathfrak{g}, (V, \rho))$ is given by a triple $(\mathcal{V}, \mathcal{W}, \alpha)$ in which $\mathcal{V}, \mathcal{W}$ are both representations of the Lie algebra $\mathfrak{g}$ with the action maps $\rho_\mathcal{V}, \rho_\mathcal{W}$, and $\alpha : \mathcal{V} \times V \rightarrow \mathcal{W}$ is a bilinear map satisfying
\begin{align*}
    \alpha_{ (\rho_\mathcal{V})_x { \overline{u}}} v = (\rho_\mathcal{W})_x \alpha_{ \overline{u}} v - \alpha_{ \overline{u}} \rho_x v,
\end{align*}
for all $x \in \mathfrak{g}$, ${ \overline{u}} \in \mathcal{V}$ and $v \in V$. It follows that $(\mathcal{V}, \mathcal{W}, \alpha)$ is a representation of the LieRep pair $(\mathfrak{g}, (V, \rho))$ if and only if the quadruple $(\mathcal{V}, \mathcal{W}, \alpha, \beta = 0)$ is a representation of the matched pair of Lie algebras $(\mathfrak{g}, V, \rho, 0)$.
    \end{exam}

%\textcolor{red}{matched pair of group representation induced..}

%\textcolor{red}{Poisson group action}

In the following result, we show that a matched pair of Lie algebras and a representation gives rise to a new matched pair of Lie algebras.

\begin{thm}\label{semid-thm}
    (Semidirect product) Let $(\mathfrak{g}, \mathfrak{h}, \rho, \psi)$ be a matched pair of Lie algebras and $(V, W, \alpha, \beta)$ be a representation of it. Then the quadruple $(\mathfrak{g} \ltimes V, \mathfrak{h} \ltimes W, \rho \ltimes \alpha, \psi \ltimes \beta)$ is a matched pair of Lie algebras, where the maps
    \begin{align*}
        \rho \ltimes \alpha : (\mathfrak{g} \oplus V) \times (\mathfrak{h} \oplus W) \rightarrow \mathfrak{h} \oplus W ~~~~ \text{ and } ~~~~ \psi \ltimes \beta : (\mathfrak{h} \oplus W) \times (\mathfrak{g} \oplus V) \rightarrow \mathfrak{g} \oplus V
    \end{align*}
    are respectively given by
    \begin{align*}
        (\rho \ltimes \alpha)_{(x, v)} (h, w) := \big( \rho_x h , (\rho_W)_x w + \alpha_v h \big) ~~~~ \text{ and } ~~~~ (\psi \ltimes \beta)_{(h, w)} (x, u) := \big(  \psi_h x , (\psi_V)_h u + \beta_w x\big).
    \end{align*}
\end{thm}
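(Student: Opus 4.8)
The plan is to check directly that $(\mathfrak{g} \ltimes V, \mathfrak{h} \ltimes W, \rho \ltimes \alpha, \psi \ltimes \beta)$ satisfies the four requirements of Definition \ref{defn-mpl}. Recall first that the two semidirect products carry the brackets
\[
[(x,v),(y,u)]_{\mathfrak{g} \ltimes V} = \big( [x,y]_\mathfrak{g},\ (\rho_V)_x u - (\rho_V)_y v \big), \quad [(h,w),(k,w')]_{\mathfrak{h} \ltimes W} = \big( [h,k]_\mathfrak{h},\ (\psi_W)_h w' - (\psi_W)_k w \big).
\]
The single idea running through the whole verification is that, once the definitions of $\rho \ltimes \alpha$, $\psi \ltimes \beta$ and the two brackets above are substituted, each matched-pair axiom splits into a \emph{base} component valued in $\mathfrak{g}$ or $\mathfrak{h}$ and a \emph{module} component valued in $V$ or $W$. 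The base component is literally the corresponding axiom for $(\mathfrak{g}, \mathfrak{h}, \rho, \psi)$, hence holds by hypothesis, and the module component is an instance (or two) of the identities (\ref{1-iden})--(\ref{6-iden}) together with the fact that $V$ (resp. $W$) is a representation of the relevant Lie algebra.

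Concretely, I would first verify that $\rho \ltimes \alpha$ is a representation of $\mathfrak{g} \ltimes V$ on $\mathfrak{h} \oplus W$, i.e. that $(\rho \ltimes \alpha)_{[(x,v),(y,u)]}(h,w)$ equals the commutator $(\rho \ltimes \alpha)_{(x,v)}(\rho \ltimes \alpha)_{(y,u)}(h,w) - (\rho \ltimes \alpha)_{(y,u)}(\rho \ltimes \alpha)_{(x,v)}(h,w)$. Expanding both sides, the $\mathfrak{h}$-component reduces to $\rho_{[x,y]_\mathfrak{g}} h = \rho_x \rho_y h - \rho_y \rho_x h$ (the hypothesis that $\rho$ is a representation), while the $W$-component, after cancelling the $(\rho_W)_{[x,y]_\mathfrak{g}} w$ terms by means of $W$ being a $\mathfrak{g}$-representation, collapses to two applications of (\ref{1-iden}), one for the pair $(x,u)$ and one for $(y,v)$. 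The statement that $\psi \ltimes \beta$ is a representation of $\mathfrak{h} \ltimes W$ on $\mathfrak{g} \oplus V$ is proved symmetrically and reduces to $\psi$ being a representation, to $V$ being an $\mathfrak{h}$-representation, and to (\ref{2-iden}).

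For the compatibility (\ref{11}) of the new maps I substitute the definitions into $(\rho \ltimes \alpha)_{(x,v)}\big([(h,w),(k,w')]\big)$ and into the four terms on the right-hand side of (\ref{11}), and then compare components. The $\mathfrak{h}$-component is exactly identity (\ref{11}) for $(\mathfrak{g}, \mathfrak{h}, \rho, \psi)$. Grouping the $W$-component according to whether a term carries $w$, carries $w'$, or carries neither, the $w'$-terms (including $\alpha_{\beta_{w'}x} h$) vanish by (\ref{3-iden}), the $w$-terms (including $\alpha_{\beta_w x} k$) vanish by (\ref{3-iden}) with the roles of $h$ and $k$ exchanged, and the remaining $\alpha_v(\cdot)$ terms vanish by (\ref{4-iden}). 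The compatibility (\ref{22}) of the new maps is handled in the same fashion: its $\mathfrak{g}$-component is (\ref{22}), and its $V$-component reduces to (\ref{5-iden}) (used for both module arguments) and (\ref{6-iden}).

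The argument requires no idea beyond the definitions; the only genuine difficulty is bookkeeping. Each compatibility identity produces on the order of a dozen module-level terms, so one must be methodical about the signs coming from the skew-symmetric actions and from the two semidirect-product brackets, and must correctly pair each residual term with the representation identity against which it cancels. In effect the content of the theorem is precisely that the six identities (\ref{1-iden})--(\ref{6-iden}) of Definition \ref{defn-mpl-rep} are exactly the module-level shadows of the matched-pair axioms applied to the semidirect product.
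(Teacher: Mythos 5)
Your proposal is correct and follows essentially the same route as the paper's own proof: direct verification of the matched-pair axioms for the semidirect products, splitting each identity into its base component (which is the corresponding axiom for $(\mathfrak{g},\mathfrak{h},\rho,\psi)$) and its module component, with (\ref{1-iden}) and (\ref{2-iden}) handling the two representation conditions and (\ref{11}), (\ref{3-iden}), (\ref{4-iden}) (resp. (\ref{22}), (\ref{5-iden}), (\ref{6-iden})) handling the two compatibilities. The bookkeeping you describe, including which identity absorbs which group of terms, matches the paper's computation exactly.
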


\begin{proof}
    Note that $\mathfrak{g} \ltimes V$ and $\mathfrak{h} \ltimes W$ are both Lie algebras with the Lie brackets (both denoted by the same notation) are respectively given by
    \begin{align*}
        [(x, u), (y, v)] := \big( [x, y]_\mathfrak{g}, (\rho_V)_x v - (\rho_V)_y u \big) ~~~ \text{ and } ~~~ [(h, w), (k, w')] := \big( [h, k]_\mathfrak{h}, (\psi_W)_h w' - (\psi_W)_k w \big),
    \end{align*}
    for $(x, u), (y, v) \in \mathfrak{g} \oplus V$ and $(h, w) , (k, w') \in \mathfrak{h} \oplus W$. We first claim that the map $\rho \ltimes \alpha$ defines a representation of the Lie algebra $\mathfrak{g} \ltimes V$ on the space $\mathfrak{h} \oplus W$. To show this, we observe that
    \begin{align*}
        &(\rho \ltimes \alpha)_{(x, u)} (\rho \ltimes \alpha)_{(y, v)} (h, w) - (\rho \ltimes \alpha)_{(y, v)} (\rho \ltimes \alpha)_{(x, u)} (h, w) \\
        &= (\rho \ltimes \alpha)_{(x, u)} \big( \rho_y h, (\rho_W)_y w + \alpha_v h   \big)    - (\rho \ltimes \alpha)_{(y, v)} \big(  \rho_x h , (\rho_W)_x w + \alpha_u h  \big) \\
        &= \big( \rho_x \rho_y h , (\rho_W)_x (\rho_W)_y w + (\rho_W)_x \alpha_v h + \alpha_u \rho_y h  \big) - \big(  \rho_y \rho_x h , (\rho_W)_y (\rho_W)_x w + (\rho_W)_y \alpha_u h - \alpha_v \rho_x h \big) \\
        &= \big(  \rho_{[x, y]_\mathfrak{g}} h , (\rho_W)_{[x,y]_\mathfrak{g}} w + \alpha_{(\rho_V)_x v} h - \alpha_{(\rho_V)_y u} h \big) \quad (\text{using } (\ref{1-iden}))\\
        &= (\rho \ltimes \alpha)_{( [x, y]_\mathfrak{g}, (\rho_V)_x v - (\rho_V)_y u )} (h, w) \\
        &= (\rho \ltimes \alpha)_{[(x, u), (y, v)]} (h, w),
    \end{align*}
    for $(x, u), (y, v) \in \mathfrak{g} \oplus V$ and $(h, w) \in \mathfrak{h} \oplus W$. This proves our claim. Similarly, by using the identity (\ref{2-iden}), one can prove that
    \begin{align*}
        (\psi \ltimes \beta)_{(h, w)}  (\psi \ltimes \beta)_{(k, w')} (x, u) -  (\psi \ltimes \beta)_{(k, w')}  (\psi \ltimes \beta)_{(h, w)} (x, u) =  (\psi \ltimes \beta)_{[(h, w), (k, w')]} (x, u),
    \end{align*}
    for $(h, w), (k,w') \in \mathfrak{h} \oplus W$ and $(x, u) \in \mathfrak{g} \oplus V$. This shows that the map $\psi \ltimes \beta$ defines a representation of the Lie algebra $\mathfrak{h} \ltimes W$ on the space $\mathfrak{g} \oplus V$. Thus, we are only left to show the compatibility conditions. For any $(x, u) \in \mathfrak{g} \oplus V$ and $(h, w) , (k, w') \in \mathfrak{h} \oplus W$, we have
    \begin{align*}
       & [ (\rho \ltimes \alpha)_{(x, u)} (h, w), (k, w')] + [(h, w), (\rho \ltimes \alpha)_{(x, u)} (k, w')] \\
       & + (\rho \ltimes \alpha)_{ (\psi \ltimes \beta)_{(k, w')} (x, u)   } (h, w) - (\rho \ltimes \alpha)_{ (\psi \ltimes \beta)_{(h, w)} (x, u)   } (k, w') \\
       & = [ ( \rho_x h, (\rho_W)_x w + \alpha_u h  ) , (k, w')] + [ (h, w), ( \rho_x k, (\rho_W)_x w' + \alpha_u k) ] \\
       & \quad + (\rho \ltimes \alpha)_{ (\psi_k x , (\psi_V)_k u + \beta_{w'} x)} (h, w) - (\rho \ltimes \alpha)_{  (\psi_h x , (\psi_V)_h u + \beta_w x)} (k, w') \\
        &= \big( [\rho_x h, k]_\mathfrak{h} , (\psi_W)_{\rho_x h} w' - (\psi_W)_k (\psi_W)_x w - \psi_k \alpha_u h  \big) \\
        &  \quad  + \big(  [h, \rho_x k]_\mathfrak{h} , (\psi_W)_h (\rho_W)_x w' + (\psi_W) \alpha_u k - (\psi_W)_{\rho_x k} w \big) 
        + \big( \rho_{\psi_k x} h, (\rho_W)_{\psi_k x} w + \alpha_{(\psi_V)_k u} h + \alpha_{\beta_{w'} x} h  \big)  \\ &  \quad  - \big( \rho_{\psi_h x} k, (\rho_W)_{\psi_h x} w' + \alpha_{ (\psi_V)_h u} k + \alpha_{\beta_w x} k   \big) \\
       & = \big(  \rho_x [h, k]_\mathfrak{h}, (\rho_W)_x \psi_h w' - (\rho_W)_x \psi_k w + \alpha_u [h, k]_\mathfrak{h}  \big) \quad (\text{using } (\ref{11}), (\ref{3-iden}), (\ref{4-iden}))\\
        &= (\rho \ltimes \alpha)_{(x, u)} ([h, k]_\mathfrak{h} , \psi_h w' - \psi_k w) \\
       & = (\rho \ltimes \alpha)_{(x, u)} [(h, w), (k, w')].
    \end{align*}
    This proves the first compatibility condition. Similarly, by using the identities (\ref{22}), (\ref{5-iden}), (\ref{6-iden}), one can prove the second compatibility condition, namely,
    \begin{align*}
    [(\psi \ltimes \beta)_{(h, w)} (x, u) , &~(y, v)] + [(x, u), (\psi \ltimes \beta)_{(h, w)} (y, v)] + (\psi \ltimes \beta)_{(\rho \ltimes \alpha)_{(y, v)} (h, w)} (x, u) \\
    &-  (\psi \ltimes \beta)_{(\rho \ltimes \alpha)_{(x, u)} (h, w)} (y, v) = (\psi \ltimes \beta)_{(h,w)} [(x, u), (y, v)],
    \end{align*}
    for $(h, w) \in \mathfrak{h} \oplus W$ and $(x, u), (y, v) \in \mathfrak{g} \oplus V$. This completes the proof that $(\mathfrak{g} \ltimes V, \mathfrak{h} \ltimes W, \rho \ltimes \alpha, \psi \ltimes \beta)$ is a matched pair of Lie algebras.
\end{proof}

The next result shows that a representation of a matched pair of Lie algebras induces a representation of the corresponding bicrossed product Lie algebra.

\begin{thm}\label{vw-thm}
    Let $(\mathfrak{g}, \mathfrak{h}, \rho, \psi)$ be a matched pair of Lie algebras and $(V, W, \alpha, \beta)$ be a representation of it. Then the vector space $V \oplus W$ can be given a representation of the bicrossed product Lie algebra $\mathfrak{g} \Join \mathfrak{h}$ with the action map $\rho_\Join : (\mathfrak{g} \Join \mathfrak{h}) \times (V \oplus W) \rightarrow V \oplus W$ given by
    \begin{align*}
        (\rho_\Join)_{(x, h)} (v, w) := \big( (\rho_V)_x v + (\psi_V)_h v - \beta_w x , (\psi_W)_h w + (\rho_W)_x w - \alpha_v h \big).
    \end{align*}
\end{thm}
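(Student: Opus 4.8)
The goal is to check that $\rho_\Join$ satisfies the defining identity of a representation, namely
\[
(\rho_\Join)_{[(x,h),(y,k)]_\Join}(v,w) = (\rho_\Join)_{(x,h)}(\rho_\Join)_{(y,k)}(v,w) - (\rho_\Join)_{(y,k)}(\rho_\Join)_{(x,h)}(v,w)
\]
for all $(x,h),(y,k)\in\mathfrak{g}\Join\mathfrak{h}$ and $(v,w)\in V\oplus W$, where $[~,~]_\Join$ is the bicrossed product bracket (\ref{bicross}). A direct expansion is possible: writing out both sides componentwise, the $V$-component will force the two representation conditions for the $\mathfrak{g}$- and $\mathfrak{h}$-actions on $V$ together with (\ref{2-iden}), (\ref{5-iden}), (\ref{6-iden}), while the $W$-component will require the corresponding conditions on $W$ together with (\ref{1-iden}), (\ref{3-iden}), (\ref{4-iden}). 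However, I would prefer the following conceptual route, which avoids essentially all of this bookkeeping.

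By Theorem \ref{semid-thm}, the quadruple $(\mathfrak{g}\ltimes V,\ \mathfrak{h}\ltimes W,\ \rho\ltimes\alpha,\ \psi\ltimes\beta)$ is a matched pair of Lie algebras, so by Theorem \ref{thm-bicross} its bicrossed product $(\mathfrak{g}\ltimes V)\Join(\mathfrak{h}\ltimes W)$ is a Lie algebra on the space $(\mathfrak{g}\oplus V)\oplus(\mathfrak{h}\oplus W)$. The first step is to transport this bracket along the linear isomorphism
\[
(\mathfrak{g}\oplus V)\oplus(\mathfrak{h}\oplus W)\ \xrightarrow{\ \cong\ }\ (\mathfrak{g}\oplus\mathfrak{h})\oplus(V\oplus W),\qquad ((x,v),(h,w))\longmapsto ((x,h),(v,w)).
\]
Expanding the bicrossed product bracket of the two semidirect products by (\ref{bicross}) and regrouping the coordinates, I expect to find exactly
\[
\big[((x,h),(v,w)),((y,k),(v',w'))\big] = \big([(x,h),(y,k)]_\Join,\ (\rho_\Join)_{(x,h)}(v',w') - (\rho_\Join)_{(y,k)}(v,w)\big).
\]
In other words, the transported Lie algebra is precisely the semidirect product of $\mathfrak{g}\Join\mathfrak{h}$ with the abelian vector space $V\oplus W$ via the action $\rho_\Join$. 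This is a finite, organized computation: the $\mathfrak{g}$- and $\mathfrak{h}$-components reproduce the bracket (\ref{bicross}) of $\mathfrak{g}\Join\mathfrak{h}$, and the $V$- and $W$-components assemble, term by term, into the stated commutator of $\rho_\Join$, with the interaction terms $\beta_w x$ and $\alpha_v h$ coming from the cross terms of $\psi\ltimes\beta$ and $\rho\ltimes\alpha$.

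Granting this identification, the conclusion is immediate from the general principle that for a Lie algebra $\mathfrak{l}$, an abelian space $U$, and a bilinear map $\theta:\mathfrak{l}\times U\to U$, the bracket $[(\ell,u),(\ell',u')]=([\ell,\ell']_\mathfrak{l},\ \theta_\ell u'-\theta_{\ell'}u)$ satisfies the Jacobi identity if and only if $\theta$ is a representation: the $\mathfrak{l}$-component of the Jacobiator vanishes by the Jacobi identity of $\mathfrak{l}$, while collecting the $U$-component slot by slot produces exactly the operator $\theta_{[\ell,\ell']} - \theta_\ell\theta_{\ell'} + \theta_{\ell'}\theta_\ell$. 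Since $(\mathfrak{g}\ltimes V)\Join(\mathfrak{h}\ltimes W)$ is already a Lie algebra and $\mathfrak{g}\Join\mathfrak{h}$ is a Lie algebra, it follows that $\rho_\Join$ is a representation, as claimed. The main obstacle is the bracket identification in the second step: one must track carefully which summand each term lands in under the regrouping, since a sign or slot error in the cross terms would be easy to make; once the bracket is matched, no further appeal to the representation axioms (\ref{1-iden})--(\ref{6-iden}) is needed, as these were already consumed inside Theorem \ref{semid-thm}.
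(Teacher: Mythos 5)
Your proposal is correct and follows essentially the same route as the paper: the paper also invokes Theorem \ref{semid-thm} to form the semidirect product matched pair, takes its bicrossed product, and reads off $\rho_\Join$ as a representation after regrouping $(\mathfrak{g}\oplus V)\oplus(\mathfrak{h}\oplus W)$ as $(\mathfrak{g}\oplus\mathfrak{h})\oplus(V\oplus W)$. The only difference is cosmetic: you spell out the final "abelian semidirect product bracket is Lie iff the action is a representation" principle, which the paper leaves implicit in the phrase "it follows from the above expression."
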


\begin{proof}
Since $(V, W, \alpha, \beta)$ is a representation of the matched pair of Lie algebras $(\mathfrak{g}, \mathfrak{h}, \rho, \psi)$, by Theorem \ref{semid-thm}, one may construct the corresponding semidirect product matched pair of Lie algebras 
\begin{align*}
    (\mathfrak{g} \ltimes V, \mathfrak{h} \ltimes W, \rho \ltimes \alpha, \psi \ltimes \beta).
\end{align*}
The corresponding bicrossed product Lie algebra $(\mathfrak{g} \ltimes V) \Join (\mathfrak{h} \ltimes W)$ is given on the direct sum $\mathfrak{g} \oplus V \oplus \mathfrak{h} \oplus W$ and the bracket is given by
\begin{align*}
    \llbracket (x, u, h, w) , (y, v, k, w') \rrbracket_\Join  :=~& \big(  [(x, u), (y, v)] + (\psi \ltimes \beta)_{(h, w)} (y, v) - (\psi \ltimes \beta)_{(k, w')} (x, u) , \\
    & \qquad [(h, w), (k, w')] + (\rho \ltimes \alpha)_{(x, u)} (k, w') -  (\rho \ltimes \alpha)_{(y, v)} (h, w) \big) \\
    & \in (\mathfrak{g} \oplus V) \oplus (\mathfrak{h} \oplus W).
\end{align*}
By the identification $(\mathfrak{g} \oplus V) \oplus (\mathfrak{h} \oplus W) \longleftrightarrow (\mathfrak{g} \oplus \mathfrak{h}) \oplus (V \oplus W)$, one can observe that the above bracket can be equivalently written as
\begin{align*}
    \big(  [(x, h), (y, k)]_\Join, (\rho_\Join)_{(x, h)} (v, w') - (\rho_\Join)_{(y, k)} (u, w)   \big).
\end{align*}
It follows from the above expression that $\rho_\Join$ defines a representation of the Lie algebra $\mathfrak{g} \Join \mathfrak{h}$ on the vector space $V \oplus W$.
\end{proof}

\begin{remark}\label{remark-rep}
Let $(\mathfrak{g}, \mathfrak{h}, \rho, \psi)$ be a matched pair of Lie algebras. Consider the bicrossed product Lie algebra $\mathfrak{g} \Join \mathfrak{h}$. Let $V$ and $W$ be two vector spaces such that the direct sum $V \oplus W$ carries a representation of the Lie algebra $\mathfrak{g} \Join \mathfrak{h}$ (with the action map $\rho_\Join$) for which the following conditions are hold:

- $\rho_\Join$ restrict to representations of both the Lie algebras $\mathfrak{g}$ and $\mathfrak{h}$ on the vector space $V$,

- $\rho_\Join$ restrict to representations of both the Lie algebras $\mathfrak{g}$ and $\mathfrak{h}$ on the vector space $W$.

\noindent Then we may define bilinear maps $\alpha : V \times \mathfrak{h} \rightarrow W$ and $\beta : W \times \mathfrak{g} \rightarrow V$ by
\begin{align*}
    \alpha_v h := -\mathrm{pr}_2 \big(  (\rho_\Join)_{(0, h)} (v, 0)  \big) ~~~~ \text{ and } ~~~~ \beta_w x := - \mathrm{pr}_1 \big(  (\rho_\Join)_{(x, 0)} (0, w)   \big),
\end{align*}
for $v \in V$, $w \in W$, $x \in \mathfrak{g}$ and $h \in \mathfrak{h}$. Here $\mathrm{pr}_1 : V \oplus W \rightarrow V$ and $\mathrm{pr}_2 : V \oplus W \rightarrow W$ are the projections. Then $(V, W, \alpha , \beta)$ becomes a representation of the matched pair of Lie algebras $(\mathfrak{g}, \mathfrak{h}, \rho, \psi)$.
\end{remark}

\begin{remark}
In \cite{lu-weins} the authors have introduced the notion of a double Lie group to study dressing transformations in a Poisson-Lie group. A double Lie group is also known as a matched pair of Lie groups \cite{etingof, majid}. More precisely, a matched pair of Lie groups is a quadruple $(G, H, \varphi, \psi)$ in which $G, H$ are Lie groups and $\varphi: G \times H \rightarrow H$, $\psi: H \times G \rightarrow G$ are smooth Lie group actions satisfying additionally
\begin{align*}
    \varphi_g (h h') =  (\varphi_g h) ( \varphi_{ \psi_h g} h')  ~~ \text{ and } ~~ \psi_h (g g') = ( \psi_{\varphi_{g'} h} g) (\psi_h g'), \text{ for } g, g' \in G \text{ and } h, h' \in H.
\end{align*}
As a consequence, the cartesian product $G \times H$ inherits a Lie group structure  (called the bicrossed product, denoted by $G \Join H$) such that $G$ and $H$ are both closed Lie subgroups of $G \Join H$ and the map $G \times H \rightarrow G \Join H$ defined by $(g, h) \mapsto (g, 1_H) (1_G, h)$ is a diffeomorphism. The differentiation of a matched pair of Lie groups $(G, H, \varphi, \psi)$ yields a matched pair of Lie algebras $(\mathfrak{g}, \mathfrak{h}, \rho, \psi)$.

A representation of a matched pair of Lie groups $(G, H, \varphi, \psi)$ can be defined by a triple $(V \oplus W, V, W)$ where $V \oplus W$ carries a representation of the Lie group $G \Join H$ which restrict to representations of both the Lie groups $G$ and $H$ on both the vector spaces $V$ and $W$. Then by taking the differentiation of all Lie group representations, one obtains representations of the corresponding Lie algebras. Hence by Remark \ref{remark-rep}, we get a representation of the corresponding matched pair of Lie algebras $(\mathfrak{g}, \mathfrak{h}, \rho, \psi)$.
\end{remark}

Given a representation of a Lie algebra $\mathfrak{g}$, one can define its dual representation. More precisely, let $\mathfrak{g}$ be a Lie algebra and $V$ be a representation of it with the action map $\rho_V: \mathfrak{g} \times V \rightarrow V$. Then the dual vector space $V^*$ can also be given a representation of the Lie algebra $\mathfrak{g}$ by the action map
\begin{align*}
    \rho_{V^*} : \mathfrak{g} \times V^* \rightarrow V^* ~~~~ \text{ defined by } ~~ ((\rho_{V^*})_x q) (v) = - q ((\rho_V)_x v), \text{ for } x \in \mathfrak{g}, q \in V^*, v \in V.
\end{align*}

In the following result, we show that the above dual construction can be generalized to the context of matched pairs of Lie algebras.

\begin{prop}
    (Dual representation) Let $(\mathfrak{g}, \mathfrak{h}, \rho, \psi)$ be a matched pair of Lie algebras and $(V, W, \alpha, \beta)$ be a representation of it. Then $(W^*, V^*, \alpha^*, \beta^*)$ is also a representation of $(\mathfrak{g}, \mathfrak{h}, \rho, \psi)$, where the pairing maps $\alpha^* : W^* \times \mathfrak{h} \rightarrow V^*$ and $\beta^* : V^* \times \mathfrak{g} \rightarrow W^*$ are respectively given by
    \begin{align*}
        (\alpha^*_p h) (v) = - p (\alpha_v h) ~~~~ \text{ and } ~~~~ (\beta^*_q x) (w) = - q (\beta_w x), \text{ for } p \in W^*, h \in \mathfrak{h}, q \in V^*, x \in \mathfrak{g}.
    \end{align*}
\end{prop}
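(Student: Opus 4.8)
The plan is to check directly that $(W^*, V^*, \alpha^*, \beta^*)$ meets every requirement of Definition \ref{defn-mpl-rep}, with $W^*$ playing the role of the first slot (the ``$V$'') and $V^*$ the role of the second slot (the ``$W$''). The module structures come for free: applying the dual-representation construction recalled just above to each of the four maps $\rho_W, \psi_W, \rho_V, \psi_V$ makes $W^*$ a representation of both $\mathfrak{g}$ and $\mathfrak{h}$ (via $\rho_{W^*}, \psi_{W^*}$) and makes $V^*$ a representation of both $\mathfrak{g}$ and $\mathfrak{h}$ (via $\rho_{V^*}, \psi_{V^*}$). Hence the first two bullets of Definition \ref{defn-mpl-rep} hold automatically, and the whole task reduces to verifying the six pairing identities \eqref{1-iden}--\eqref{6-iden} for the quadruple $(W^*, V^*, \alpha^*, \beta^*)$, where $\alpha^* : W^* \times \mathfrak{h} \to V^*$ now plays the role of $\alpha$ and $\beta^* : V^* \times \mathfrak{g} \to W^*$ that of $\beta$.

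For each of these six identities, which is an equality of elements of $V^*$ or of $W^*$, I would pair both sides with an arbitrary test vector in $V$ or in $W$. The only inputs needed are the defining formulas $(\alpha^*_p h)(v) = -p(\alpha_v h)$ and $(\beta^*_q x)(w) = -q(\beta_w x)$, together with the dual-action rules $((\rho_{V^*})_x q)(v) = -q((\rho_V)_x v)$, $((\psi_{W^*})_h p)(w) = -p((\psi_W)_h w)$, and their obvious analogues. After the pairing, the outer covector factors out of every term, and the sign changes introduced by the dualizations combine so that, up to an overall sign, the resulting vector-valued identity is exactly one of \eqref{1-iden}--\eqref{6-iden}. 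Tracking the correspondence, verifying \eqref{1-iden} and \eqref{4-iden} for the dual quadruple uses the original \eqref{1-iden} and \eqref{4-iden}, verifying \eqref{2-iden} and \eqref{6-iden} uses the original \eqref{2-iden} and \eqref{6-iden}, while the two mixed compatibilities interchange: \eqref{3-iden} for the dual follows from \eqref{5-iden}, and \eqref{5-iden} for the dual follows from \eqref{3-iden}. Since \eqref{1-iden}--\eqref{6-iden} hold for $(V, W, \alpha, \beta)$, all six hold for $(W^*, V^*, \alpha^*, \beta^*)$, proving the claim.

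There is no real conceptual difficulty here; the entire argument is bookkeeping, and the main thing to get right is the role reversal together with the signs. Concretely, one must keep straight that the first slot is $W^*$ and the second is $V^*$, that $\alpha^*$ takes values in $V^*$ while $\beta^*$ takes values in $W^*$, and that each application of a dual action or of $\alpha^*, \beta^*$ contributes one minus sign. Once these are tracked carefully the reductions above are immediate, and beyond establishing the dual module structures the six reductions use nothing past \eqref{1-iden}--\eqref{6-iden}.

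Finally, I note a conceptual alternative. By Theorem \ref{vw-thm} the representation $(V, W, \alpha, \beta)$ yields a representation $\rho_\Join$ of $\mathfrak{g} \Join \mathfrak{h}$ on $V \oplus W$, whose dual is a representation of $\mathfrak{g} \Join \mathfrak{h}$ on $(V \oplus W)^* \cong V^* \oplus W^*$. A short computation of this dual action shows that, after interchanging the two summands to write it on $W^* \oplus V^*$, it is precisely the bicrossed-product action that Theorem \ref{vw-thm} attaches to $(W^*, V^*, \alpha^*, \beta^*)$, and its off-diagonal components recover $\alpha^*$ and $\beta^*$ through the formulas of Remark \ref{remark-rep}. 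Reading Theorem \ref{vw-thm} and Remark \ref{remark-rep} together then identifies $(W^*, V^*, \alpha^*, \beta^*)$ as a representation of the matched pair. I would nonetheless present the direct verification as the primary argument, since it exhibits each of the six identities explicitly.
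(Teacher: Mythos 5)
Your proposal is correct, but your primary argument is genuinely different from the paper's proof. The paper proves this proposition purely conceptually, by exactly the route you relegate to your final paragraph: it invokes Theorem \ref{vw-thm} to get the action $\rho_\Join$ of $\mathfrak{g} \Join \mathfrak{h}$ on $V \oplus W$, dualizes to get an action on $(V \oplus W)^* = V^* \oplus W^*$, identifies this with $W^* \oplus V^*$, and writes out the dual action as
$(\rho^*_\Join)_{(x,h)}(p,q) = \big((\rho_{W^*})_x p + (\psi_{W^*})_h p - \beta^*_q x,\ (\psi_{V^*})_h q + (\rho_{V^*})_x q - \alpha^*_p h\big)$,
observing that this is precisely the bicrossed-product form attached to the quadruple $(W^*, V^*, \alpha^*, \beta^*)$ (so that, in the spirit of Remark \ref{remark-rep}, the quadruple is a representation). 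That argument is short and reuses established machinery, but it leaves implicit the converse passage from ``the $\mathfrak{g} \Join \mathfrak{h}$-action has the right shape'' back to the six identities of Definition \ref{defn-mpl-rep}. Your direct verification is more elementary and self-contained: it proves the six identities \eqref{1-iden}--\eqref{6-iden} for $(W^*, V^*, \alpha^*, \beta^*)$ one by one by pairing with test vectors, and your bookkeeping of the correspondence is accurate — in particular the nontrivial point that the two mixed compatibilities interchange, so that \eqref{3-iden} for the dual quadruple is equivalent to \eqref{5-iden} for the original and vice versa, while \eqref{1-iden}, \eqref{2-iden}, \eqref{4-iden}, \eqref{6-iden} each dualize to themselves (I checked several of these pairings and they come out exactly as you claim). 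The cost of your route is length if written in full; its benefit is that nothing is left implicit and the role reversal $(V, W) \mapsto (W^*, V^*)$ is made completely explicit.
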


\begin{proof}
    Since $(V, W, \alpha, \beta)$ is a representation of the matched pair of Lie algebras $(\mathfrak{g}, \mathfrak{h}, \rho, \psi)$, it follows that the direct sum $V \oplus W$ can be given a representation of the bicrossed product Lie algebra $\mathfrak{g} \Join \mathfrak{h}$ with the action map $\rho_\Join$ (see Theorem \ref{vw-thm}). By dualizing this representation, we have that the vector space $(V \oplus W)^* = V^* \oplus W^*$ can be given a representation of the Lie algebra $\mathfrak{g} \Join \mathfrak{h}$. By identifying $V^* \oplus W^* \cong W^* \oplus V^*$, this dual representation is simply given by
    \begin{align*}
        (\rho^*_\Join)_{(x, h)} (p, q) = \big(  (\rho_{W^*})_x p + (\psi_{W^*})_h p - \beta^*_q x , (\psi_{V^*})_h q + (\rho_{V^*})_x q - \alpha^*_p h  \big),
    \end{align*}
    for $(x, h) \in \mathfrak{g} \Join \mathfrak{h}$ and $(p, q) \in W^* \oplus V^*$. This expression precisely shows that $(W^*, V^*, \alpha^*, \beta^*)$ is a representation of the matched pair of Lie algebras $(\mathfrak{g}, \mathfrak{h}, \rho, \psi)$.
\end{proof}

\begin{exam}(Coadjoint representation) 
   Let $(\mathfrak{g}, \mathfrak{h}, \rho, \psi)$ be a matched pair of Lie algebras. Then $(\mathfrak{h}^*, \mathfrak{g}^*, \alpha, \beta)$ is a representation, where the pairing maps $\alpha : \mathfrak{h}^* \times \mathfrak{h} \rightarrow \mathfrak{g}^*$ and $\beta : \mathfrak{g}^* \times \mathfrak{g} \rightarrow \mathfrak{h}^*$ are respectively given by
   \begin{align*}
       (\alpha_p h) (x) := - p (\rho_x h) ~~~ \text{ and } ~~~ (\beta_q x) (h) := - q (\psi_h x), \text{ for } p \in \mathfrak{h}^*, h \in \mathfrak{h}, q \in \mathfrak{g}^*, x \in \mathfrak{g}.
   \end{align*}
\end{exam}

\section{Cohomology of a matched pair of Lie algebras}\label{sec4}
In this section, we aim to define the cohomology of a matched pair of Lie algebras. Given two vector spaces $\mathfrak{g}$ and $\mathfrak{h}$ (not necessarily equipped with any additional structures), we first construct a graded Lie algebra whose Maurer-Cartan elements correspond to matched pairs of Lie algebra structures on the pair of vector spaces $(\mathfrak{g}, \mathfrak{h})$. This characterization allows us to define the cohomology of a given matched pair of Lie algebras with coefficients in the adjoint representation. We show that there is a morphism from the cohomology of a matched pair of Lie algebras to the Chevalley-Eilenberg cohomology of the bicrossed product Lie algebra. Moreover, there is a morphism from the cohomology of a Lie bialgebra to the cohomology of the corresponding matched pair of Lie algebras. Finally, we define the cohomology of a matched pair of Lie algebras with coefficients in an arbitrary representation. %Finally, as an application of our cohomology, we study formal and finite-order deformations.

\subsection{Maurer-Cartan characterization and cohomology of a matched pair of Lie algebras}

Let $\mathfrak{g}$ and $\mathfrak{h}$ be two vector spaces. Consider the graded Lie algebra
\begin{align*}
    \mathcal{G} = \big( \oplus_{n \geq 0} \mathrm{Hom} (\wedge^{n+1} (\mathfrak{g} \oplus \mathfrak{h}), \mathfrak{g} \oplus \mathfrak{h}), [~,~]_\mathrm{NR} \big)
\end{align*}
on the space of all skew-symmetric multilinear maps on the vector space $\mathfrak{g} \oplus \mathfrak{h}$ with the Nijenhuis-Richardson bracket. Note that there is an isomorphism $\wedge^{n+1} (\mathfrak{g} \oplus \mathfrak{h}) \cong \oplus_{\substack{k+l = n+1\\k, l \geq 0}} (\wedge^k \mathfrak{g} \otimes \wedge^l \mathfrak{h})$.

A map $f \in  \mathrm{Hom} (\wedge^{n+1} (\mathfrak{g} \oplus \mathfrak{h}), \mathfrak{g} \oplus \mathfrak{h})$ is said to have {\em bidegree} $k|l$ with $k+l = n+1$ and $k, l \geq -1$ if it satisfies
\begin{align*}
    f (\wedge^{k+1} \mathfrak{g} \otimes \wedge^l  \mathfrak{h}) \subset \mathfrak{g}, \quad f (\wedge^k  \mathfrak{g} \otimes \wedge^{l+1}  \mathfrak{h}) \subset \mathfrak{h} ~~~ \text{ and } ~~ f = 0 \text{ otherwise}.
\end{align*}
We denote the set of all maps of bidegree $k|l$ by $C^{k|l} ( \mathfrak{g} \oplus  \mathfrak{h};  \mathfrak{g} \oplus  \mathfrak{h})$ or simply by $C^{k|l}$. It follows that
\begin{align*}
    C^{k|l} \cong \mathrm{Hom} (\wedge^{k+1}  \mathfrak{g} \otimes \wedge^l \mathfrak{h},  \mathfrak{g}) \oplus \mathrm{Hom} (\wedge^{k}  \mathfrak{g} \otimes \wedge^{l+1} \mathfrak{h},  \mathfrak{h}).
\end{align*}
Moreover, we have $\mathcal{G}^n = \mathrm{Hom} (\wedge^{n+1} (  \mathfrak{g} \oplus \mathfrak{h}),  \mathfrak{g} \oplus \mathfrak{h} ) \cong C^{n+1|-1} \oplus C^{n|0} \oplus \cdots \oplus C^{0|n} \oplus C^{-1|n+1}$.

\begin{lemma}\label{lemma-g}
    Let $f \in C^{k_f | l_f}$ and $g \in C^{k_g | l_g}$. Then we have
    \begin{align*}
        [f, g]_\mathrm{NR} \in C^{k_f + k_g | l_f + l_g}.
    \end{align*}
    It follows that $\oplus_{n \geq 0} C^{n|0}$ and $\oplus_{n \geq 0} C^{0|n}$ are both graded Lie subalgebras of $\mathcal{G}$.
\end{lemma}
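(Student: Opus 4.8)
The plan is to reduce everything to a bidegree bookkeeping for the insertion operation $i_f g$ and then obtain the bracket and the two subalgebra statements for free. Recall from $(\ref{nr-br})$ that $[f,g]_\mathrm{NR} = i_f g - (-1)^{m_f m_g}\, i_g f$, where $m_f = k_f + l_f$ and $m_g = k_g + l_g$ are the respective degrees in $\mathcal{G}$ (so $f$ and $g$ are multilinear in $m_f+1$ and $m_g+1$ arguments). Since the target bidegree $(k_f+k_g)|(l_f+l_g)$ is symmetric in the pair $(f,g)$, it suffices to prove the single claim that $i_f g \in C^{k_f+k_g\,|\,l_f+l_g}$; applying it with the roles of $f$ and $g$ interchanged gives $i_g f \in C^{k_g+k_f\,|\,l_g+l_f} = C^{k_f+k_g\,|\,l_f+l_g}$, and hence the difference $[f,g]_\mathrm{NR}$ lies in the same space.

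To prove $i_f g \in C^{k_f+k_g\,|\,l_f+l_g}$, I would evaluate $i_f g$ on a homogeneous input consisting of $a$ arguments from $\mathfrak{g}$ and $b$ arguments from $\mathfrak{h}$ with $a+b = m_f+m_g+1$, and trace the bidegree constraints through the defining shuffle sum. For each shuffle $\sigma \in \mathrm{Sh}(m_f+1, m_g)$, the first block of $m_f+1$ arguments fed into $f$ contains some $p$ elements of $\mathfrak{g}$ and $q$ elements of $\mathfrak{h}$ with $p+q = m_f+1$. Because $f$ has bidegree $k_f|l_f$, the term vanishes unless $(p,q) = (k_f+1, l_f)$, in which case $f$ outputs into $\mathfrak{g}$, or $(p,q) = (k_f, l_f+1)$, in which case $f$ outputs into $\mathfrak{h}$. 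In either case the output of $f$ becomes the first argument of $g$, so $g$ receives exactly $a-k_f$ elements of $\mathfrak{g}$ and $b-l_f$ elements of $\mathfrak{h}$ (the one slot consumed is compensated by the inserted value of $f$). Invoking the bidegree $k_g|l_g$ of $g$, the term survives only when $(a-k_f, b-l_f)$ equals $(k_g+1, l_g)$, forcing $a = k_f+k_g+1$, $b=l_f+l_g$ with output in $\mathfrak{g}$, or $(k_g, l_g+1)$, forcing $a = k_f+k_g$, $b=l_f+l_g+1$ with output in $\mathfrak{h}$. These are precisely the two nonzero patterns demanded by the bidegree $(k_f+k_g)|(l_f+l_g)$, and $i_f g$ vanishes on every other homogeneous input; combined with the skew-symmetry and multilinearity inherited from the Nijenhuis--Richardson construction, this yields $i_f g \in C^{k_f+k_g\,|\,l_f+l_g}$.

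Finally, the subalgebra assertions follow at once. If $f \in C^{k_f|0}$ and $g \in C^{k_g|0}$, then $[f,g]_\mathrm{NR} \in C^{k_f+k_g|0}$, so $\oplus_{n \geq 0} C^{n|0}$ is closed under the bracket and is a graded Lie subalgebra of $\mathcal{G}$; the identical reasoning with the first bidegree slot frozen at $0$ handles $\oplus_{n \geq 0} C^{0|n}$.

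I would expect the only delicate point to be the accounting in the insertion step, namely that feeding the output of $f$ into $g$ removes $m_f+1$ argument slots but returns one, so that the net consumption of $\mathfrak{g}$- and $\mathfrak{h}$-slots is exactly $k_f|l_f$ and the two bidegrees add. Keeping the two subcases (output of $f$ in $\mathfrak{g}$ versus $\mathfrak{h}$) separate is what makes the count transparent, and neither case presents any genuine difficulty beyond careful bookkeeping.
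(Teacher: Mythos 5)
Your proof is correct. The paper states Lemma \ref{lemma-g} without proof, treating it as a routine verification, and your argument supplies exactly the intended bookkeeping: the key observation that in either output case of $f$ the block handed to $g$ consists of $a-k_f$ arguments from $\mathfrak{g}$ and $b-l_f$ from $\mathfrak{h}$ (the slot consumed being restored by the inserted value) is the whole content, and your two surviving patterns $(a,b)=(k_f+k_g+1,\,l_f+l_g)$ with output in $\mathfrak{g}$ and $(a,b)=(k_f+k_g,\,l_f+l_g+1)$ with output in $\mathfrak{h}$ match the definition of $C^{k_f+k_g\,|\,l_f+l_g}$ precisely; the reduction to $i_fg$ by symmetry and the two subalgebra statements then follow as you say.
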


In the following, we construct another graded Lie subalgebra of $\mathcal{G}$ that plays a fundamental role in the study of matched pairs of Lie algebras. For each $n \geq 0$, we define
\begin{align*}
    \mathcal{M}^n := C^{n|0} \oplus C^{n-1|1} \oplus \cdots \oplus C^{1 | n-1} \oplus C^{0 | n} \subset \mathcal{G}^n = \mathrm{Hom} ( \wedge^{n+1} (  \mathfrak{g} \oplus \mathfrak{h}),  \mathfrak{g} \oplus \mathfrak{h} ).
\end{align*}

\begin{prop}\label{prop-gla}
    With the above notations, the graded space $\oplus_{n \geq 0} \mathcal{M}^n$ is a graded Lie subalgebra of $\mathcal{G}$.
\end{prop}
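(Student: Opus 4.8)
The plan is to deduce the proposition directly from Lemma~\ref{lemma-g}, the only substantive ingredient being an elementary bookkeeping fact about which bidegrees occur in $\oplus_{n \geq 0} \mathcal{M}^n$. First I would unwind the definition: since $\mathcal{M}^n = C^{n|0} \oplus C^{n-1|1} \oplus \cdots \oplus C^{0|n}$ gathers exactly those $C^{k|l}$ with $k + l = n$ and $k, l \geq 0$, summing over $n$ gives the clean description $\oplus_{n \geq 0} \mathcal{M}^n = \oplus_{k, l \geq 0} C^{k|l}$. Because each $C^{k|l}$ with $k + l = n$ sits inside $\mathcal{G}^n$, this exhibits $\oplus_{n \geq 0} \mathcal{M}^n$ as a graded subspace of $\mathcal{G}$ with $\mathcal{M}^n \subset \mathcal{G}^n$. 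Thus the entire content of the proposition reduces to closure under the Nijenhuis--Richardson bracket.

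For closure it suffices, by bilinearity of $[~,~]_\mathrm{NR}$, to treat homogeneous elements. Given $f \in C^{k_f | l_f}$ and $g \in C^{k_g | l_g}$ with $k_f, l_f, k_g, l_g \geq 0$, Lemma~\ref{lemma-g} yields $[f, g]_\mathrm{NR} \in C^{k_f + k_g | l_f + l_g}$. The decisive point is that the index set $\{(k, l) : k, l \geq 0\}$ is a sub-semigroup of $\Z \times \Z$ under coordinatewise addition: since $k_f + k_g \geq 0$ and $l_f + l_g \geq 0$, the bracket again lands in $\oplus_{k, l \geq 0} C^{k|l} = \oplus_{n \geq 0} \mathcal{M}^n$. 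Moreover, writing $n_f = k_f + l_f$ and $n_g = k_g + l_g$, the same inclusion gives $[\mathcal{M}^{n_f}, \mathcal{M}^{n_g}]_\mathrm{NR} \subset \mathcal{M}^{n_f + n_g}$, so the bracket respects the grading and $\oplus_{n \geq 0} \mathcal{M}^n$ is genuinely a graded Lie subalgebra.

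I expect no real obstacle here, since Lemma~\ref{lemma-g} already performs the delicate task of tracking bidegrees through the bracket; all that remains is the observation that the nonnegative quadrant is closed under addition. The one conceptual remark worth making explicit is why $\mathcal{M}$ is the right object: the full decomposition of $\mathcal{G}$ allows bidegrees with $k, l \geq -1$, and the larger index set $\{k, l \geq -1\}$ fails to be a sub-semigroup (for example $(-1, 0) + (-1, 0) = (-2, 0)$ escapes it). Restricting to $k, l \geq 0$ is precisely the condition that turns Lemma~\ref{lemma-g} into a subalgebra statement, which is the heart of the matter.
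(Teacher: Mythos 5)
Your proof is correct and follows essentially the same route as the paper: decompose into homogeneous bidegree components, apply Lemma~\ref{lemma-g}, and observe that bidegrees with $k, l \geq 0$ are closed under addition, so the bracket of $\mathcal{M}^m$ and $\mathcal{M}^n$ lands in $\mathcal{M}^{m+n}$. Your closing remark on why the index set $\{k, l \geq -1\}$ fails to be a sub-semigroup is a nice addition not in the paper, but the core argument is identical.
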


\begin{proof}
    Let $f \in \mathcal{M}^m$ and $g \in \mathcal{M}^n$. If $f \in C^{i|m-i}$ and $g \in C^{j|n-j}$ (with $0 \leq i \leq m$ and $0\leq j \leq n$) then by Lemma \ref{lemma-g}, we have $[f, g]_\mathrm{NR} \in C^{i+j | m+n -i - j} \subset \mathcal{M}^{m+n}$. Hence the result follows.
\end{proof}

Next, let $\mathfrak{g}$ and $\mathfrak{h}$ be two vector spaces equipped with four bilinear maps 
\begin{align}\label{four-bi}
    \mu = [~,~]_\mathfrak{g} : \mathfrak{g} \times \mathfrak{g} \rightarrow \mathfrak{g}, \quad \nu = [~,~]_\mathfrak{h} : \mathfrak{h} \times \mathfrak{h} \rightarrow \mathfrak{h}, \quad \rho : \mathfrak{g} \times \mathfrak{h} \rightarrow \mathfrak{h} ~~~ \text{ and } ~~~ \psi: \mathfrak{h} \times \mathfrak{g} \rightarrow \mathfrak{g}
\end{align}
in which $\mu$ and $\nu$ are skew-symmetric. We define two elements $\mu \ltimes \rho \in C^{1|0}$ and $\psi \rtimes \nu \in C^{0|1}$ by
\begin{align*}
    (\mu \ltimes \rho) ((x, h) , (y, k)) := \big(  [x, y]_\mathfrak{g}, \rho_x k - \rho_y h  \big) ~~~ \text{ and } ~~~ (\psi \rtimes \nu) ((x, h), (y, k)) := \big( \psi_h y - \psi_k x  , [h, k]_\mathfrak{h}  \big),
\end{align*}
for $(x, h), (y, k) \in \mathfrak{g} \oplus \mathfrak{h}$. Hence we obtain an element
\begin{align*}
    \pi= ( \mu \ltimes \rho , \psi \rtimes \nu ) \in C^{1|0} \oplus C^{0|1} = \mathcal{M}^1.
\end{align*}
Conversely, any element of $\mathcal{M}^1$ corresponds to four bilinear maps like (\ref{four-bi}) in which $\mu$ and $\nu$ are skew-symmetric.

\begin{thm}\label{mc-mpl}
    With the above notations, the quadruple $( (\mathfrak{g}, \mu), (\mathfrak{h}, \nu), \rho, \psi)$ forms a matched pair of Lie algebras if and only if $\pi = (  \mu \ltimes \rho, \psi \rtimes \nu ) \in \mathcal{M}^1$ is a Maurer-Cartan element of the graded Lie algebra $( \oplus_{n \geq 0} \mathcal{M}^n, [~,~]_\mathrm{NR}).$
\end{thm}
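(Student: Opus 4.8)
The plan is to compute the self-bracket $[\pi, \pi]_\mathrm{NR}$ and to read off the matched pair axioms from its bidegree components. Since $\pi = (\mu \ltimes \rho, \psi \rtimes \nu)$ with $\mu \ltimes \rho \in C^{1|0}$ and $\psi \rtimes \nu \in C^{0|1}$, bilinearity of the Nijenhuis--Richardson bracket together with its graded symmetry on degree-one elements gives
\begin{align*}
[\pi, \pi]_\mathrm{NR} = [\mu \ltimes \rho, \mu \ltimes \rho]_\mathrm{NR} + 2\, [\mu \ltimes \rho, \psi \rtimes \nu]_\mathrm{NR} + [\psi \rtimes \nu, \psi \rtimes \nu]_\mathrm{NR}.
\end{align*}
By Lemma \ref{lemma-g} the three summands lie respectively in $C^{2|0}$, $C^{1|1}$ and $C^{0|2}$, which are distinct direct summands of $\mathcal{M}^2$. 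Hence the Maurer--Cartan equation $[\pi, \pi]_\mathrm{NR} = 0$ is equivalent to the simultaneous vanishing of all three pieces, and I would prove the theorem by matching, one summand at a time, the vanishing of each piece with a subset of the conditions of Definition \ref{defn-mpl}.

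For the pure pieces I would exploit the splitting $C^{2|0} \cong \mathrm{Hom}(\wedge^3 \mathfrak{g}, \mathfrak{g}) \oplus \mathrm{Hom}(\wedge^2 \mathfrak{g} \otimes \mathfrak{h}, \mathfrak{h})$. Evaluating $[\mu \ltimes \rho, \mu \ltimes \rho]_\mathrm{NR}$ on three elements of $\mathfrak{g}$ produces twice the Jacobiator of $\mu$, so the $\mathrm{Hom}(\wedge^3 \mathfrak{g}, \mathfrak{g})$ component vanishes exactly when $(\mathfrak{g}, \mu)$ is a Lie algebra; evaluating on two elements of $\mathfrak{g}$ and one of $\mathfrak{h}$ yields the defect $\rho_{[x,y]_\mathfrak{g}} h - \rho_x \rho_y h + \rho_y \rho_x h$, so the $\mathrm{Hom}(\wedge^2 \mathfrak{g} \otimes \mathfrak{h}, \mathfrak{h})$ component vanishes exactly when $\rho$ is a representation of $\mathfrak{g}$ on $\mathfrak{h}$. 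The $C^{0|2}$ piece is handled identically after interchanging the roles of $(\mathfrak{g}, \mu, \rho)$ and $(\mathfrak{h}, \nu, \psi)$, giving that $(\mathfrak{h}, \nu)$ is a Lie algebra and $\psi$ is a representation of $\mathfrak{h}$ on $\mathfrak{g}$. These four conditions are precisely the first two bullet points of Definition \ref{defn-mpl}.

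The remaining piece $2\,[\mu \ltimes \rho, \psi \rtimes \nu]_\mathrm{NR} \in C^{1|1} \cong \mathrm{Hom}(\wedge^2 \mathfrak{g} \otimes \mathfrak{h}, \mathfrak{g}) \oplus \mathrm{Hom}(\mathfrak{g} \otimes \wedge^2 \mathfrak{h}, \mathfrak{h})$ carries the genuine interaction between the two factors, and I expect this to be the main computational obstacle, since here both operators are of mixed type and the shuffle sums defining $i_{\mu \ltimes \rho}(\psi \rtimes \nu)$ and $i_{\psi \rtimes \nu}(\mu \ltimes \rho)$ each contribute several terms whose signs must be tracked carefully. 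My claim is that the $\mathrm{Hom}(\mathfrak{g} \otimes \wedge^2 \mathfrak{h}, \mathfrak{h})$ component reproduces exactly the compatibility (\ref{11}) and the $\mathrm{Hom}(\wedge^2 \mathfrak{g} \otimes \mathfrak{h}, \mathfrak{g})$ component reproduces exactly (\ref{22}), so that this piece vanishes if and only if the last bullet point of Definition \ref{defn-mpl} holds. Assembling the three equivalences then proves the theorem. As an independent sanity check on the final statement, I would note that $\pi$ is literally the bilinear map $(\ref{bicross})$ defining the bicrossed product on $\mathfrak{g} \oplus \mathfrak{h}$; the Nijenhuis--Richardson theorem identifies $[\pi, \pi]_\mathrm{NR} = 0$ with $\pi$ being a Lie bracket, and since $\mathfrak{g}$ and $\mathfrak{h}$ are visibly $\pi$-subalgebras, Theorem \ref{thm-bicross} and the remark following it recover the matched pair structure with the original data, confirming the equivalence.
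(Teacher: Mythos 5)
Your proposal is correct and takes essentially the same approach as the paper: both decompose $[\pi,\pi]_\mathrm{NR}$ into its $C^{2|0}$, $C^{1|1}$ and $C^{0|2}$ components and identify the vanishing of each with the axioms of Definition \ref{defn-mpl}, the only differences being that the paper cites \cite{arnal} for the two pure components (where you evaluate directly) and carries out in full the $C^{1|1}$ computation that you state as a claim — a claim whose bidegree matching of (\ref{11}) and (\ref{22}) agrees exactly with what that computation produces. Incidentally, your closing ``sanity check'' via Theorem \ref{thm-bicross} and the remark following it is in fact a complete alternative argument (modulo that remark, which the paper states without proof), whereas the paper's verification is self-contained at that point.
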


\begin{proof}
    We have
    \begin{align*}
        &[\pi, \pi]_\mathrm{NR} \\
        &= [ (  \mu \ltimes \rho , \psi \rtimes \nu), (  \mu \ltimes \rho , \psi \rtimes \nu ) ]_\mathrm{NR} \\
        &= \big( [ \mu \ltimes \rho,  \mu \ltimes \rho]_\mathrm{NR},  [\mu \ltimes \rho, \psi \rtimes \nu]_\mathrm{NR} +      [ \psi \rtimes \nu ,  \mu \ltimes \rho]_\mathrm{NR}, [\psi \rtimes \nu, \psi \rtimes \nu]_\mathrm{NR}   \big) \\
         &= \big( [ \mu \ltimes \rho,  \mu \ltimes \rho]_\mathrm{NR},  2 [\mu \ltimes \rho, \psi \rtimes \nu]_\mathrm{NR} , [\psi \rtimes \nu, \psi \rtimes \nu]_\mathrm{NR}   \big).
    \end{align*}
    Thus, $\pi$ is a Maurer-Cartan element of the graded Lie algebra $( \oplus_{n \geq 0} \mathcal{M}^n , [~,~]_\mathrm{NR})$ if and only if
    \begin{align*}
         [ \mu \ltimes \rho,  \mu \ltimes \rho]_\mathrm{NR} = 0, \quad  [\mu \ltimes \rho, \psi \rtimes \nu]_\mathrm{NR} = 0  ~~~ \text{ and } ~~~  [\psi \rtimes \nu, \psi \rtimes \nu]_\mathrm{NR}  = 0.
    \end{align*}
    It has been observed in \cite{arnal} that $[ \mu \ltimes \rho,  \mu \ltimes \rho]_\mathrm{NR} = 0$ if and only if $\mu = [~,~]_\mathfrak{g}$ defines a Lie algebra structure on $\mathfrak{g}$ and $\rho$ defines a representation of the Lie algebra $(\mathfrak{g}, [~,~]_\mathfrak{g})$ on the vector space $\mathfrak{h}$. Similarly, $[\psi \rtimes \nu, \psi \rtimes \nu]_\mathrm{NR}  = 0$ if and only if $\nu = [~,~]_\mathfrak{h}$ defines a Lie algebra structure on $\mathfrak{h}$ and $\psi$ defines a representation of the Lie algebra $(\mathfrak{h}, [~,~]_\mathfrak{h})$ on the vector space $\mathfrak{g}$. Finally, we observe that
    \begin{align*}
      &[\mu \ltimes \rho,  \psi \rtimes \nu]_\mathrm{NR}  ((x, h), (y, k), (z, k')) \\
      &= \big(  i_{(\mu \ltimes \rho)}  (\psi \rtimes \nu) + i_{(\psi \rtimes \nu)}  (\mu \ltimes \rho)  \big)  ((x, h), (y, k), (z, k'))  \\
&=  (\psi \rtimes \nu) \big(   (\mu \ltimes \rho) ( (x, h), (y, k)) , (z, k')     \big) 
       - (\psi \rtimes \nu) \big(   (\mu \ltimes \rho) ( (x, h), (z, k')) , (y, k)     \big) \\
    & \quad  + (\psi \rtimes \nu) \big(   (\mu \ltimes \rho) ( (y, k), (z, k')) , (x, h)     \big) + (\mu \ltimes \rho) \big(   (\psi \rtimes \nu) ( (x, h), (y, k)) , (z, k')     \big) \\
    & \quad - (\mu \ltimes \rho) \big(   (\psi \rtimes \nu) ( (x, h), (z, k')) , (y, k)     \big)
      + (\mu \ltimes \rho) \big(   (\psi \rtimes \nu) ( (y, k), (z, k')) , (x, h)     \big)  \\  
       &=  (\psi \rtimes \nu) \big(  ( [x, y]_\mathfrak{g}, \rho_x k - \rho_y h), (z, k')  \big) 
      - (\psi \rtimes \nu) \big(  ( [x, z]_\mathfrak{g}, \rho_x k' - \rho_z h), (y, k)  \big) \\ 
      & \quad + (\psi \rtimes \nu) \big(  ( [y, z]_\mathfrak{g}, \rho_y k' - \rho_z k), (x, h)  \big) 
        + (\mu \ltimes \rho) \big(   (  \psi_h y - \psi_k x, [h, k]_\mathfrak{h}), (z, k')  \big) \\
        & \quad - (\mu \ltimes \rho) \big(  ( \psi_h z - \psi_{k'} x, [h, k']_\mathfrak{h}) , (y, k) \big) + (\mu \ltimes \rho) \big(  ( \psi_k z - \psi_{k'} y, [k, k']_\mathfrak{h}), (x, h)  \big) \\
       &=  \big(  \psi_{\rho_x k} z - \psi_{\rho_y h} z - \psi_{k'} [x, y]_\mathfrak{g} , [\rho_x k, k']_\mathfrak{h} - [\rho_y h, k']_\mathfrak{h}  \big) \\
     &  \quad - \big(    \psi_{\rho_x k'} y - \psi_{\rho_z h} y - \psi_{k} [x, z]_\mathfrak{g} , [\rho_x k', k]_\mathfrak{h} - [\rho_z h, k]_\mathfrak{h}  \big) \\
     &  \quad + \big(   \psi_{\rho_y k'} x - \psi_{\rho_z k} x - \psi_{h} [y, z]_\mathfrak{g} , [\rho_y k', h]_\mathfrak{h} - [\rho_z k, h]_\mathfrak{h}  \big) \\
    &  \quad + \big(  [\psi_h y, z]_\mathfrak{g} - [\psi_k x, z]_\mathfrak{g}, \rho_{\psi_h y} k' - \rho_{\psi_k x} k' - \rho_z [h, k]_\mathfrak{h}  \big) \\
      &  \quad - \big(  [\psi_h z, y]_\mathfrak{g} - [\psi_{k'} x, y]_\mathfrak{g}, \rho_{\psi_h z} k - \rho_{\psi_{k'} x} k - \rho_y [h, k']_\mathfrak{h}  \big) \\
     &  \quad  + \big(  [\psi_k z, x]_\mathfrak{g} - [\psi_{k'} y, x]_\mathfrak{g}, \rho_{\psi_k z} h - \rho_{\psi_{k'} y} h - \rho_x [k, k']_\mathfrak{h}  \big),
    \end{align*}
    for $(x, h) , (y, k), (z, k') \in \mathfrak{g} \oplus \mathfrak{h}$. It follows from the above expression that $[\mu \ltimes \rho , \psi \rtimes \nu]_\mathrm{NR} = 0$ if and only if the maps $\rho$ and $\psi$ satisfy the two compatibility conditions (\ref{11}), (\ref{22}) of a matched pair of Lie algebras. Hence the result follows.
\end{proof}

The above result gives rise to the Maurer-Cartan characterization of a matched pair of Lie algebras. This characterization enables us to construct the cochain complex associated with a matched pair of Lie algebras. Let $(\mathfrak{g}, \mathfrak{h}, \rho, \psi)$ be a matched pair of Lie algebras. For each $n \geq 0$, we define the $n$-th cochain group as
\begin{align*}
    C^0 (\mathfrak{g}, \mathfrak{h}, \rho, \psi) := \mathfrak{g} \oplus \mathfrak{h} ~~~~ \text{ and } ~~~~ C^{n \geq 1} (\mathfrak{g}, \mathfrak{h}, \rho, \psi) := \mathcal{M}^{n-1} = C^{n-1|0} \oplus C^{n-2|1} \oplus \cdots \oplus C^{1|n-2} \oplus C^{0|n-1}.
\end{align*}
Thus, an element $F \in C^{n \geq 1} (\mathfrak{g} , \mathfrak{h}, \rho, \psi) = C^{n-1|0} \oplus \cdots \oplus C^{0|n-1}$ is given by an $n$-tuple
$F = (F_1, F_2 \ldots, F_n),$ where $F_r \in C^{n-r| r-1} = \mathrm{Hom} (\wedge^{n-r+1} \mathfrak{g} \otimes \wedge^{r-1} \mathfrak{h} , \mathfrak{g}) \oplus \mathrm{Hom} (\wedge^{n-r} \mathfrak{g} \otimes \wedge^{r} \mathfrak{h} , \mathfrak{h})$ for $r=1, \ldots, n$.
There is a map $\delta_\mathrm{MPL} : C^n (\mathfrak{g}, \mathfrak{h}, \rho, \psi) \rightarrow C^{n+1} (\mathfrak{g}, \mathfrak{h}, \rho, \psi)$ given by 
\begin{align}
\delta_\mathrm{MPL}((x, h))&(y, k) := - [(x,h), (y, k)]_\Join, \text{ for } (x, h), (y, k) \in \mathfrak{g} \oplus \mathfrak{h}, \nonumber \\
  &  \delta_\mathrm{MPL} (F) := - [\pi, F], \text{ for } F \in C^n (\mathfrak{g} , \mathfrak{h}, \rho, \psi).\label{cob}
\end{align}
With the notation $F = (F_1, F_2 \ldots, F_n)$, the coboundary map (\ref{cob}) is given by
\begin{align}\label{exp-diff}
    \delta_\mathrm{MPL} (F) =~& - [(\mu \ltimes \rho,  \psi \rtimes \nu), (F_1, F_2 \ldots, F_n)]_\mathrm{NR} \nonumber \\
    =~&  \big( - [\mu \ltimes \rho, F_1]_\mathrm{NR}, \ldots, \underbrace{ - [\mu \ltimes \rho, F_r]_\mathrm{NR} - [\psi \rtimes \nu, F_{r-1}]_\mathrm{NR} }_{\in C^{n+1-r| r-1} ~~ (\text{at } r\text{-th place})}, \ldots, - [ \psi \rtimes \nu, F_n]_\mathrm{NR}  \big). 
\end{align}
We will give the explicit description of each components of (\ref{exp-diff}) in the next subsection when we discuss cohomology with coefficients in arbitrary representation.
Note that $\pi = (\mu \ltimes \rho, \psi \rtimes \nu) \in \mathcal{M}^1$ is a Maurer-Cartan element of the graded Lie algebra $(\oplus_{n \geq 0} \mathcal{M}^n, [~,~]_\mathrm{NR})$ which implies that $(\delta_\mathrm{MPL})^2 = 0$. In other words, $\{ C^\bullet (\mathfrak{g}, \mathfrak{h}, \rho, \psi), \delta_\mathrm{MPL} \}$ is a cochain complex.

\begin{defn}
    Let $(\mathfrak{g}, \mathfrak{h}, \rho, \psi)$ be a matched pair of Lie algebras. Then the cohomology groups of the cochain complex $\{ C^\bullet (\mathfrak{g}, \mathfrak{h}, \rho, \psi), \delta_\mathrm{MPL} \}$ are called the {\em cohomology} of the matched pair of Lie algebras $ (\mathfrak{g}, \mathfrak{h}, \rho, \psi)$. The corresponding cohomology groups are denoted by $H^\bullet_\mathrm{MPL} (\mathfrak{g}, \mathfrak{h}, \rho, \psi)$.
\end{defn}

Let $(\mathfrak{g}, \mathfrak{h}, \rho, \psi)$ be a matched pair of Lie algebras. Consider the  bicrossed product Lie algebra $\mathfrak{g} \Join \mathfrak{h} = (\mathfrak{g} \oplus \mathfrak{h}, [~,~]_\Join)$ given in Theorem \ref{thm-bicross}. For each $n \geq 0$, we consider the inclusion map 
\begin{align*}
\xymatrix{
   \Phi_n: C^n (\mathfrak{g}, \mathfrak{h}, \rho, \psi) \ar@{=}[d] \ar[r] & C^n (\mathfrak{g} \Join \mathfrak{h};  \mathfrak{g} \Join \mathfrak{h}) = \mathrm{Hom} (\wedge^n (\mathfrak{g} \oplus \mathfrak{h}), \mathfrak{g} \oplus \mathfrak{h}) \ar@{=}[d] \\
   C^{n-1|0} \oplus \cdots \oplus C^{0|n-1} \ar@{^{(}->}[r]  & C^{n|-1} \oplus C^{n-1|0} \oplus \cdots \oplus C^{0|n-1} \oplus C^{-1 | n}.
   }
\end{align*}
That is, $\Phi_n ((F_1, \ldots, F_n )) = (0, F_1, \ldots, F_n, 0)$, for $(F_1, \ldots, F_n) \in  C^n (\mathfrak{g}, \mathfrak{h}, \rho, \psi)$.

\begin{prop}
    The maps $\{ \Phi_n \}_{n \geq 0}$ defines a morphism of cochain complexes from $\{ C^\bullet (\mathfrak{g}, \mathfrak{h}, \rho, \psi), \delta_\mathrm{MPL} \}$ to the Chevalley-Eilenberg cochain complex $\{ C^\bullet (\mathfrak{g} \Join \mathfrak{h}; \mathfrak{g} \Join \mathfrak{h}), \delta_\mathrm{CE} \}$ of the bicrossed product Lie algebra $\mathfrak{g} \Join \mathfrak{h}$. Hence there is a morphism $H^\bullet_\mathrm{MPL} (\mathfrak{g}, \mathfrak{h}, \rho, \psi) \rightarrow H^\bullet_\mathrm{CE} (\mathfrak{g} \Join \mathfrak{h}; \mathfrak{g} \Join \mathfrak{h})$ between the corresponding cohomology groups. 
\end{prop}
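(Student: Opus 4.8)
The plan is to recognize both differentials as (minus) the adjoint action of a single Maurer-Cartan element, and to exploit the fact that $\oplus_n \mathcal{M}^n$ sits inside $\mathcal{G}$ as a graded Lie subalgebra (Proposition \ref{prop-gla}). By the Remark following the Nijenhuis-Richardson theorem, the Chevalley-Eilenberg differential of $\mathfrak{g} \Join \mathfrak{h}$ with adjoint coefficients is $\delta_\mathrm{CE}(f) = -[\pi_\Join, f]_\mathrm{NR}$, where $\pi_\Join \in \mathrm{Hom}(\wedge^2 (\mathfrak{g} \oplus \mathfrak{h}), \mathfrak{g} \oplus \mathfrak{h})$ is the element representing the bicrossed bracket $[~,~]_\Join$ of (\ref{bicross}). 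The first step is the key identification $\pi_\Join = \mu \ltimes \rho + \psi \rtimes \nu$, which follows by simply adding the defining formulas of $\mu \ltimes \rho$ and $\psi \rtimes \nu$ and comparing with (\ref{bicross}). Phrased in terms of the inclusion, this says that under $C^{1|0} \oplus C^{0|1} = \mathcal{M}^1 \hookrightarrow \mathcal{G}^1$ the Maurer-Cartan element $\pi = (\mu \ltimes \rho, \psi \rtimes \nu)$ maps exactly to $\pi_\Join$; equivalently $\Phi_2(\pi) = \pi_\Join$.

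Next I would observe that each $\Phi_n : C^n(\mathfrak{g}, \mathfrak{h}, \rho, \psi) = \mathcal{M}^{n-1} \hookrightarrow \mathcal{G}^{n-1} = C^n(\mathfrak{g} \Join \mathfrak{h}; \mathfrak{g} \Join \mathfrak{h})$ is, up to the degree shift, nothing but the inclusion of the graded Lie subalgebra $\oplus_n \mathcal{M}^n$ into $\mathcal{G}$. Since a subalgebra inclusion is a morphism of graded Lie algebras, for any $F \in \mathcal{M}^{n-1}$ we have
\begin{align*}
\Phi_{n+1}\big([\pi, F]_\mathrm{NR}\big) = [\Phi_2(\pi), \Phi_n(F)]_\mathrm{NR} = [\pi_\Join, \Phi_n(F)]_\mathrm{NR},
\end{align*}
where on the far left the bracket is computed inside $\mathcal{M}$ and is well defined there by Lemma \ref{lemma-g}. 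Combining this with the definitions $\delta_\mathrm{MPL}(F) = -[\pi, F]_\mathrm{NR}$ and $\delta_\mathrm{CE} = -[\pi_\Join, -]_\mathrm{NR}$ then yields
\begin{align*}
\Phi_{n+1}(\delta_\mathrm{MPL}(F)) = -\Phi_{n+1}([\pi, F]_\mathrm{NR}) = -[\pi_\Join, \Phi_n(F)]_\mathrm{NR} = \delta_\mathrm{CE}(\Phi_n(F)),
\end{align*}
which is precisely the cochain-map identity $\Phi_{n+1} \circ \delta_\mathrm{MPL} = \delta_\mathrm{CE} \circ \Phi_n$. For the base case $n = 0$ one checks directly from $\delta_\mathrm{MPL}((x,h))(y,k) = -[(x,h),(y,k)]_\Join$ and the skew-symmetry of $[~,~]_\Join$ that it agrees with $\delta_\mathrm{CE}$ of a $0$-cochain under $\Phi_0 = \mathrm{Id}$.

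Once the family $\{\Phi_n\}_{n \geq 0}$ is shown to commute with the differentials, it automatically descends to a well-defined map $H^\bullet_\mathrm{MPL}(\mathfrak{g}, \mathfrak{h}, \rho, \psi) \to H^\bullet_\mathrm{CE}(\mathfrak{g} \Join \mathfrak{h}; \mathfrak{g} \Join \mathfrak{h})$, which is the assertion of the proposition. I do not anticipate a genuine obstacle: the whole content is that $\oplus_n \mathcal{M}^n$ is a subalgebra of $\mathcal{G}$ containing $\pi$, so the subcomplex $(\mathcal{M}^\bullet, -[\pi, -]_\mathrm{NR})$ includes into $(\mathcal{G}^\bullet, -[\pi_\Join, -]_\mathrm{NR})$. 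The only points requiring care are bookkeeping ones: verifying the bidegree identity $\Phi_2(\pi) = \pi_\Join$, and confirming via Lemma \ref{lemma-g} that $[\pi, F]_\mathrm{NR}$ has no component in the extreme bidegrees $C^{n+1|-1}$ or $C^{-1|n+1}$, so that its image indeed lies in $\Phi_{n+1}(\mathcal{M}^n)$ and agrees term-by-term with the explicit formula (\ref{exp-diff}).
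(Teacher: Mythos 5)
Your proposal is correct and takes essentially the same approach as the paper: the paper's proof verifies $\Phi_{n+1} \circ \delta_\mathrm{MPL} = \delta_\mathrm{CE} \circ \Phi_n$ by expanding $\delta_\mathrm{MPL}(F)$ componentwise via (\ref{exp-diff}) and recognizing the zero-padded tuple as $- [ (0, \mu \ltimes \rho, \psi \rtimes \nu, 0), \Phi_n(F)]_\mathrm{NR} = \delta_\mathrm{CE}(\Phi_n(F))$, which is exactly your two ingredients $\Phi_2(\pi) = \pi_\Join$ together with bracket-preservation by the inclusion of the graded Lie subalgebra $\oplus_n \mathcal{M}^n \subset \mathcal{G}$. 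The only difference is presentational: you invoke Proposition \ref{prop-gla} abstractly where the paper carries out the same identification term by term.
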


\begin{proof}
    Let $F= (F_1, \ldots, F_n) \in C^n (\mathfrak{g}, \mathfrak{h}, \rho, \psi)$ be an arbitrary element, where $F_r \in C^{n-r|r-1}$ for each $r \geq 1$. Then we have
    \begin{align*}
        &(\Phi_{n+1} \circ \delta_\mathrm{MPL}) ((F_1, \ldots, F_n )) \\
        &= \Phi_{n+1} \big( - [\mu \ltimes \rho, F_1]_\mathrm{NR}, \ldots, - [\mu \ltimes \rho, F_r]_\mathrm{NR} - [\psi \rtimes \nu, F_{r-1}]_\mathrm{NR}, \ldots, - [\psi \rtimes \nu, F_n]_\mathrm{NR}   \big)\\
        &= \big( 0, - [\mu \ltimes \rho, F_1]_\mathrm{NR}, \ldots, - [\mu \ltimes \rho, F_r]_\mathrm{NR} - [\psi \rtimes \nu, F_{r-1}]_\mathrm{NR}, \ldots, - [\psi \rtimes \nu, F_n]_\mathrm{NR}, 0   \big)\\
       & = - [ (0, \mu \ltimes \rho, \psi \rtimes \nu, 0), (0, F_1, \ldots, F_n, 0)]_\mathrm{NR} \\
       & = \delta_\mathrm{CE} (0, F_1, \ldots, F_n, 0)\\
       & = (\delta_\mathrm{CE} \circ \Phi_n) ((F_1, \ldots, F_n )).
    \end{align*}
    This proves the first part. The second part is a consequence of the first part.
\end{proof}

The cohomology of a Lie bialgebra $(\mathfrak{g}, \mathfrak{g}^*)$ has been explicitly considered in \cite{ciccoli}. In the following, we find a morphism from the cohomology of a Lie bialgebra $(\mathfrak{g}, \mathfrak{g}^*)$ to the cohomology of the corresponding matched pair of Lie algebras $(\mathfrak{g}, \mathfrak{g}^*, \mathrm{ad}^*_\mathfrak{g}, \mathrm{ad}^*_{\mathfrak{g}^*}).$

Let $(\mathfrak{g}, \mathfrak{g}^*)$ be a Lie bialgebra. For each $n \geq 0$, we define the $n$-th cochain group $C^n_\mathrm{LieBi} (\mathfrak{g}, \mathfrak{g}^*)$ by
\begin{align*}
    C^0_\mathrm{LieBi} (\mathfrak{g} , \mathfrak{g}^*) = 0 ~~~ \text{ and } ~~~ C^{n \geq 1}_\mathrm{LieBi} (\mathfrak{g}, \mathfrak{g}^*) = \oplus_{\substack{ p+q = n+1 \\ p, q \geq 1}} \mathrm{Hom} (\wedge^p \mathfrak{g}, \wedge^q \mathfrak{g}).
\end{align*}
Thus, an element of $C^{n \geq 1}_\mathrm{LieBi} (\mathfrak{g}, \mathfrak{g}^*)$ is given by a tuple $(\xi_1, \ldots, xi_n)$, where $\xi_r \in \mathrm{Hom} (\wedge^{n-r+1} \mathfrak{g}, \wedge^q \mathfrak{g})$. Note that, for any $p, q \geq 1$, there are two maps $\delta_\mathfrak{g} : \mathrm{Hom}(\wedge^p  \mathfrak{g}, \wedge^q \mathfrak{g}) \rightarrow \mathrm{Hom} (\wedge^{p+1} \mathfrak{g}, \wedge^q \mathfrak{g})$ and $\delta_{\mathfrak{g}^*} : \mathrm{Hom}(\wedge^p  \mathfrak{g}, \wedge^q \mathfrak{g}) \rightarrow \mathrm{Hom} (\wedge^{p} \mathfrak{g}, \wedge^{q+1} \mathfrak{g})$ defined as follows. The map $\delta_\mathfrak{g}$ is the Chevalley-Eilenberg coboundary operator of the Lie algebra $\mathfrak{g}$ with coefficients in the adjoint representation in $\wedge^q \mathfrak{g}$. To define the map $\delta_{\mathfrak{g}^*}$, we first identify $\mathrm{Hom}(\wedge^p  \mathfrak{g}, \wedge^q \mathfrak{g})$ with the space $\mathrm{Hom}(\wedge^q  \mathfrak{g}^*, \wedge^p \mathfrak{g}^*)$. With this, $\delta_{\mathfrak{g}^*}$ is the Chevalley-Eilenberg coboundary operator of the Lie algebra $\mathfrak{g}^*$ with coefficients in the adjoint representation in $\wedge^p \mathfrak{g}^*$. Finally, we define a map $ \delta_\mathrm{LieBi} : C^n_\mathrm{LieBi} (\mathfrak{g}, \mathfrak{g}^*) \rightarrow C^{n+1}_\mathrm{LieBi} (\mathfrak{g}, \mathfrak{g}^*) $ by
\begin{align*}
   \delta_\mathrm{LieBi} (( \xi_1, \ldots, \xi_n )) = \big( \delta_\mathfrak{g} (\xi_1), \ldots, \delta_\mathfrak{g} (\xi_r) +  \delta_{\mathfrak{g}^*} (\xi_{r-1}), \ldots, \delta_{\mathfrak{g}^*} (\xi_n) \big).
\end{align*}
Then it has been shown in \cite{ciccoli} that $\{ C^\bullet_\mathrm{LieBi} (\mathfrak{g}, \mathfrak{g}^*), \delta_\mathrm{LieBi} \}$ is a cochain complex, and the corresponding cohomology groups are called the cohomology of the Lie bialgebra $(\mathfrak{g}, \mathfrak{g}^*)$.

Since any element of $\wedge^q \mathfrak{g}$ induces a linear map $\wedge^{q-1} \mathfrak{g}^* \rightarrow \mathfrak{g}$, there is a natural map $\mathrm{Hom} (\wedge^p \mathfrak{g}, \wedge^q \mathfrak{g}) \rightarrow \mathrm{Hom} (\wedge^p \mathfrak{g} \otimes \wedge^{q-1} \mathfrak{g}^*, \mathfrak{g}), ~\xi \mapsto \widetilde{\xi}$. On the other hand, by identifying the space $\mathrm{Hom} (\wedge^p \mathfrak{g}, \wedge^q \mathfrak{g})$ with the space $\mathrm{Hom} (\wedge^q \mathfrak{g}^*, \wedge^p \mathfrak{g}^*)$, one gets a map $\mathrm{Hom} (\wedge^p \mathfrak{g}, \wedge^q \mathfrak{g}) \rightarrow \mathrm{Hom} (\wedge^{p-1} \mathfrak{g} \otimes \wedge^q \mathfrak{g}^*, \mathfrak{g}^*), ~ \xi \mapsto \overline{\xi}$. Thus, one obtains a map
\begin{align*}
    \mathrm{Hom} (\wedge^p \mathfrak{g}, \wedge^q \mathfrak{g})  ~&\xrightarrow{\Psi} C^{p-1|q-1} (\mathfrak{g}, \mathfrak{g}^*, \mathrm{ad}^*_\mathfrak{g}, \mathrm{ad}^*_{\mathfrak{g}^*}) = \mathrm{Hom} (\wedge^p \mathfrak{g} \otimes \wedge^{q-1} \mathfrak{g}^*, \mathfrak{g}) \oplus \mathrm{Hom} (\wedge^{p-1} \mathfrak{g} \otimes \wedge^{q} \mathfrak{g}^*, \mathfrak{g}^*) \\
    \xi ~&\mapsto \widetilde{\xi} + \overline{\xi}.
\end{align*}
Then we have
\begin{align*}
    &(\Psi \circ \delta_\mathrm{LieBi}) ((\xi_1, \ldots, \xi_n)) \\
    &= \Psi \big(  \delta_\mathfrak{g} (\xi_1), \ldots, \delta_\mathfrak{g} (\xi_r) +  \delta_{\mathfrak{g}^*} (\xi_{r-1}), \ldots, \delta_{\mathfrak{g}^*} (\xi_n)  \big) \\
    &= \big( \widetilde{\delta_\mathfrak{g} (\xi_1)} + \overline{\delta_\mathfrak{g} (\xi_1)}, \ldots,  \widetilde{\delta_\mathfrak{g} (\xi_r)} + \widetilde{ \delta_{\mathfrak{g}^*} (\xi_{r-1})} + \overline{\delta_\mathfrak{g} (\xi_r)} +  \overline{\delta_{\mathfrak{g}^*} (\xi_{r-1})}, \ldots, \widetilde{\delta_{\mathfrak{g}^*} (\xi_n) } +  \overline{\delta_{\mathfrak{g}^*} (\xi_n) }  \big) \\
    &= - [ (\mu \ltimes \mathrm{ad}^*_\mathfrak{g}, \mathrm{ad}^*_{\mathfrak{g}^*} \rtimes \nu), (\widetilde{\xi_1} + \overline{\xi_1}, \ldots, \widetilde{\xi_n} + \overline{\xi_n}) ]_\mathrm{NR} \\
    &= \delta_\mathrm{MPL} \big(\widetilde{\xi_1} + \overline{\xi_1}, \ldots, \widetilde{\xi_n} + \overline{\xi_n}   \big)\\
    &= (\delta_\mathrm{MPL} \circ \Psi)  ((\xi_1, \ldots, \xi_n)).
\end{align*}
This shows that there is a morphism of cochain complexes from  $\{ C^\bullet_\mathrm{LieBi} (\mathfrak{g}, \mathfrak{g}^*), \delta_\mathrm{LieBi} \}$ to the complex $\{ C^\bullet (\mathfrak{g}, \mathfrak{g}^*, \mathrm{ad}^*_\mathfrak{g}, \mathrm{ad}^*_{\mathfrak{g}^*}), \delta_\mathrm{MPL} \}$. Hence it induces a morphism $H^\bullet_\mathrm{LieBi} (\mathfrak{g}, \mathfrak{g}^*) \rightarrow H^\bullet_\mathrm{MPL} (\mathfrak{g}, \mathfrak{g}^*, \mathrm{ad}^*_\mathfrak{g}, \mathrm{ad}^*_{\mathfrak{g}^*})$ between the corresponding cohomology groups.

\subsection{Cohomology with coefficients in a representation} In this subsection, we introduce the cohomology of a matched pair of Lie algebras with coefficients in a representation. Let $(\mathfrak{g}, \mathfrak{h}, \rho, \psi)$ be a matched pair of Lie algebras and $(V, W, \alpha, \beta)$ be a representation of it. For each $k, l \geq 0$, we consider
\begin{align*}
    C^{k|l} (\mathfrak{g} , \mathfrak{h} ; V , W) := \mathrm{Hom} (\wedge^{k+1} \mathfrak{g} \otimes \wedge^l \mathfrak{h} , V) \oplus \mathrm{Hom} (\wedge^{k} \mathfrak{g} \otimes \wedge^{l+1} \mathfrak{h} , W).
\end{align*}
Next, given any $n \geq 0$, we define the space of $n$-cochains $ C^n (\mathfrak{g} , \mathfrak{h}; V , W)$ by
\begin{align*}
    &C^0 (\mathfrak{g} , \mathfrak{h}; V , W) = V \oplus W ~~~~ \text{ and } \\
    &C^{n \geq 1} (\mathfrak{g} , \mathfrak{h}; V , W) = C^{n-1|0}  (\mathfrak{g} , \mathfrak{h}; V , W) \oplus \cdots \oplus  C^{0|n-1}  (\mathfrak{g} , \mathfrak{h}; V , W).
\end{align*}
Thus, an element $F \in C^{n \geq 1}(\mathfrak{g} , \mathfrak{h}; V , W)$ can be given by an $n$-tuple $F = (F_1, F_2, \ldots, F_n)$ where $F_r \in C^{n-r| r-1} (\mathfrak{g} , \mathfrak{h}; V , W)$. We also define a map $\delta_\mathrm{MPL} : C^n(\mathfrak{g} , \mathfrak{h}; V , W) \rightarrow C^{n+1} (\mathfrak{g} , \mathfrak{h}; V , W)$ by 
\begin{align*}
    \delta_\mathrm{MPL} ((v, w)) (x, h) := \big( (\rho_V)_x v + (\psi_V)_h v - \beta_w x , (\psi_W)_h w + (\rho_W)_x w - \alpha_v h \big),
\end{align*}
for $(v, w) \in V \oplus W$ and $(x, h) \in \mathfrak{g} \oplus \mathfrak{h}$, and
\begin{align}\label{exp-diff-co}
\delta_\mathrm{MPL} ((F_1, \ldots, F_n )) := \big(  \delta^{\mu \ltimes \rho} (F_1), \ldots, \underbrace{ \delta^{\mu \ltimes \rho} (F_r) + \delta^{\psi \rtimes \nu} (F_{r-1}) }_{\in C^{n-r+1| r-1} (\mathfrak{g} , \mathfrak{h}; V , W)  } , \ldots, \delta^{\psi \rtimes \nu} (F_n)  \big).
\end{align}
Here all the component functions are respectively given by
\begin{align*}
    \delta^{\mu \ltimes \rho} (F_r)& (x_1, \ldots, x_{n-r+2}, h_1, \ldots, h_{r-1}) \\
    &= \sum_{i=1}^{n-r+2} (-1)^{i+1} (\rho_V)_{x_i} F_r (x_1, \ldots, \widehat{x_i}, \ldots, x_{n-r+2}, h_1, \ldots, h_{r-1}) \\
    &+ \sum_{1 \leq i < j \leq n-r+2} (-1)^{i+j} F_r ([x_i , x_j]_\mathfrak{g}, x_1, \ldots, \widehat{x_i}, \ldots, \widehat{x_j}, \ldots, x_{n-r+2}, h_1, \ldots, h_{r-1}) \\
    &+ \sum_{\substack{1 \leq i \leq n-r+2 \\ 1 \leq j \leq r-1}} (-1)^{n-r+i} F_r (x_1, \ldots, \widehat{x_i}, \ldots, x_{n-r+2}, h_1, \ldots, \rho_{x_i} h_j , \ldots, h_{r-1}),
\end{align*}
\begin{align*}
     \delta^{\mu \ltimes \rho} (F_r) & (x_1, \ldots, x_{n-r+1}, h_1, \ldots, h_{r}) \\
    &= \sum_{i=1}^{n-r+1} (-1)^{i+1} (\rho_W)_{x_i} F_r (x_1, \ldots, \widehat{x_i}, \ldots, x_{n-r+1}, h_1, \ldots, h_{r}) \\
    &+ \sum_{i=1}^r (-1)^{n-r+i+1} \alpha \big(  F_r (x_1, \ldots, x_{n-r+1}, h_1, \ldots, \widehat{h_i}, \ldots, h_r), h_i  \big) \\
    &+ \sum_{1 \leq i < j \leq n-r+1} (-1)^{i+j} F_r ([x_i , x_j]_\mathfrak{g}, x_1, \ldots, \widehat{x_i}, \ldots, \widehat{x_j}, \ldots, x_{n-r+1}, h_1, \ldots, h_{r}) \\
    &+ \sum_{\substack{1 \leq i \leq n-r+1 \\ 1 \leq j \leq r}} (-1)^{n-r+i+1} F_r (x_1, \ldots, \widehat{x_i}, \ldots, x_{n-r+1}, h_1, \ldots, \rho_{x_i} h_j , \ldots, h_{r}),
\end{align*}
\begin{align*}
    \delta^{\psi \rtimes \nu} (F_r) &(x_1, \ldots, x_{n-r+1}, h_1,\ldots, h_{r}) \\
   & =  \sum_{i=1}^{n-r+1} (-1)^i \beta \big(  F_r (x_1, \ldots, \widehat{x_i}, \ldots, x_{n-r+1}, h_1, \ldots, h_r), x_i  \big)\\
    &+ \sum_{i=1}^r (-1)^{n-r+i} (\psi_V)_{h_i} F_r (x_1, \ldots, x_{n-r+1}, h_1, \ldots, \widehat{h_i}, \ldots, h_r)\\
    &+ \sum_{\substack{1 \leq i \leq n-r+1 \\ 1 \leq j \leq r}} (-1)^{n-r+j} F_r (x_1, \ldots, \psi_{h_j} x_i, \ldots, x_{n-r+1}, h_1, \ldots, \widehat{h_j}, \ldots, h_r)\\
    &+ \sum_{1 \leq j < k \leq r} (-1)^{n-r+1 + j+ k} F_r (x_1, \ldots, x_{n-r+1}, [h_j , h_k]_\mathfrak{h}, h_1, \ldots, \widehat{h_i}, \ldots, \widehat{h_j}, \ldots, h_r),
\end{align*}
\begin{align*}
     \delta^{\psi \rtimes \nu} (F_r) &(x_1, \ldots, x_{n-r}, h_1,\ldots, h_{r+1}) \\
    & = \sum_{i=1}^{r+1} (-1)^{n-r+i +1} (\psi_W)_{h_i} F_r (x_1, \ldots, x_{n-r}, h_1, \ldots, \widehat{h_i}, \ldots, h_{r+1}) \\
     &+ \sum_{\substack{ 1 \leq i \leq n-r \\ 1 \leq j \leq r+1}} (-1)^{n-r+j+1} F_r (x_1, \ldots, \psi_{h_j} x_i, \ldots , x_{n-r}, h_1, \ldots, \widehat{h_j}, \ldots, h_{r+1}) \\
    & + \sum_{1 \leq j , k \leq r+1} (-1)^{n-r+j + k} F_r (x_1, \ldots, x_{n-r}, [h_j, h_k]_\mathfrak{h}, h_1, \ldots, \widehat{h_j}, \ldots, \widehat{h_k}, \ldots, h_{r+1}),
\end{align*}
for all $F_r \in C^{n-r| r-1} (\mathfrak{g} , \mathfrak{h}; V , W)$ and all $x_i \in \mathfrak{g}$, $h_i \in \mathfrak{h}$. Then we have the following result.

\begin{prop}
    With the above notations, $\{ C^\bullet (\mathfrak{g} , \mathfrak{h}; V , W), \delta_\mathrm{MPL} \}$ is a cochain complex.
\end{prop}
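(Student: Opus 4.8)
The plan is to reduce the statement to the adjoint-coefficient case already established, via the semidirect product construction, so that no fresh Maurer--Cartan computation is needed. First I would form, using Theorem \ref{semid-thm}, the semidirect product matched pair of Lie algebras $(\mathfrak{g}\ltimes V, \mathfrak{h}\ltimes W, \rho\ltimes\alpha, \psi\ltimes\beta)$ on the pair of vector spaces $(\mathfrak{g}\oplus V, \mathfrak{h}\oplus W)$. By Theorem \ref{mc-mpl} its structure is encoded by a Maurer--Cartan element $\widetilde{\pi}$ of the graded Lie algebra $(\oplus_{n}\mathcal{M}^n, [~,~]_\mathrm{NR})$ built on $(\mathfrak{g}\oplus V, \mathfrak{h}\oplus W)$, and the associated adjoint coboundary $\widetilde{\delta} = -[\widetilde{\pi}, -]_\mathrm{NR}$ automatically squares to zero. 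The whole proof will consist of exhibiting $\{C^\bullet(\mathfrak{g},\mathfrak{h};V,W), \delta_\mathrm{MPL}\}$ as a subcomplex of this adjoint complex.

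The key device is a weight grading. I would assign weight $0$ to every element of $\mathfrak{g}$ and $\mathfrak{h}$ and weight $1$ to every element of $V$ and $W$, and grade a multilinear map by how much it raises the total weight. A direct inspection of the brackets on $\mathfrak{g}\ltimes V$ and $\mathfrak{h}\ltimes W$ and of the maps $\rho\ltimes\alpha$ and $\psi\ltimes\beta$ shows that each of them is homogeneous of weight $0$ (the abelian ideals $V$ and $W$ never occur in the output more often than in the input), so $\widetilde{\pi}$ has weight $0$ and $\widetilde{\delta}$ preserves the weight grading. Consequently the weight-$(+1)$ part of the adjoint cochains is a subcomplex. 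I would then observe that this weight-$(+1)$ part is, in bidegree $k|l$, exactly $\mathrm{Hom}(\wedge^{k+1}\mathfrak{g}\otimes\wedge^l\mathfrak{h}, V)\oplus\mathrm{Hom}(\wedge^{k}\mathfrak{g}\otimes\wedge^{l+1}\mathfrak{h}, W) = C^{k|l}(\mathfrak{g},\mathfrak{h};V,W)$: a weight-$(+1)$ cochain must take all its arguments from $\mathfrak{g}\oplus\mathfrak{h}$ (a weight-$1$ input would force a weight-$2$ output, impossible in $\mathfrak{g}\oplus V$ and $\mathfrak{h}\oplus W$) and land in $V\oplus W$. This yields a degreewise identification of $C^n(\mathfrak{g},\mathfrak{h};V,W)$ with the weight-$(+1)$ part of the $n$-th adjoint cochain group of the semidirect product.

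Under this identification the only remaining task is to check that the restriction of $\widetilde{\delta} = -[\widetilde{\pi}, -]_\mathrm{NR}$ agrees with the explicitly defined operator $\delta_\mathrm{MPL}$ of (\ref{exp-diff-co}), after which $\delta_\mathrm{MPL}^2 = 0$ follows from $\widetilde{\delta}^2 = 0$ for free. This comparison is where the work lies: expanding the Nijenhuis--Richardson bracket of $\widetilde{\pi}$ with a weight-$(+1)$ cochain and collecting the terms landing in $V$ and in $W$ separately, one recovers precisely the four families of summands written out for $\delta^{\mu\ltimes\rho}$ and $\delta^{\psi\rtimes\nu}$ --- the adjoint-type terms give the $\rho_V, \rho_W, \psi_V, \psi_W$ contributions together with the internal brackets $[~,~]_\mathfrak{g}$ and $[~,~]_\mathfrak{h}$, while the mixed pieces coming from $\alpha$ and $\beta$ inside $\rho\ltimes\alpha$ and $\psi\ltimes\beta$ produce exactly the $\alpha$- and $\beta$-terms. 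I expect the main obstacle to be purely bookkeeping: tracking the shuffle signs and the index shifts so that the expressions match (\ref{exp-diff-co}) on the nose.

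As an alternative that avoids the semidirect product, one can argue directly that $\delta_\mathrm{MPL}^2 = 0$ is equivalent, after decomposing the square componentwise, to the three operator identities $(\delta^{\mu\ltimes\rho})^2 = 0$, $(\delta^{\psi\rtimes\nu})^2 = 0$ and $\delta^{\mu\ltimes\rho}\delta^{\psi\rtimes\nu} + \delta^{\psi\rtimes\nu}\delta^{\mu\ltimes\rho} = 0$, which mirror the three Maurer--Cartan conditions of Theorem \ref{mc-mpl} now read off with coefficients in $(V, W, \alpha, \beta)$. This route is a longer sign chase, so I would prefer the weight-grading argument, which reuses the already-proven facts and isolates the computation to a single formula comparison.
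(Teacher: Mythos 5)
Your proposal is correct and follows essentially the same route as the paper: the paper's proof also forms the semidirect product $(\mathfrak{g}\ltimes V, \mathfrak{h}\ltimes W, \rho\ltimes\alpha, \psi\ltimes\beta)$ via Theorem \ref{semid-thm}, takes its adjoint cochain complex, and identifies $\{C^\bullet(\mathfrak{g},\mathfrak{h};V,W),\delta_\mathrm{MPL}\}$ as a subcomplex. Your weight-grading device is simply a clean way of making precise the closure-under-coboundary and identification steps that the paper dismisses as ``straightforward to verify.''
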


\begin{proof}
Since $(\mathfrak{g}, \mathfrak{h}, \rho, \psi)$ is a matched pair of Lie algebras and $(V, W, \alpha, \beta)$ is a representation, it follows from Theorem \ref{semid-thm} that the quadruple $(\mathfrak{g} \ltimes V, \mathfrak{h} \ltimes W, \rho \ltimes \alpha, \psi \ltimes \beta)$ is a matched pair of Lie algebras. Thus, by the construction given in the previous subsection, we may consider the cochain complex 
\begin{align*}
    \{ C^\bullet (\mathfrak{g} \ltimes V, \mathfrak{h} \ltimes W, \rho \ltimes \alpha, \psi \ltimes \beta), \delta_\mathrm{MPL} \}.
\end{align*}
It is straightforward to verify that  $\{ C^\bullet(\mathfrak{g} , \mathfrak{h}; V , W), \delta_\mathrm{MPL} \}$ is a subcomplex of the above cochain complex. Hence the result follows.
\end{proof}

\begin{defn}
    Let $(\mathfrak{g}, \mathfrak{h}, \rho, \psi)$ be a matched pair of Lie algebras and $(V, W, \alpha, \beta)$ be a representation of it. Then the cohomology groups of the cochain complex $\{ C^\bullet (\mathfrak{g} , \mathfrak{h}; V , W), \delta_\mathrm{MPL} \}$ are called the {\em cohomology} of $(\mathfrak{g}, \mathfrak{h}, \rho, \psi)$ with coefficients in the representation $(V, W, \alpha, \beta)$. The cohomology groups are denoted by $H^\bullet_\mathrm{MPL} (\mathfrak{g} , \mathfrak{h}; V , W)$.
\end{defn}

When $(V, W, \alpha, \beta) = ( \mathfrak{g}, \mathfrak{h}, \rho, \psi)$ is the adjoint representation, we have $C^n(\mathfrak{g} , \mathfrak{h}; V , W) = C^n (\mathfrak{g}, \mathfrak{h}, \rho, \psi)$ for all $n \geq 0$. Moreover, the coboundary map (\ref{exp-diff-co}) coincides with that of (\ref{exp-diff}). Hence the corresponding cohomology groups are precisely $H^\bullet_\mathrm{MPL} (\mathfrak{g}, \mathfrak{h}, \rho, \psi)$ considered in the previous subsection.

\section{Infinitesimal deformations and abelian extensions}\label{sec5}

\subsection{Infinitesimal deformations} In this subsection, we study infinitesimal deformations of a matched pair of Lie algebras $(\mathfrak{g}, \mathfrak{h}, \rho, \psi)$. Our main result shows that the set of all equivalence classes of infinitesimal deformations is classified by the second cohomology group $H^2_\mathrm{MPL} (\mathfrak{g}, \mathfrak{h}, \rho, \psi)$.

\begin{defn}
    Let $(\mathfrak{g}, \mathfrak{h}, \rho, \psi)$ be a matched pair of Lie algebras. An {\em infinitesimal deformation} of $(\mathfrak{g}, \mathfrak{h}, \rho, \psi)$ is given by a quadruple $(\mu_1, \nu_1, \rho_1, \psi_1)$ of bilinear maps
    \begin{align*}
        \mu_1 : \mathfrak{g} \times \mathfrak{g} \rightarrow \mathfrak{g}, \quad \nu_1 : \mathfrak{h} \times \mathfrak{h} \rightarrow \mathfrak{h}, \quad \rho_1 : \mathfrak{g} \times \mathfrak{h} \rightarrow \mathfrak{h} ~~~~ \text{ and } ~~~~ \psi_1 : \mathfrak{h} \times \mathfrak{g} \rightarrow \mathfrak{g}
    \end{align*}
    in which $\mu_1, \nu_1$ are skew-symmetric that makes the quadruple $\big( (\mathfrak{g}[t]/(t^2), \mu_t), ( \mathfrak{h}[t]/(t^2), \nu_t), \rho_t, \psi_t \big)$ into a matched pair of Lie algebras over the ring ${\bf k}[t]/(t^2)$. Here the ${\bf k}[t]/(t^2)$-bilinear Lie brackets (on $\mathfrak{g}[t]/(t^2)$ and $\mathfrak{h}[t]/(t^2)$, respectively) and the ${\bf k}[t]/(t^2)$-bilinear maps $\rho_t, \psi_t$ are given by
    \begin{align*}
        \mu_t (x, y) :=~& [x, y]_\mathfrak{g} + t \mu_1 (x, y), \\
        \nu_t (h, k) :=~& [h, k]_\mathfrak{h} + t \nu_1 (h, k),\\
        (\rho_t)_x h :=~& \rho_x h + t (\rho_1)_x h,\\
        (\psi_t)_h x :=~& \psi_h x + t (\psi_1)_h x,
    \end{align*}
    for $x, y \in \mathfrak{g}$ and $h, k \in \mathfrak{h}.$
\end{defn}

Let $(\mu_1, \nu_1, \rho_1, \psi_1)$ be an infinitesimal deformation of the matched pair of Lie algebras $(\mathfrak{g}, \mathfrak{h}, \rho, \psi)$. Since $(\mathfrak{g}[t]/ (t^2), \mu_t)$ is a Lie algebra over the ring ${\bf k}[t]/(t^2)$, it turns out that the skew-symmetric bilinear map $\mu_1 : \mathfrak{g} \times \mathfrak{g} \rightarrow \mathfrak{g}$ satisfies
\begin{align}\label{inf-1-eqn}
    [ \mu_1 (x, y), z]_\mathfrak{g} + [ \mu_1 (y, z), x]_\mathfrak{g} + [ \mu_1 (z, x), y]_\mathfrak{g} + \mu_1 ( [x, y]_\mathfrak{g}, z) + \mu_1 ( [y, z]_\mathfrak{g}, x) + \mu_1 ( [z, x]_\mathfrak{g}, y) = 0,  
\end{align}
for all $x, y, z \in \mathfrak{g}$. Similarly, $(\mathfrak{h}[t]/(t^2), \nu_t)$ is a Lie algebra over the ring ${\bf k}[t]/(t^2)$ implies that the skew-symmetric bilinear map $\nu_1 : \mathfrak{h} \times \mathfrak{h} \rightarrow \mathfrak{h}$ satisfies
\begin{align}\label{inf-2-eqn}
    [ \nu_1 (h, k), k']_\mathfrak{h} + [ \nu_1 (k, k'), h]_\mathfrak{h} + [ \nu_1 (k', h), k]_\mathfrak{h} + \nu_1 ( [h, k]_\mathfrak{h}, k') + \nu_1 ( [k, k']_\mathfrak{h}, h) + \nu_1 ( [k', h]_\mathfrak{h}, k) = 0,
\end{align}
for $h, k, k' \in \mathfrak{h}$.
The map $\rho_t$ (resp. $\psi_t$) defines a representation of the Lie algebra $(\mathfrak{g}[t]/ (t^2), \mu_t)$ on the space $\mathfrak{h}[t]/ (t^2)$ (resp. a representation of the Lie algebra $(\mathfrak{h}[t]/ (t^2), \mu_t)$ on the space $\mathfrak{g}[t]/ (t^2)$). These conditions yield the following
\begin{align}
    \rho_{\mu_1 (x, y)} h + (\rho_1)_{[x, y]_\mathfrak{g}} h =~& \rho_x (\rho_1)_y h + (\rho_1)_x \rho_y h - \rho_y (\rho_1)_x h - (\rho_1)_y \rho_x h, \label{inf-3-eqn} \\
    \psi_{\nu_1 (h, k)} x + (\psi_1)_{[h, k]_\mathfrak{h}} x =~& \psi_h (\psi_1)_k x + (\psi_1)_h \psi_k x - \psi_k (\psi_1)_h x - (\rho_1)_k \rho_h x. \label{inf-4-eqn}
\end{align}
Finally, the maps $\rho_t$ and $\psi_t$ satisfies the compatibility conditions (\ref{11}), (\ref{22}) of a matched pair of Lie algebras. These conditions are respectively equivalent to 
\begin{align}
    \rho_x \nu_1 (h, k) + (\rho_1)_x [h, k]_\mathfrak{h} =~& [(\rho_1)_x h, k]_\mathfrak{h} + \nu_1 (\rho_x h, k) + [h, (\rho_1)_x h]_\mathfrak{h} + \nu_1 (h, \rho_x k)  \label{inf-5-eqn} \\
    &+ \rho_{  (\psi_1)_k x} h + (\rho_1)_{\psi_k x} h - \rho_{(\psi_1)_h x} k - (\rho_1)_{\psi_h x} k, \nonumber\\
    \psi_h \mu_1 (x, y) + (\psi_1)_h [x, y]_\mathfrak{g} =~& [(\psi_1)_h x, y]_\mathfrak{g} + \mu_1 (\psi_h x, y) + [x, (\psi_1)_h y]_\mathfrak{g} + \mu_1 (x, \psi_h y)  \label{inf-6-eqn} \\
    &+ \psi_{(\rho_1)_y h} x + (\psi_1)_{\rho_y h} x - \psi_{(\rho_1)_x h} y - (\psi_1)_{\rho_x h} y, \nonumber
\end{align}
for all $x, y \in \mathfrak{g}$ and $h, k \in \mathfrak{h}$.
To understand the identities (\ref{inf-1-eqn})-(\ref{inf-6-eqn}) in compact form, we introduce two elements $ \mu_1 \ltimes \rho_1 \in C^{1|0}$ and $ \psi_1 \rtimes \nu_1 \in C^{0|1}$ by
\begin{align*}
    (\mu_1 \ltimes \rho_1) \big(  (x, h), (y, k)  \big) :=~& \big( \mu_1 (x, y), (\rho_1)_x k - (\rho_1)_y h  \big),\\
    (\psi_1 \rtimes \nu_1) \big(  (x, h), (y, k)  \big) :=~& \big(  (\psi_1)_h y - (\psi_1)_k x, \nu_1 (h, k)  \big),
\end{align*}
for $(x, h), (y, k) \in \mathfrak{g} \oplus \mathfrak{h}$.
With the above notations, we have
\begin{align*}
  (\ref{inf-1-eqn}) , (\ref{inf-3-eqn})   ~~~~   \Longleftrightarrow &  ~~~~ [\mu \ltimes \rho,  \mu_1 \ltimes \rho_1]_\mathrm{NR} = 0,\\
    (\ref{inf-2-eqn}) , (\ref{inf-4-eqn})   ~~~~    \Longleftrightarrow & ~~~~ [\psi \rtimes \nu, \psi_1 \rtimes \nu_1 ]_\mathrm{NR} = 0,\\
       (\ref{inf-5-eqn}) , (\ref{inf-6-eqn})   ~~~~  \Longleftrightarrow & ~~~~ [\mu \ltimes \rho, \psi_1 \rtimes \nu_1]_\mathrm{NR} + [\psi \rtimes \nu , \mu_1 \ltimes \rho_1]_\mathrm{NR} = 0.
\end{align*}
Hence the identities (\ref{inf-1-eqn})-(\ref{inf-6-eqn}) is equivalent to the condition $ \delta_\mathrm{MPL} (( \mu_1 \ltimes \rho_1 ,  \psi_1 \rtimes \nu_1 )) = 0$.
%\begin{align*}
%    \delta_\mathrm{MPL} ( \mu_1 \ltimes \rho_1 ,  \psi_1 \rtimes \nu_1 ) = ( [\mu \ltimes \rho,  \mu_1 \ltimes \rho_1]_\mathrm{NR} , [\mu \ltimes \rho, \psi_1 \rtimes \nu_1]_\mathrm{NR} + [\psi \rtimes \nu , \mu_1 \ltimes \rho_1]_\mathrm{NR} , [\psi \rtimes \nu, \psi_1 \rtimes \nu_1 ]_\mathrm{NR} ) = 0.
%\end{align*}
Thus, a quadruple $( \mu_1, \nu_1, \rho_1, \psi_1 )$ is an infinitesimal deformation if and only if the element $(  \mu_1 \ltimes \rho_1 ,  \psi_1 \rtimes \nu_1 ) \in Z^2_\mathrm{MPL} (\mathfrak{g}, \mathfrak{h}, \rho, \psi)$ is a $2$-cocycle.

Let $(\mu_1, \nu_1, \rho_1, \psi_1)$ and $(\mu'_1, \nu'_1, \rho'_1, \psi'_1)$ be two infinitesimal deformations of a matched pair of Lie algebras $(\mathfrak{g}, \mathfrak{h}, \rho, \psi)$. They are said to be {\em equivalent} (we write $(\mu_1, \nu_1, \rho_1, \psi_1) \sim (\mu'_1, \nu'_1, \rho'_1, \psi'_1)$) if there exist linear maps $f: \mathfrak{g} \rightarrow \mathfrak{g}$ and $g: \mathfrak{h} \rightarrow \mathfrak{h}$ such that the pair $(\mathrm{id}_\mathfrak{g} + t f, \mathrm{id}_\mathfrak{h} + t g )$ defines a morphism of matched pairs of Lie algebras from
\begin{align*}
   ( (\mathfrak{g}[t]/(t^2) , \mu_t), (\mathfrak{h}[t]/(t^2), \nu_t ), \rho_t, \psi_t) \quad \text{ to } \quad ( (\mathfrak{g}[t]/(t^2), \mu_t') , (\mathfrak{h}[t]/(t^2), \nu_t'), \rho'_t, \psi'_t).
\end{align*}

Suppose $(\mu_1, \nu_1, \rho_1, \psi_1)$ and $(\mu'_1, \nu'_1, \rho'_1, \psi'_1)$ are two equivalent infinitesimal deformations. Since the map $\mathrm{id}_\mathfrak{g} + t  f : ( \mathfrak{g}[t]/ (t^2), \mu_t) \rightarrow ( \mathfrak{g}[t]/ (t^2), \mu'_t)$ is a morphism of Lie algebras over the ring ${\bf k}[t]/(t^2)$, it follows that
\begin{align}\label{inf-7-eqn}
    \mu_1 (x, y) - \mu_1' (x, y) = [x, f(y)]_\mathfrak{g} - f ([x, y]_\mathfrak{g}) + [f(x), y]_\mathfrak{g}, \text{ for all } x, y \in \mathfrak{g}.
\end{align}
Similarly, the map $\mathrm{id}_\mathfrak{h} + t  g : ( \mathfrak{h}[t]/ (t^2), \nu_t) \rightarrow ( \mathfrak{h}[t]/ (t^2), \nu'_t)$ is a morphism of Lie algebras over the ring ${\bf k}[t]/(t^2)$ implies that
\begin{align}\label{inf-8-eqn}
    \nu_1 (h, k) - \nu_1' (h, k) = [h, g(k)]_\mathfrak{h} - g ([h, k]_\mathfrak{h}) + [g(h) , k]_\mathfrak{h}, \text{ for all } h, k \in \mathfrak{h}.
\end{align}
Finally, we also have the identities $ (\mathrm{id}_\mathfrak{h} + t  g) ( (\rho_t)_x h) = (\rho_t')_{(\mathrm{id}_\mathfrak{g} + t  f)(x)} (\mathrm{id}_\mathfrak{h} + t  g)(h)$ and $ (\mathrm{id}_\mathfrak{g} + t  f) ( (\psi_t)_h x) = (\psi_t')_{(\mathrm{id}_\mathfrak{h} + t g)(h)} (\mathrm{id}_\mathfrak{g} + t  f)(x)$, for all $x \in \mathfrak{g}$ and $h \in \mathfrak{h}$. These conditions are respectively equivalent to
\begin{align}
    (\rho_1)_x h - (\rho_1')_x h =~& \rho_x g (h) - g (\rho_x h) + \rho_{f(x)}h, \label{inf-9-eqn}\\
    (\psi_1)_h x - (\psi_1')_h x =~& \psi_h f(x) - f(\psi_h x) + \psi_{g(h)} x, \label{inf-10-eqn}
\end{align}
for all $x \in \mathfrak{g}$ and $h \in \mathfrak{h}$.
The conditions (\ref{inf-7-eqn})-(\ref{inf-10-eqn}) can be equivalently written as 
\begin{align*}
    (\mu_1 \ltimes \rho_1, \psi_1 \rtimes \nu_1) - ( \mu_1' \ltimes \rho_1', \psi_1' \rtimes \nu_1') = \delta_\mathrm{MPL} ((f,g)),
\end{align*}
%Similarly, the conditions (\ref{inf-8-eqn}),(\ref{inf-10-eqn}) can be equivalently expressed as
%\begin{align*}
 %   () - () = ...
%\end{align*}
where we view $(f, g) \in \mathrm{Hom} (\mathfrak{g}, \mathfrak{g}) \oplus \mathrm{Hom} (\mathfrak{h}, \mathfrak{h}) = C^{0|0}$ as an element of $C^1 (\mathfrak{g}, \mathfrak{h}, \rho, \psi)$.
%As a consequence, we have $ ( \mu_1 \ltimes \rho_1, \psi_1 \rtimes \nu_1 ) - (  \mu'_1 \ltimes \rho'_1, \psi'_1 \rtimes \nu'_1  ) = \delta_\mathrm{MPL} (f, g)$, which
This implies that the $2$-cocycles $(  \mu_1 \ltimes \rho_1, \psi_1 \rtimes \nu_1  )$ and $(  \mu'_1 \ltimes \rho'_1, \psi'_1 \rtimes \nu'_1  )$ are cohomologous. Hence we obtain the following result.

\begin{thm}
    Let $(\mathfrak{g}, \mathfrak{h}, \rho, \psi)$ be a matched pair of Lie algebras. Then there is a bijection
    \begin{align*}
        \{ \text{the set of all infinitesimal deformations of } (\mathfrak{g}, \mathfrak{h}, \rho, \psi)\}/\sim ~ ~ \longleftrightarrow ~ H^2_\mathrm{MPL} (\mathfrak{g}, \mathfrak{h}, \rho, \psi).
    \end{align*}
\end{thm}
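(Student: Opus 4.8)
The plan is to assemble the computations carried out in the paragraphs preceding the statement into an explicit bijection. First I would define a map $\Theta$ from the set of infinitesimal deformations of $(\mathfrak{g}, \mathfrak{h}, \rho, \psi)$ to $H^2_\mathrm{MPL}(\mathfrak{g}, \mathfrak{h}, \rho, \psi)$ by sending a deformation $(\mu_1, \nu_1, \rho_1, \psi_1)$ to the cohomology class $[(\mu_1 \ltimes \rho_1, \psi_1 \rtimes \nu_1)]$ of the associated element of $C^2(\mathfrak{g}, \mathfrak{h}, \rho, \psi) = \mathcal{M}^1$. The analysis already performed shows that the defining conditions (\ref{inf-1-eqn})--(\ref{inf-6-eqn}) of a deformation are jointly equivalent to $\delta_\mathrm{MPL}((\mu_1 \ltimes \rho_1, \psi_1 \rtimes \nu_1)) = 0$, so this element is a genuine $2$-cocycle and $\Theta$ indeed lands in cohomology.

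Next I would check that $\Theta$ factors through equivalence classes and is injective on them. If $(\mu_1, \nu_1, \rho_1, \psi_1) \sim (\mu_1', \nu_1', \rho_1', \psi_1')$ via linear maps $f \colon \mathfrak{g} \to \mathfrak{g}$ and $g \colon \mathfrak{h} \to \mathfrak{h}$, then the relations (\ref{inf-7-eqn})--(\ref{inf-10-eqn}) show precisely that the two associated cocycles differ by $\delta_\mathrm{MPL}((f, g))$, where $(f, g) \in C^{0|0}$ is regarded as an element of $C^1(\mathfrak{g}, \mathfrak{h}, \rho, \psi)$; hence they are cohomologous and $\Theta$ is constant on each equivalence class. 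Conversely, if the two cocycles are cohomologous, their difference equals $\delta_\mathrm{MPL}((f, g))$ for some $(f, g)$, and reading (\ref{inf-7-eqn})--(\ref{inf-10-eqn}) in reverse yields an equivalence of the underlying deformations. This establishes injectivity of the induced map on equivalence classes.

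For surjectivity I would invoke the correspondence recorded just before Theorem \ref{mc-mpl}: every element of $\mathcal{M}^1 = C^{1|0} \oplus C^{0|1}$ is of the form $(\mu_1 \ltimes \rho_1, \psi_1 \rtimes \nu_1)$ for a unique quadruple of bilinear maps with $\mu_1, \nu_1$ skew-symmetric. Given a class in $H^2_\mathrm{MPL}$, I would choose a representing $2$-cocycle; it corresponds to such a quadruple, and its being a cocycle means, via the established equivalence, exactly that (\ref{inf-1-eqn})--(\ref{inf-6-eqn}) hold. Thus the quadruple is an infinitesimal deformation whose image under $\Theta$ is the chosen class.

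The proof requires essentially no new computation, since every nontrivial identity has already been reduced to a Nijenhuis--Richardson bracket relation. The only point demanding care is the bookkeeping of the correspondence between deformation data and $\mathcal{M}^1$: one must keep the skew-symmetry of $\mu_1$ and $\nu_1$ in force so that $\Theta$ and the construction of its inverse are genuinely mutually inverse. This is the step I would state most explicitly, although it presents no real obstacle.
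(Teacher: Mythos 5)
Your proposal is correct and takes essentially the same route as the paper: the paper's proof of this theorem consists precisely of the preceding analysis (deformation axioms $\Longleftrightarrow$ the element $(\mu_1 \ltimes \rho_1, \psi_1 \rtimes \nu_1)$ is a $2$-cocycle; equivalence of deformations $\Longleftrightarrow$ the cocycles differ by $\delta_\mathrm{MPL}((f,g))$), with the theorem stated as an immediate consequence. The steps you spell out explicitly --- reading the equivalence relations in reverse for injectivity, and using the correspondence between $\mathcal{M}^1$ and quadruples of bilinear maps with $\mu_1, \nu_1$ skew-symmetric for surjectivity --- are left implicit in the paper but are exactly what its argument relies on.
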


\medskip

\subsection{Abelian extensions} In this subsection, we study abelian extensions of a given matched pair of Lie algebras by a given representation. We show that the isomorphism classes of all such abelian extensions can be classified by the second cohomology group of the matched pair of Lie algebras with coefficients in the representation.

Let $(\mathfrak{g}, \mathfrak{h}, \rho, \psi)$ be a matched pair of Lie algebras and let $(V, W)$ be a pair of vector spaces. Note that the quadruple $(V, W, 0, 0)$ can be regarded as a matched pair of Lie algebras with trivial Lie algebra structures on both $V$ and $W$.

\begin{defn}
    An {\em abelian extension} of a matched pair of Lie algebras $(\mathfrak{g}, \mathfrak{h}, \rho, \psi)$ by a pair of vector spaces $(V, W)$ is a short exact sequence 
\begin{align}\label{ab-ext}
\xymatrix{
0 \ar[r] & V \Join W \ar[r]^{i_1 \Join i_2}  &  \widehat{\mathfrak{g}} \Join \widehat{\mathfrak{h}} \ar[r]^{j_1 \Join j_2} & \mathfrak{g} \Join \mathfrak{h} \ar[r] & 0
}
\end{align}
of matched pairs of Lie algebras.
\end{defn}

Thus, an abelian extension is given by a new matched pair of Lie algebras $(\widehat{ \mathfrak{g}}, \widehat{\mathfrak{h}}, \widehat{\rho}, \widehat{\psi})$ equipped with morphisms $i_1 \Join i_2: V \Join W \rightarrow \widehat{\mathfrak{g}} \Join \widehat{\mathfrak{h}}$ and $j_1 \Join j_2:  \widehat{\mathfrak{g}} \Join \widehat{\mathfrak{h}} \rightarrow \mathfrak{g} \Join \mathfrak{h}$ of matched pairs of Lie algebras such that (\ref{ab-ext}) is a short exact sequence.

Consider an abelian extension as of (\ref{ab-ext}). A {\em section} of the map $j_1 \Join j_2:  \widehat{\mathfrak{g}} \Join \widehat{\mathfrak{h}} \rightarrow \mathfrak{g} \Join \mathfrak{h}$ is given by a pair $(s_1, s_2)$ of linear maps $s_1 : \mathfrak{g} \rightarrow \widehat{\mathfrak{g}}$ and $s_2 : \mathfrak{h} \rightarrow \widehat{\mathfrak{h}}$ satisfying $j_1 \circ s_1 = \mathrm{id}_\mathfrak{g}$ and $j_2 \circ s_2 = \mathrm{id}_\mathfrak{h}$. Let $(s_1, s_2)$ be any section of the map $j_1 \Join j_2$. Then we define maps
\begin{align*}
    \rho_V : \mathfrak{g} \times V \rightarrow V, \quad \rho_W : \mathfrak{g} \times W \rightarrow W, \quad \psi_V : \mathfrak{h} \times V \rightarrow V \quad \text{ and } \quad \psi_W : \mathfrak{h} \times W \rightarrow W
\end{align*}
respectively by
\begin{align*}
    (\rho_V)_x v :=~& [s_1(x) , i_1 (v)]_{\widehat{\mathfrak{g}}},   &&(\rho_W)_x w := {\widehat{\rho}}_{s_1 (x)} i_2 (w),\\
    (\psi_V)_h v :=~&  {\widehat{\psi}}_{s_2 (h)} i_1 (v),  &&(\psi_W)_h w := [s_2 (h), i_2 (w)]_{\widehat{\mathfrak{h}}},
\end{align*}
for $x \in \mathfrak{g}$, $h \in \mathfrak{h}$, $v \in V$ and $w \in W$. It is then easy to check that $\rho_V$ (resp. $\rho_W$) defines a representation of the Lie algebra $\mathfrak{g}$ on the vector space $V$ (resp. $W$). The map $\psi_V$ (resp. $\psi_W$) defines a representation of the Lie algebra $\mathfrak{h}$ on the vector space $V$ (resp. $W$). We also define maps $\alpha : V \times\mathfrak{h} \rightarrow W$ and $\beta : W \times \mathfrak{g} \rightarrow V$ by
\begin{align*}
    \alpha_v h := \widehat{\rho}_{i_1 (v)} s_2 (h) \quad \text{ and } \quad \beta_w x := \widehat{\psi}_{i_2 (w)} s_1 (x),
\end{align*}
for $v \in V$, $h \in \mathfrak{h}$, $w \in W$ and $x \in \mathfrak{g}$. Using the properties of the matched pair of Lie algebras $(\widehat{g}, \widehat{h}, \widehat{\rho}, \widehat{\psi})$, it is easy to prove the following result.

\begin{prop}\label{rep-prop}
    With the above notations, the quadruple $(V, W, \alpha, \beta)$ is a representation of $(\mathfrak{g}, \mathfrak{h}, \rho, \psi).$
\end{prop}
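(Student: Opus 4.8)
The plan is to avoid a term-by-term check of the six identities (\ref{1-iden})--(\ref{6-iden}) and instead recover $(V, W, \alpha, \beta)$ from an honest representation of the bicrossed product Lie algebra $\mathfrak{g} \Join \mathfrak{h}$, so that the matched pair representation axioms come for free via Remark \ref{remark-rep}. The starting observation is that since the matched pair $(V, W, 0, 0)$ carries trivial structures, the image $\mathfrak{a} := (i_1 \Join i_2)(V \Join W) = i_1(V) \oplus i_2(W)$ is an \emph{abelian ideal} of $\widehat{\mathfrak{g}} \Join \widehat{\mathfrak{h}}$, being the kernel of the surjective Lie algebra homomorphism $j_1 \Join j_2$; here $i_1(V) = \ker(j_1 : \widehat{\mathfrak{g}} \to \mathfrak{g})$ and $i_2(W) = \ker(j_2 : \widehat{\mathfrak{h}} \to \mathfrak{h})$ are themselves ideals. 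The adjoint action of $\widehat{\mathfrak{g}} \Join \widehat{\mathfrak{h}}$ restricts to a representation on $\mathfrak{a}$, and because $\mathfrak{a}$ is abelian it acts trivially on itself; hence this representation descends along $(\widehat{\mathfrak{g}} \Join \widehat{\mathfrak{h}})/\mathfrak{a} \cong \mathfrak{g} \Join \mathfrak{h}$ to a well-defined representation $\rho_\Join$ of $\mathfrak{g} \Join \mathfrak{h}$ on $\mathfrak{a} \cong V \oplus W$.

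First I would make $\rho_\Join$ explicit using the section $(s_1, s_2)$, setting $(\rho_\Join)_{(x,h)}(v, w) := (i_1 \Join i_2)^{-1}\big[ (s_1(x), s_2(h)), (i_1(v), i_2(w)) \big]_\Join$; section-independence of this formula is exactly the descent just described. Expanding the right-hand side with the bicrossed bracket (\ref{bicross}) for $(\widehat{\mathfrak{g}}, \widehat{\mathfrak{h}}, \widehat{\rho}, \widehat{\psi})$ produces the six terms $[s_1(x), i_1(v)]_{\widehat{\mathfrak{g}}}$, $\widehat{\psi}_{s_2(h)} i_1(v)$, $\widehat{\psi}_{i_2(w)} s_1(x)$ and $[s_2(h), i_2(w)]_{\widehat{\mathfrak{h}}}$, $\widehat{\rho}_{s_1(x)} i_2(w)$, $\widehat{\rho}_{i_1(v)} s_2(h)$. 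The key routine step is checking that each of these lands in the correct summand $i_1(V)$ or $i_2(W)$ of $\mathfrak{a}$: this follows from $i_1(V), i_2(W)$ being ideals together with the morphism identities (\ref{mpl-mor}) for $j_1 \Join j_2$ (for instance $j_1(\widehat{\psi}_{s_2(h)} i_1(v)) = \psi_h\, j_1 i_1(v) = 0$ forces $\widehat{\psi}_{s_2(h)} i_1(v) \in \ker j_1 = i_1(V)$). Reading off the summands and using the definitions of $\rho_V, \rho_W, \psi_V, \psi_W, \alpha, \beta$ then shows
\begin{align*}
(\rho_\Join)_{(x,h)}(v, w) = \big( (\rho_V)_x v + (\psi_V)_h v - \beta_w x,\ (\psi_W)_h w + (\rho_W)_x w - \alpha_v h \big),
\end{align*}
which is precisely the bicrossed action of Theorem \ref{vw-thm}.

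It then remains to invoke the converse Remark \ref{remark-rep}. Its hypotheses are met because the diagonal blocks of $\rho_\Join$ are the maps $\rho_V, \psi_V$ on $V$ and $\rho_W, \psi_W$ on $W$, which were already noted to be genuine representations of $\mathfrak{g}$ and $\mathfrak{h}$; moreover the off-diagonal projections $-\mathrm{pr}_2\big((\rho_\Join)_{(0,h)}(v,0)\big)$ and $-\mathrm{pr}_1\big((\rho_\Join)_{(x,0)}(0,w)\big)$ reproduce exactly the given $\alpha_v h$ and $\beta_w x$. Remark \ref{remark-rep} then yields directly that $(V, W, \alpha, \beta)$ is a representation of $(\mathfrak{g}, \mathfrak{h}, \rho, \psi)$. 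I expect the main (though still routine) obstacle to be the bookkeeping in the explicit bracket computation---correctly tracking signs and confirming, via (\ref{mpl-mor}), that every term lies in the intended ideal so that the identification $\mathfrak{a} \cong V \oplus W$ is legitimate. Once $\rho_\Join$ is shown to have the form of Theorem \ref{vw-thm}, the representation axioms are automatic and no direct verification of (\ref{1-iden})--(\ref{6-iden}) is required.
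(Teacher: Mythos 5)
Your argument is correct, but it takes a genuinely different route from the paper's. The paper's proof is the direct one: it asserts that the six identities (\ref{1-iden})--(\ref{6-iden}) of Definition \ref{defn-mpl-rep} follow from a term-by-term check using the matched-pair axioms of $(\widehat{\mathfrak{g}}, \widehat{\mathfrak{h}}, \widehat{\rho}, \widehat{\psi})$, and leaves that check to the reader. You instead argue structurally: $\mathfrak{a} = i_1(V)\oplus i_2(W) = \ker(j_1 \Join j_2)$ is an abelian ideal (abelian because $V \Join W$ carries trivial brackets and trivial actions), so the adjoint action of $\widehat{\mathfrak{g}} \Join \widehat{\mathfrak{h}}$ on $\mathfrak{a}$ kills $\mathfrak{a}$ itself and descends canonically to a representation of $\mathfrak{g} \Join \mathfrak{h}$ on $V \oplus W$; expanding the bracket (\ref{bicross}) of $\widehat{\mathfrak{g}} \Join \widehat{\mathfrak{h}}$ against the section, and using (\ref{mpl-mor}) for $j_1 \Join j_2$ to see that each of the six resulting terms lies in the intended summand, identifies the descended action with the block formula of Theorem \ref{vw-thm}; Remark \ref{remark-rep} then applies, and its recipe returns exactly the given $\alpha$ and $\beta$. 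The steps all check out: exactness of (\ref{ab-ext}) is componentwise because $i_1 \Join i_2$ and $j_1 \Join j_2$ are block-diagonal, section-independence is automatic from the descent, and the hypotheses of Remark \ref{remark-rep} hold because $V$ is $\mathfrak{g}$-invariant, $W$ is $\mathfrak{h}$-invariant, and the diagonal blocks $\rho_V, \psi_V, \rho_W, \psi_W$ are representations --- a fact you may cite from the paragraph preceding the proposition, or extract from your own construction as sub- and quotient representations. What your route buys: no six-identity verification, and the paper's subsequent computation that the induced structures are independent of the choice of section comes for free, since the descended action is canonical. What it costs: it rests on Remark \ref{remark-rep}, which the paper states without proof, so the identity-checking has not disappeared but has been relocated into that remark; a fully self-contained write-up would still have to prove the remark once.
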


Let $(s_1', s_2')$ be any other section of the map $j_1 \Join j_2$. Then we have
\begin{align*}
    s_1 (x) - s_1' (x) \in \mathrm{ker} (j_1) = \mathrm{im} (i_1) \quad \text{ and } \quad s_2 (h) - s_2'(h)  \in \mathrm{ker} (j_2) = \mathrm{im} (i_2),
\end{align*}
for all $x \in \mathfrak{g}$ and $h \in \mathfrak{h}.$ If $\rho_V', \rho_W', \psi_V', \psi_W', \alpha' , \beta'$ are the corresponding structures on $V$ and $W$ induced by the section $(s_1', s_2')$, then we have
\begin{align*}
    (\rho_V)_x v - (\rho'_V)_x v =~& [ s_1 (x) - s_1' (x) , i_1 (v)]_{\widehat{\mathfrak{g}}} = 0,\\
    (\rho_W)_x w  - (\rho'_W)_x w =~& \widehat{\rho}_{s_1 (x) - s_1' (x)} i_2 (w) = 0,\\
    (\psi_V)_h v - (\psi'_V)_h v =~& \widehat{\psi}_{ s_2 (h) - s_2' (h)} i_1 (v) = 0,\\
    (\psi_W)_h w - (\psi'_W)_h w =~& [s_2 (h) - s_2' (h), i_2 (w)]_{\widehat{\mathfrak{h}}} = 0,\\
    \alpha_v h - \alpha'_v h =~& \widehat{\rho}_{i_1 (v)} (s_2 (h) - s_2'(h)) = 0,\\
    \beta_w x - \beta'_w x =~& \widehat{\psi}_{i_2 (w)} (s_1 (x) - s_1'(x)) = 0.
\end{align*}
This shows that the representation given in Proposition \ref{rep-prop} is independent of the choice of the section.

\begin{defn}
    Two abelian extensions (two rows of the diagram (\ref{abel-diag})) are said to be {\em isomorphic} if there exists an isomorphism $f \Join g : \widehat{\mathfrak{g}} \Join \widehat{\mathfrak{h}} \rightarrow \widehat{\mathfrak{g}}' \Join \widehat{\mathfrak{h}}'$ of matched pairs of Lie algebras making the below diagram commutative
    \begin{align}\label{abel-diag}
        \xymatrix{
0 \ar[r] & V \Join W \ar@{=}[d] \ar[r]^{i_1 \Join i_2}  &  \widehat{\mathfrak{g}} \Join \widehat{\mathfrak{h}} \ar[d]^{f \Join g} \ar[r]^{j_1 \Join j_2} & \mathfrak{g} \Join \mathfrak{h} \ar[r] \ar@{=}[d] & 0 \\
0 \ar[r] & V \Join W \ar[r]_{i_1' \Join i_2'}  &  \widehat{\mathfrak{g}}' \Join \widehat{\mathfrak{h}}' \ar[r]_{j_1' \Join j_2'} & \mathfrak{g} \Join \mathfrak{h} \ar[r] & 0.
}
    \end{align}
\end{defn}

Let $(\mathfrak{g}, \mathfrak{h}, \rho, \psi)$ be a matched pair of Lie algebras and $(V, W, \alpha, \beta)$ be a given representation of it. We define $\mathrm{Ext} (\mathfrak{g}, \mathfrak{h}; V, W)$ to be the set of all isomorphism classes of abelian extensions of $(\mathfrak{g}, \mathfrak{h}, \rho, \psi)$ by the pair of vector spaces $(V, W)$ for which the induced representation is the prescribed one. Then we have the following result.

\begin{thm}
    Let $(\mathfrak{g}, \mathfrak{h}, \rho, \psi)$ be a matched pair of Lie algebras and $(V, W, \alpha, \beta)$ be a representation of it. Then
    \begin{align*}
        \mathrm{Ext} (\mathfrak{g}, \mathfrak{h}; V, W) \cong H^2_\mathrm{MPL} (\mathfrak{g}, \mathfrak{h}; V, W).
    \end{align*}
\end{thm}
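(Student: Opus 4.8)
The plan is to construct explicit, mutually inverse maps between the set $\mathrm{Ext}(\mathfrak{g}, \mathfrak{h}; V, W)$ and the cohomology group $H^2_\mathrm{MPL}(\mathfrak{g}, \mathfrak{h}; V, W)$, following the standard cohomological classification of extensions but adapted to the bidegree-graded cochain complex of the previous section. The key observation is that a section $(s_1, s_2)$ identifies $\widehat{\mathfrak{g}} \cong \mathfrak{g} \oplus V$ and $\widehat{\mathfrak{h}} \cong \mathfrak{h} \oplus W$ as vector spaces, under which the four structure maps of the extension $(\widehat{\mathfrak{g}}, \widehat{\mathfrak{h}}, \widehat{\rho}, \widehat{\psi})$ decompose into the prescribed representation data $(V, W, \alpha, \beta)$ together with four \emph{factor set} maps that assemble into a single $2$-cochain.

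First, given an abelian extension and a section $(s_1, s_2)$ of $j_1 \Join j_2$, I would define the deviation maps
\begin{align*}
F_\mathfrak{g}(x, y) &:= i_1^{-1}\big( [s_1(x), s_1(y)]_{\widehat{\mathfrak{g}}} - s_1([x, y]_\mathfrak{g}) \big), & F_\mathfrak{h}(h, k) &:= i_2^{-1}\big( [s_2(h), s_2(k)]_{\widehat{\mathfrak{h}}} - s_2([h, k]_\mathfrak{h}) \big), \\
F_\rho(x, h) &:= i_2^{-1}\big( \widehat{\rho}_{s_1(x)} s_2(h) - s_2(\rho_x h) \big), & F_\psi(h, x) &:= i_1^{-1}\big( \widehat{\psi}_{s_2(h)} s_1(x) - s_1(\psi_h x) \big).
\end{align*}
Since $j_1 \circ s_1 = \mathrm{id}_\mathfrak{g}$ and $j_2 \circ s_2 = \mathrm{id}_\mathfrak{h}$, each deviation lies in $\mathrm{ker}(j_1) = \mathrm{im}(i_1)$ or $\mathrm{ker}(j_2) = \mathrm{im}(i_2)$, so the maps are well defined. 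Setting $F_1 := (F_\mathfrak{g}, F_\rho) \in C^{1|0}(\mathfrak{g}, \mathfrak{h}; V, W)$ and $F_2 := (F_\psi, F_\mathfrak{h}) \in C^{0|1}(\mathfrak{g}, \mathfrak{h}; V, W)$ produces a $2$-cochain $F = (F_1, F_2)$. The crucial step is to verify that the six defining axioms of the matched pair $(\widehat{\mathfrak{g}}, \widehat{\mathfrak{h}}, \widehat{\rho}, \widehat{\psi})$---the two Jacobi identities, the two representation conditions, and the two compatibility conditions (\ref{11})--(\ref{22})---translate, after substituting the section decomposition and using that $(V, W, \alpha, \beta)$ is the induced representation (Proposition \ref{rep-prop}), exactly into the equation $\delta_\mathrm{MPL}(F) = 0$ with $\delta_\mathrm{MPL}$ as in (\ref{exp-diff-co}). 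This parallels the computation already carried out for infinitesimal deformations, where $(\mu_1 \ltimes \rho_1, \psi_1 \rtimes \nu_1)$ was shown to be a $2$-cocycle, so I expect the same bookkeeping to apply with $V, W$ in place of $\mathfrak{g}, \mathfrak{h}$.

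Next, I would check well-definedness. A second section $(s_1', s_2')$ differs from $(s_1, s_2)$ by maps $f := i_1^{-1}(s_1 - s_1') : \mathfrak{g} \to V$ and $g := i_2^{-1}(s_2 - s_2') : \mathfrak{h} \to W$, and a direct computation shows the resulting cochains differ by $\delta_\mathrm{MPL}((f, g))$, where $(f, g) \in C^{0|0}$ is viewed as an element of $C^1(\mathfrak{g}, \mathfrak{h}; V, W)$; this is again of the same form as the equivalence relation (\ref{inf-7-eqn})--(\ref{inf-10-eqn}) of deformations. Similarly, an isomorphism of extensions as in (\ref{abel-diag}) induces, via any pair of sections, cohomologous cocycles. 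Hence the assignment descends to a well-defined map $\mathrm{Ext}(\mathfrak{g}, \mathfrak{h}; V, W) \to H^2_\mathrm{MPL}(\mathfrak{g}, \mathfrak{h}; V, W)$.

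Conversely, given a $2$-cocycle $F = (F_1, F_2)$ with components $(F_\mathfrak{g}, F_\rho, F_\psi, F_\mathfrak{h})$, I would build an extension by endowing $\widehat{\mathfrak{g}} := \mathfrak{g} \oplus V$ and $\widehat{\mathfrak{h}} := \mathfrak{h} \oplus W$ with brackets and with maps $\widehat{\rho}, \widehat{\psi}$ twisted by the cocycle, for instance $[(x, v), (y, v')]_{\widehat{\mathfrak{g}}} := \big([x, y]_\mathfrak{g}, (\rho_V)_x v' - (\rho_V)_y v + F_\mathfrak{g}(x, y)\big)$ and analogously for the other three structures. The cocycle condition $\delta_\mathrm{MPL}(F) = 0$ is precisely what makes $(\widehat{\mathfrak{g}}, \widehat{\mathfrak{h}}, \widehat{\rho}, \widehat{\psi})$ a genuine matched pair of Lie algebras, and the obvious inclusions and projections then exhibit it as an abelian extension inducing the prescribed representation. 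The main obstacle is exactly this equivalence between the matched-pair axioms and $\delta_\mathrm{MPL}(F) = 0$: one must expand all six axioms in the split coordinates and match the resulting terms against the four explicit component formulas of (\ref{exp-diff-co}), keeping careful track of signs and of whether each term involves the pairing maps $\alpha, \beta$ or the actions $\rho_V, \rho_W, \psi_V, \psi_W$. Finally, I would confirm that these two constructions are mutually inverse up to the chosen equivalences, yielding the claimed bijection $\mathrm{Ext}(\mathfrak{g}, \mathfrak{h}; V, W) \cong H^2_\mathrm{MPL}(\mathfrak{g}, \mathfrak{h}; V, W)$.
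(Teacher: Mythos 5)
Your proposal is correct and follows essentially the same route as the paper's proof: extracting the factor-set $2$-cocycle $(F_1,F_2)$ from a section, checking independence of the section and invariance under isomorphism, and inverting via the cocycle-twisted structure on $\mathfrak{g}\oplus V$ and $\mathfrak{h}\oplus W$. The only cosmetic difference is a sign convention on the $\psi$-component (the paper sets $F_2(x,h):=-\widehat{\psi}_{s_2(h)}s_1(x)+s_1(\psi_h x)$, the opposite of your $F_\psi$), which is absorbed by the corresponding sign in the reverse construction.
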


\begin{proof}
    Let (\ref{ab-ext}) be an abelian extension. For any section $(s_1, s_2)$ of the map $j_1 \Join j_2$, we define maps
    \begin{align*}
        F_1 \in  C^{1|0}(\mathfrak{g}, \mathfrak{h}; V, W) ~~~ \text{ and } ~~~
        F_2  \in  C^{0|1} (\mathfrak{g}, \mathfrak{h}; V, W)
    \end{align*}
    by
    \begin{align*}
        \begin{cases}
            F_1 (x, y) := [s_1(x) , s_1 (y)]_{\widehat{\mathfrak{g}}} - s_1 ([x, y]_\mathfrak{g}),\\
            F_1 (x, h) := \widehat{\rho}_{s_1 (x)} s_2 (h) - s_2 (\rho_x h),
        \end{cases}
        \begin{cases}
            F_2 (x, h) := - \widehat{\psi}_{s_2 (h)} s_1(x) + s_1 (\psi_h x),\\
            F_2 (h, k) := [s_2(h) , s_2 (k)]_{\widehat{\mathfrak{h}}} - s_2 ([h, k]_\mathfrak{h}),
        \end{cases}
    \end{align*}
    for $x, y \in \mathfrak{g}$ and $h, k \in \mathfrak{h}$.
    Then by a straightforward calculation, it is easy to verify that the element $(F_1, F_2) \in C^2_\mathrm{MPL} (\mathfrak{g}, \mathfrak{h}; V, W)$ is a $2$-cocycle (i.e. $\delta_\mathrm{MPL} ((F_1, F_2)) = 0$). Hence the abelian extension (\ref{ab-ext}) gives rise to a cohomology class in $H^2_\mathrm{MPL} (\mathfrak{g}, \mathfrak{h}; V, W)$. Moreover, the cohomology class does not depend on the choice of the section.

    Next, we consider two equivalent abelian extensions as of (\ref{abel-diag}). If $(s_1, s_2)$ is any section of the map $j_1 \Join j_2$, then we have $j_1' \circ (f \circ s_1) = j_1 \circ s_1 = \mathrm{id}_\mathfrak{g}$ and $j_2' \circ (g \circ s_2) = j_2 \circ s_2 = \mathrm{id}_\mathfrak{h}$.
    This shows that $(f \circ s_1, g \circ s_2)$ is a section of the map $j_1' \Join j_2'$. If $(F_1' , F_2')$ is the $2$-cocycle corresponding to the second abelian extension and its section $(f \circ s_1, g \circ s_2)$ then 
    \begin{align*}
        F_1' (x, y) =~& [ (f \circ s_1)(x), (f \circ s_1)(y)]_{\widehat{\mathfrak{g}}'} - (f \circ s_1) ([x, y]_\mathfrak{g})  \\
        =~& f \big(  [s_1(x), s_1 (y)]_{\widehat{\mathfrak{g}}} - s_1 ([x, y]_\mathfrak{g}) \big) \\
        =~&  [s_1(x), s_1 (y)]_{\widehat{\mathfrak{g}}} - s_1 ([x, y]_\mathfrak{g}) \quad (\because ~ f|_V = \mathrm{id}_V) \\
        =~& F_1(x, y).
    \end{align*}
    Similarly, $F_1' (x, h) = F_1 (x, h)$,  $F_2' (x, h) = F_2 (x, h)$ and $F_2' (h, k) = F_2 (h, k)$.
    Hence we have $(F_1, F_2) = (F_1', F_2')$ and thus they corresponds to the same element in $H^2_\mathrm{MPL} (\mathfrak{g}, \mathfrak{h}; V, W)$. Thus, we obtain a well-defined map 
\begin{align*}
    \Theta_1 : \mathrm{Ext}(\mathfrak{g}, \mathfrak{h}; V, W) \rightarrow H^2_\mathrm{MPL} (\mathfrak{g}, \mathfrak{h}; V, W).
\end{align*}

To define a map on the other direction, we start with a $2$-cocycle $(F_1, F_2) \in Z^2_\mathrm{MPL} (\mathfrak{g}, \mathfrak{h}; V, W)$. Take $\widehat{\mathfrak{g}} = \mathfrak{g} \oplus V$ and $\widehat{\mathfrak{h}} = \mathfrak{h} \oplus W$. Then we define maps
\begin{align*}
    [~,~]_{\widehat{\mathfrak{g}}} : \widehat{\mathfrak{g}} \times \widehat{\mathfrak{g}} \rightarrow \widehat{\mathfrak{g}}, \quad  [~,~]_{\widehat{\mathfrak{h}}} : \widehat{\mathfrak{h}} \times \widehat{\mathfrak{h}} \rightarrow \widehat{\mathfrak{h}}, \quad \widehat{\rho} : \widehat{\mathfrak{g}} \times \widehat{\mathfrak{h}} \rightarrow \widehat{\mathfrak{h}} \quad \text{ and } \quad \widehat{\psi} : \widehat{\mathfrak{h}} \times \widehat{\mathfrak{g}} \rightarrow \widehat{\mathfrak{g}}
\end{align*}
by
\begin{align*}
    [(x, u), (y, v)]_{\widehat{\mathfrak{g}}} :=~& \big( [x, y]_\mathfrak{g}, (\rho_V)_x v - (\rho_V)_y u + F_1 (x, y)  \big), \\
    [(h, w), (k, w')]_{\widehat{\mathfrak{h}}} :=~& \big(  [h, k]_\mathfrak{h}, (\psi_W)_h w' - (\psi_W)_k w + F_2 (h, k)  \big),\\
    \widehat{\rho}_{(x, u)} (h, w) :=~& \big( \rho_x h, (\rho_W)_x w + \alpha_u h + F_1 (x, h)   \big), \\
    \widehat{\psi}_{(h, w)} (x, u) :=~& \big(  \psi_h x, (\psi_V)_h u + \beta_w x - F_2 (x, h)  \big),
\end{align*}
for $(x, u), (y, v) \in \widehat{\mathfrak{g}}$ and $(h, w) , (k, w') \in \widehat{\mathfrak{h}}$. Then it follows that $(\widehat{\mathfrak{g}}, [~,~]_{\widehat{\mathfrak{g}}} )$ and $(\widehat{\mathfrak{h}}, [~,~]_{\widehat{\mathfrak{h}}} )$ are both Lie algebras. The map $\widehat{\rho}$ (resp. $\widehat{\psi}$) defines a representation of the Lie algebra $\widehat{\mathfrak{g}}$ on the vector space $\widehat{\mathfrak{h}}$ (resp. representation of the Lie algebra $\widehat{\mathfrak{h}}$ on the vector space $\widehat{\mathfrak{g}}$). Additionally, the maps $\widehat{\rho}$ and $\widehat{\psi}$ satisfy the compatibility conditions (\ref{11}), (\ref{22}). In other words, $(\widehat{\mathfrak{g}}, \widehat{\mathfrak{h}}, \widehat{\rho}, \widehat{\psi})$ is a matched pair of Lie algebras. Moreover, 
\begin{align*}
\xymatrix{
0 \ar[r] & V \Join W \ar[r]^{i_1 \Join i_2}  &  \widehat{\mathfrak{g}} \Join \widehat{\mathfrak{h}} \ar[r]^{j_1 \Join j_2} & \mathfrak{g} \Join \mathfrak{h} \ar[r] & 0
}
\end{align*}
is an abelian extension, where the above maps are given by $i_1 (v) = (0, v)$, $i_2 (w) = (0, w),$ $j_1 (x, v) = x$ and $j_2 (h, w) = h.$

Next, let $(F_1', F_2')$ be another $2$-cocycle cohomologous to $(F_1, F_2)$, say $(F_1, F_2) - (F_1' , F_2') = \delta_\mathrm{MPL} (\theta, \vartheta)$, for some $(\theta , \vartheta) \in C^1_\mathrm{MPL} (\mathfrak{g}, \mathfrak{h}; V, W) = \mathrm{Hom} (\mathfrak{g}, V) \oplus \mathrm{Hom} (\mathfrak{h}, W)$. We define two linear maps $f : \mathfrak{g} \oplus V \rightarrow \mathfrak{g} \oplus V$ and $g : \mathfrak{h} \oplus W \rightarrow \mathfrak{h} \oplus W$ by
\begin{align*}
    f (x, v) = (x, v + \theta (x)) \quad \text{ and } \quad g (h, w) = (h, w + \vartheta (h)), 
\end{align*}
for $(x, v) \in \mathfrak{g} \oplus V$ and $(h, w) \in \mathfrak{h} \oplus W$. Then it is easy to verify that the pair of maps $(f, g)$ defines an isomorphism of the abelian extensions from $(\widehat{\mathfrak{g}}, \widehat{\mathfrak{h}}, \widehat{\rho}, \widehat{\psi})$ to $(\widehat{\mathfrak{g}}', \widehat{\mathfrak{h}}', \widehat{\rho}', \widehat{\psi}')$. Hence there is a well-defined map $\Theta_2 : H^2_\mathrm{MPL} (\mathfrak{g}, \mathfrak{h}; V, W) \rightarrow \mathrm{Ext} (\mathfrak{g}, \mathfrak{h}; V, W)$. Finally, one can check that the maps $\Theta_1$ and $\Theta_2$ are inverses to each other. This completes the proof.
\end{proof}

\section{Matched pairs of $L_\infty$-algebras}\label{sec6}

In this section, we first recall $L_\infty$-algebras (also called strongly homotopy Lie algebras) and their representations. Next, we define the notion of a matched pair of $L_\infty$-algebras. We show that a strict Rota-Baxter operator of weight $1$ on an $L_\infty$-algebra $(\mathcal{G} , \{ \mu_k \}_{k \geq 1})$ with $\mu_{k \geq 4} =0$ gives rise to a matched pair of $L_\infty$-algebras. Given a Lie algebra, the Nijenhuis-Richardson algebra and the cup-product graded Lie algebra form a matched pair of graded Lie algebras. Next, we construct a graded Lie algebra (generalizing the graded Lie algebra of Proposition \ref{prop-gla}) whose Maurer-Cartan elements correspond to matched pairs of $L_\infty$-algebras. In the end, we also construct the bicrossed product for a matched pair of $L_\infty$-algebras.

\begin{defn}
    An {\em $L_\infty$-algebra} is a pair $(\mathcal{G}, \{ \mu_k \}_{k \geq 1})$ consisting of a graded vector space $\mathcal{G} = \oplus_{i \in \mathbb{Z}} \mathcal{G}^i$ equipped with a collection $\{ \mu_k \in \mathrm{Hom} (\mathcal{G}^{\otimes k}, \mathcal{G}) \}_{k \geq 1}$ of graded linear maps with $\mathrm{deg} (\mu_k) = k-2$ (for $k \geq 1$) subject to satisfy the following conditions
\begin{itemize}
    \item each map $\mu_k$ is graded skew-symmetric in the sense that
    \begin{align*}
        \mu_k (x_1, \ldots, x_k) = (-1)^\sigma \epsilon (\sigma) \mu_k ( x_{\sigma (1)}, \ldots, x_{\sigma (k)}), \text{ for all } \sigma \in \mathbb{S}_k,
    \end{align*}
    \item for any $N \geq 1$ and homogeneous elements $x_1, \ldots, x_N \in \mathcal{G}$, the following identity holds:
    \begin{align*}
      \sum_{k +l = N+1} \sum_{\sigma \in \mathrm{Sh}(l, k-1)} (-1)^\sigma \epsilon (\sigma) (-1)^{l (k-1)} ~\mu_k \big( \mu_l (  x_{\sigma (1)}, \ldots, x_{\sigma (l)} ), x_{\sigma (l+1)}, \ldots, x_{\sigma (N)}  \big) = 0. 
    \end{align*}
\end{itemize}
\end{defn}

\begin{remark}
    Any Lie algebra can be seen as an $L_\infty$-algebra whose underlying graded vector space is concentrated in degree $0$. Any differential graded Lie algebra is also an $L_\infty$-algebra in which $\mu_k = 0$ for $k \geq 3$.
\end{remark}

Let $\mathcal{G} = \oplus_{i \in \mathbb{Z}} \mathcal{G}^i$ be a graded vector space. Consider the shifted space $\mathcal{G}[-1] = \oplus_{i \in \mathbb{Z}} (\mathcal{G}[-1])^i$, where $(\mathcal{G}[-1])^i = \mathcal{G}^{i-1}$ for each $i \in \mathbb{Z}$.
For any $n \in \mathbb{Z}$ and $k \geq 1$, we set 
\begin{align*}
\mathrm{Hom}_\mathrm{sym}^n ( \mathcal{G}[-1]^{\otimes k}, \mathcal{G}[-1] ) = \{  \theta \in \mathrm{Hom} (\mathcal{G}[-1]^{\otimes k}, \mathcal{G}[-1]) \big| ~ \substack{\theta \text{ is graded symmetric,}\\ \mathrm{deg}(\theta) = n }  \}.
\end{align*}
Then for any $k \geq 1$, there is a one-to-one correspondence between the sets
\begin{align*}
    \big\{  \mu \in \mathrm{Hom} (\mathcal{G}^{\otimes k}, \mathcal{G}) \big|& ~\substack{\mu \text{ is graded skew-symmetric,} \\ \mathrm{deg}(\mu) = k-2} \big\} \longleftrightarrow \mathrm{Hom}_\mathrm{sym}^{-1} ( \mathcal{G}[-1]^{\otimes k}, \mathcal{G}[-1] ), ~ \mu \longleftrightarrow \widetilde{\mu}.
\end{align*}
The correspondence is given by $\widetilde{\mu} = (-1)^{\frac{k(k-1)}{2}} s \circ \mu \circ (s^{-1})^{\otimes k}$, where $s : \mathcal{G} \rightarrow \mathcal{G}[-1]$ is the degree $1$ map that identifies $\mathcal{G}^{i-1}$ with $(\mathcal{G}[-1])^i$. 
We define 
\begin{align}\label{gr-gr}
\mathrm{Hom}^n_\mathrm{sym} (\mathcal{G}[-1]) := \oplus_{k \geq 1} \mathrm{Hom}_\mathrm{sym}^n ( \mathcal{G}[-1]^{\otimes k}, \mathcal{G}[-1] ).
\end{align}
Thus, an element in $\mathrm{Hom}^n_\mathrm{sym} (\mathcal{G}[-1])$ is given by a sum $\sum_{k \geq 1} \theta_k$, where $\theta_k \in \mathrm{Hom}_\mathrm{sym}^n ( \mathcal{G}[-1]^{\otimes k}, \mathcal{G}[-1] )$. For any $m, n \in \mathbb{Z}$, there is a bracket (generalizing the Nijenhuis-Richardson bracket (\ref{nr-br}))
\begin{align*}
    \llbracket ~, ~ \rrbracket_\mathrm{NR} : \mathrm{Hom}^m_\mathrm{sym} (\mathcal{G}[-1]) \times \mathrm{Hom}^n_\mathrm{sym} (\mathcal{G}[-1]) \rightarrow \mathrm{Hom}^{m+n}_\mathrm{sym} (\mathcal{G}[-1])
\end{align*}
given by
\begin{align*}
    &\llbracket \sum_{k \geq 1} \theta_k , \sum_{l \geq 1} \eta_l \rrbracket_\mathrm{NR} = \sum_{p \geq 1} \sum_{k+l = p+1} ( i_{\theta_k}  \eta_l - (-1)^{mn} i_{\eta_l}  \theta_k), ~~ \text{ where }\\
    (i_{\theta_k} \eta_l) &(x_1, \ldots, x_{k+l-1}) := \sum_{\sigma \in \mathrm{Sh} (k, l-1)}\epsilon (\sigma) \eta_l \big( \theta_k (x_{\sigma (1)}, \ldots, x_{\sigma (k)}), x_{\sigma (k+1)}, \ldots, x_{\sigma (k+l-1)}  \big),
\end{align*}
for $x_1, \ldots, x_{k+l-1} \in \mathcal{G}[-1]$. Then we have the following.

\begin{thm}\label{l-inf-mc}
    Let $\mathcal{G}$ be a graded vector space. 

    (i) Then $( \oplus_{n \in \mathbb{Z}} \mathrm{Hom}^n_\mathrm{sym} (\mathcal{G}[-1]), \llbracket ~,~ \rrbracket_\mathrm{NR}) $ is a graded Lie algebra.

    (ii) Let $\big\{  \mu_k \in \mathrm{Hom} (\mathcal{G}^{\otimes k}, \mathcal{G}) \big| ~\substack{\mu_k \mathrm{~ is~graded~ skew-symmetric,} \\ \mathrm{deg}(\mu_k) = k-2} \big\}_{k \geq 1}$ be a collection of graded skew-symmetric maps with specific degrees. Consider the corresponding collection $\big\{  \widetilde{\mu_k} \in \mathrm{Hom}^{-1}_\mathrm{sym} (\mathcal{G}[-1]^{\otimes k}, \mathcal{G}[-1]) \big\}_{k \geq 1}$. Then $(\mathcal{G}, \{ \mu_k \}_{k \geq 1})$ is an $L_\infty$-algebra if and only if $\sum_{k \geq 1} \widetilde{\mu_k} \in \mathrm{Hom}^{-1}_\mathrm{sym} (\mathcal{G}[-1])$ is a Maurer-Cartan element of the graded Lie algebra $( \oplus_{n \in \mathbb{Z}} \mathrm{Hom}^n_\mathrm{sym} (\mathcal{G}[-1]), \llbracket ~,~ \rrbracket_\mathrm{NR}) $.
\end{thm}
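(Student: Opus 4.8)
The plan is to prove both statements through the standard cofree-coalgebra (coderivation) model, which makes the graded Jacobi identity automatic and reduces the Maurer--Cartan characterization to a d\'ecalage sign computation. For part (i), I would identify $\mathrm{Hom}^n_\mathrm{sym}(\mathcal{G}[-1])$ with the space of degree-$n$ coderivations of the cofree conilpotent cocommutative coalgebra $S^c(\mathcal{G}[-1]) = \oplus_{k \geq 1} \mathrm{Sym}^k(\mathcal{G}[-1])$. Concretely, a collection $\theta = \sum_{k \geq 1}\theta_k$ with $\theta_k \in \mathrm{Hom}^n_\mathrm{sym}(\mathcal{G}[-1]^{\otimes k}, \mathcal{G}[-1])$ extends uniquely to a coderivation $D_\theta$ by the cofreeness property $\mathrm{Coder}(S^c(\mathcal{G}[-1])) \cong \mathrm{Hom}(S^c(\mathcal{G}[-1]), \mathcal{G}[-1])$, and $\theta \mapsto D_\theta$ is a degree-preserving linear isomorphism. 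Since the graded commutator $[D_\theta, D_\eta] := D_\theta D_\eta - (-1)^{mn} D_\eta D_\theta$ of two coderivations is again a coderivation, and since graded commutators always satisfy the graded Jacobi identity, the coderivation space is a graded Lie algebra under this bracket. It then suffices to check that the transported bracket coincides with $\llbracket -, - \rrbracket_\mathrm{NR}$; this follows by composing $[D_\theta, D_\eta]$ with the corestriction $\mathrm{pr}_1 : S^c(\mathcal{G}[-1]) \to \mathcal{G}[-1]$, which extracts precisely the shuffle sum $\sum_{k+l=p+1}(i_{\theta_k}\eta_l - (-1)^{mn} i_{\eta_l}\theta_k)$ defining the bracket.

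For part (ii), write $Q = \sum_{k\geq 1}\widetilde{\mu_k} \in \mathrm{Hom}^{-1}_\mathrm{sym}(\mathcal{G}[-1])$. Because $Q$ has odd degree $m = n = -1$, the self-bracket simplifies to $\llbracket Q, Q\rrbracket_\mathrm{NR} = 2\sum_{p}\sum_{k+l=p+1} i_{\widetilde{\mu_k}}\widetilde{\mu_l}$, so the Maurer--Cartan equation $\llbracket Q, Q\rrbracket_\mathrm{NR} = 0$ is equivalent to the vanishing, for each arity $N$, of $\sum_{k+l=N+1}\sum_{\sigma \in \mathrm{Sh}(k, l-1)}\epsilon(\sigma)\, \widetilde{\mu_l}(\widetilde{\mu_k}(x_{\sigma(1)}, \ldots, x_{\sigma(k)}), x_{\sigma(k+1)}, \ldots, x_{\sigma(N)})$. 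I would then transport this identity back to $\mathcal{G}$ along the d\'ecalage correspondence $\mu_k \leftrightarrow \widetilde{\mu_k} = (-1)^{k(k-1)/2}\, s \circ \mu_k \circ (s^{-1})^{\otimes k}$, using the relation between the graded symmetric Koszul sign $\epsilon(\sigma)$ on $\mathcal{G}[-1]$ and the graded skew-symmetric sign $(-1)^\sigma \epsilon(\sigma)$ on $\mathcal{G}$. Tracking the shift factors converts the shuffle sign and the two d\'ecalage factors $(-1)^{k(k-1)/2}$ into exactly the sign $(-1)^{l(k-1)}$ and the reindexed shuffle set $\mathrm{Sh}(l, k-1)$ appearing in the $L_\infty$ defining identity, so that the $N$-th Maurer--Cartan component becomes a nonzero scalar multiple of the $N$-th $L_\infty$ relation. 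Hence $\sum_{k\geq 1} \widetilde{\mu_k}$ is Maurer--Cartan if and only if $(\mathcal{G}, \{\mu_k\}_{k\geq 1})$ is an $L_\infty$-algebra.

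The main obstacle is the sign reconciliation in part (ii): one must verify that the Koszul signs governing graded symmetric maps on $\mathcal{G}[-1]$, the d\'ecalage factors $(-1)^{k(k-1)/2}$, and the shuffle signs combine to reproduce precisely the sign $(-1)^{l(k-1)}$ and the reindexing $\mathrm{Sh}(k,l-1) \leftrightarrow \mathrm{Sh}(l, k-1)$ in the stated $L_\infty$ axiom. This is the classical \emph{d\'ecalage lemma}, and while conceptually routine it requires careful bookkeeping of the Koszul rule under the shift $s$; once it is in place, both parts follow at once. One could alternatively prove (i) by a direct verification of the graded Jacobi identity for $\llbracket -, -\rrbracket_\mathrm{NR}$ through a shuffle-regrouping argument, bypassing the coalgebra language entirely, but the coderivation model makes the Jacobi identity automatic and keeps the argument short.
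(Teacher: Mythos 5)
There is nothing on the paper's side to compare against: Theorem \ref{l-inf-mc} is stated without any proof, as the standard graded generalization of the Nijenhuis--Richardson theorem (the Maurer--Cartan description of $L_\infty$-structures), and the text moves straight on to representations. So your proposal supplies an argument the authors omit, and the coderivation route you chose is the standard and correct way to do it.

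Two caveats before it is complete. First, a composition-order slip in part (i): with the paper's convention that $i_f g$ places $f$ \emph{inside} $g$, and with $\mathrm{pr}_1 \circ D_\theta = \theta$, one computes $\mathrm{pr}_1 \circ D_\theta \circ D_\eta = i_\eta \theta$ (the coderivation applied first is the one that ends up inside), so the corestriction of $[D_\theta, D_\eta]$ is $i_\eta \theta - (-1)^{mn} i_\theta \eta = \llbracket \eta, \theta \rrbracket_\mathrm{NR}$, not $\llbracket \theta, \eta \rrbracket_\mathrm{NR}$ as you claim; the map $\theta \mapsto D_\theta$ reverses the bracket. This is harmless --- the reverse of a graded Lie bracket is again a graded Lie bracket, so part (i) still follows, and in part (ii) the discrepancy even disappears because the reversal sign $-(-1)^{mn}$ equals $+1$ when $m=n=-1$ --- but as literally written the step ``the transported bracket coincides with $\llbracket -,- \rrbracket_\mathrm{NR}$'' is false and should be corrected or the reversal acknowledged. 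Second, in part (ii) the entire content of the theorem is the d\'ecalage bookkeeping that matches $\epsilon(\sigma)$, the factors $(-1)^{k(k-1)/2}$, and $\mathrm{Sh}(k,l-1)$ against the paper's signs $(-1)^\sigma \epsilon(\sigma)(-1)^{l(k-1)}$ and shuffles $\mathrm{Sh}(l,k-1)$; you assert this as the ``classical d\'ecalage lemma'' without carrying it out. A self-contained proof must either do that computation once or cite a source where it is done (e.g.\ Lada--Markl); with that in place, and noting that $D_Q^2 = \tfrac{1}{2}[D_Q,D_Q]$ is itself a coderivation so that $\mathrm{pr}_1 D_Q^2 = 0$ if and only if $D_Q^2 = 0$, your argument closes.
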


Let $( \mathcal{G}, \{ \mu_k \}_{k \geq 1})$ be an $L_\infty$-algebra. A {\em representation} of $( \mathcal{G}, \{ \mu_k \}_{k \geq 1})$ is a graded vector space $\mathcal{H} = \oplus_{i \in \mathbb{Z}} \mathcal{H}^i$ equipped with a collection $\{   \rho_k \in \mathrm{Hom} (\mathcal{G}^{\otimes k-1} \otimes \mathcal{H} , \mathcal{H})  \}_{k \geq 1}$ of graded linear maps with $\mathrm{deg} (\rho_k ) = k -2$ (for $k \geq 1$) that satisfy the following conditions:

- for each $k \geq 1$, the map $\rho_k$ is graded skew-symmetric on the inputs of $\mathcal{G}$, i.e.
\begin{align*}
    \rho_k (x_1, \ldots, x_{k-1}, h) = (-1)^\sigma \epsilon (\sigma) \rho_k ( x_{\sigma (1)}, \ldots, x_{\sigma (k-1)} , h), \text{ for all } \sigma \in \mathbb{S}_{k-1},
\end{align*}

- for each $N \geq 1$ and homogeneous elements $x_1, \ldots, x_{N-1} \in \mathcal{G}$, $x_N \in \mathcal{H}$,
\begin{align*}
    \sum_{k+l = N+1} &\sum_{\substack{\sigma \in \mathrm{Sh}(l, k-1) \\ \sigma (l) =N }} (-1)^\sigma \varepsilon (\sigma ) (-1)^{l (k-1) + (x_{\sigma (1)} + \cdots + x_{\sigma (l)} + l) (x_{ \sigma (l+1)} + \cdots + x_{\sigma (N)})} \\
    & \qquad \qquad \qquad \qquad \qquad  \rho_k \big(  x_{ \sigma (l+1)} , \ldots, x_{\sigma (N)}, \rho_l ( x_{\sigma (1)} , \ldots, x_{\sigma (l)}   )  \big) \\
    &+ \sum_{k+l = N+1} \sum_{\substack{\sigma \in \mathrm{Sh}(l, k-1) \\ \sigma (N) =N }} (-1)^\sigma \varepsilon (\sigma) (-1)^{l (k-1)} \rho_k \big(  \mu_l (  x_{\sigma (1)} , \ldots, x_{\sigma (l)}   ),   x_{ \sigma (l+1)} , \ldots, x_{\sigma (N)} \big) = 0.
\end{align*}

Let $\sigma \in \mathrm{Sh} (l, k-1)$ be a shuffle. Then we must have $\sigma (l) = N$ or $\sigma (N) = N$. Hence we have $\sum_{\sigma \in \mathrm{Sh} (l, k-1)} = \sum_{\substack{\sigma \in \mathrm{Sh} (l, k-1) \\ \sigma (l) = N}} + \sum_{\substack{\sigma \in \mathrm{Sh} (l, k-1) \\ \sigma (N) = N}}$. Thus, by using the skew-symmetry of the structure maps, one can observe that an $L_\infty$-algebra $(\mathcal{G}, \{ \mu_k \}_{k \geq 1})$ can be realized as a representation of itself. This is called the {\em adjoint representation}.

\begin{prop}
    Let $(\mathcal{G} , \{ \mu_k \}_{k \geq 1})$ be an $L_\infty$-algebra and $ \{ \rho_k \}_{k \geq 1}$ defines a representation of it on the graded vector space $\mathcal{H}$. Then the direct sum graded vector space $\mathcal{G} \oplus \mathcal{H}$ inherits an $L_\infty$-algebra structure with the structure maps $\{ \mu_k\ltimes \rho_k : (\mathcal{G} \oplus \mathcal{H})^{\otimes k} \rightarrow \mathcal{G} \oplus \mathcal{H} \}_{k \geq 1}$ that are given by
    \begin{align}\label{semi}
        &(\mu_k \ltimes \rho_k) \big(  (x_1, h_1), \ldots, (x_k , h_k)  \big) \nonumber \\
        & \qquad := \big(  \mu_k (x_1, \ldots, x_k) , \sum_{i=1}^k (-1)^{k-i} (-1)^{|h_i| ( |x_{i+1}| + \cdots + |x_k|)} \rho_k (x_1, \ldots, \widehat{x_i}, \ldots, x_k , h_i)  \big),
    \end{align}
    for $k \geq 1$ and $(x_1, h_1), \ldots , (x_k , h_k ) \in \mathcal{G} \oplus \mathcal{H}$.
\end{prop}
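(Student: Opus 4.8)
The plan is to verify directly that the collection $\{\mu_k \ltimes \rho_k\}_{k \geq 1}$ defined in (\ref{semi}) satisfies the two defining conditions of an $L_\infty$-algebra, namely graded skew-symmetry and the generalized Jacobi identities. Equivalently, by Theorem \ref{l-inf-mc}, one may show that $\sum_{k \geq 1} \widetilde{\mu_k \ltimes \rho_k}$ is a Maurer-Cartan element of the graded Lie algebra $(\oplus_n \mathrm{Hom}^n_\mathrm{sym}((\mathcal{G} \oplus \mathcal{H})[-1]), \llbracket~,~\rrbracket_\mathrm{NR})$; in either formulation the computation reduces to the same bookkeeping. The governing principle throughout is that, since each $\rho_l$ and each $\mu_l$ take values in $\mathcal{H}$ and $\mathcal{G}$ respectively, the defining identities for $\mathcal{G} \oplus \mathcal{H}$ decouple when projected onto the two summands: the $\mathcal{G}$-component will encode precisely the $L_\infty$-axioms for $\{\mu_k\}$, while the $\mathcal{H}$-component will encode precisely the representation axioms for $\{\rho_k\}$.

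First I would check graded skew-symmetry. The $\mathcal{G}$-component of $\mu_k \ltimes \rho_k$ is $\mu_k(x_1, \ldots, x_k)$, which is skew-symmetric by hypothesis. For the $\mathcal{H}$-component one permutes the pairs $(x_i, h_i)$ and uses the skew-symmetry of $\mu_k$ to reorder the $\mathcal{G}$-slots of each $\rho_k$, together with the skew-symmetry of $\rho_k$ in its $\mathcal{G}$-inputs; the extra signs $(-1)^{k-i}(-1)^{|h_i|(|x_{i+1}| + \cdots + |x_k|)}$ in (\ref{semi}) are exactly those needed to make the sum $\sum_i$ transform with the correct Koszul sign. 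Next, for the generalized Jacobi identity I would substitute homogeneous elements $(x_1, h_1), \ldots, (x_N, h_N)$ and examine the $\mathcal{G}$-projection. Here any summand in which the inner structure map $\mu_l \ltimes \rho_l$ contributes through its $\rho_l$-part, or in which the outer map contributes through its $\rho_k$-part, lands in $\mathcal{H}$ and so drops out; what remains is $\sum_{k+l=N+1} \sum_\sigma (-1)^\sigma \epsilon(\sigma) (-1)^{l(k-1)} \mu_k(\mu_l(x_{\sigma(1)}, \ldots, x_{\sigma(l)}), x_{\sigma(l+1)}, \ldots, x_{\sigma(N)})$, which vanishes because $(\mathcal{G}, \{\mu_k\})$ is an $L_\infty$-algebra.

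The heart of the proof is the $\mathcal{H}$-projection. Two families of terms survive. In the first, the inner map lands in $\mathcal{G}$ via $\mu_l(x_{\sigma(1)}, \ldots, x_{\sigma(l)})$, and the outer map $\mu_k \ltimes \rho_k$ then produces an $\mathcal{H}$-output by applying $\rho_k$ with one of the remaining $h$'s in its distinguished slot; these assemble into the $\rho_k(\ldots, \mu_l(\ldots), \ldots)$-terms of the second sum in the representation axiom. In the second family, the inner map contributes its $\mathcal{H}$-component $\rho_l(\ldots, h_j)$, which is then inserted into the distinguished $\mathcal{H}$-slot of the outer $\rho_k$; these assemble into the $\rho_k(\ldots, \rho_l(\ldots))$-terms of the first sum. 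Keeping track of which of the inputs $h_1, \ldots, h_N$ occupies the distinguished $\mathcal{H}$-slot plays the role of fixing the last argument $x_N \in \mathcal{H}$ in the representation axiom, and summing over all choices recovers the full identity. Since $\{\rho_k\}$ is assumed to be a representation, this $\mathcal{H}$-component vanishes as well, completing the verification.

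The main obstacle is the sign bookkeeping in this last step. One must reconcile three sources of signs: the shuffle signs $(-1)^\sigma \epsilon(\sigma)$ and the factor $(-1)^{l(k-1)}$ appearing in the $L_\infty$-identity for $\mathcal{G} \oplus \mathcal{H}$; the internal signs $(-1)^{k-i}(-1)^{|h_i|(|x_{i+1}|+\cdots+|x_k|)}$ built into the definition (\ref{semi}) of $\mu_k \ltimes \rho_k$; and the additional permutation sign $(-1)^{(x_{\sigma(1)}+\cdots+x_{\sigma(l)}+l)(x_{\sigma(l+1)}+\cdots+x_{\sigma(N)})}$ that appears in the representation axiom when the distinguished $\mathcal{H}$-input is moved past the other arguments. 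Verifying that these combine to the precise Koszul signs of the representation axiom is the only delicate part; once the signs are matched, the identity follows termwise and no further input is needed.
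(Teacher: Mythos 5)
Your proposal is correct and is essentially the argument the paper leaves implicit: the paper states this proposition without proof, and its surrounding text (the passage leading to Theorem \ref{mc-l-inf-rep}) simply asserts that $(\mathcal{G} \oplus \mathcal{H}, \{\mu_k \ltimes \rho_k\}_{k \geq 1})$ is an $L_\infty$-algebra if and only if $(\mathcal{G}, \{\mu_k\}_{k \geq 1})$ is an $L_\infty$-algebra and $\{\rho_k\}_{k \geq 1}$ is a representation on $\mathcal{H}$, which is precisely your decoupling of the identity into its $\mathcal{G}$- and $\mathcal{H}$-projections. In particular, your observation that each surviving term involves exactly one $\mathcal{H}$-entry, so that grouping the $\mathcal{H}$-projection by which $h_j$ occupies the distinguished slot reproduces the representation axiom (whose built-in Koszul sign $(-1)^{l(k-1) + (x_{\sigma(1)} + \cdots + x_{\sigma(l)} + l)(x_{\sigma(l+1)} + \cdots + x_{\sigma(N)})}$ is tailored exactly for this), is the same mechanism the paper encodes in its definition of a representation.
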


The $L_\infty$-algebra constructed in the above proposition is called the {\em semidirect product} of the $L_\infty$-algebra $(\mathcal{G}, \{ \mu_k \}_{k \geq 1})$ with the representation $\mathcal{H}$. It is often denoted by $\mathcal{G} \ltimes \mathcal{H}$ when all the structure operations are clear from the context.

Let $\mathcal{G} = \oplus_{i \in \mathbb{Z}} \mathcal{G}^i$ and $\mathcal{H} = \oplus_{i \in \mathbb{Z}} \mathcal{H}^i$ be two graded vector spaces (not necessarily equipped with any additional structures). Consider the shifted graded vector space $(\mathcal{G} \oplus \mathcal{H})[-1] = \oplus_{i \in \mathbb{Z}} ((\mathcal{G} \oplus \mathcal{H}) [-1])^i = \oplus_{i \in \mathbb{Z}} (\mathcal{G}^{i-1} \oplus \mathcal{H}^{i-1})$. For each $n \in \mathbb{Z}$, we define (similar to (\ref{gr-gr}))
\begin{align*}
    \mathrm{Hom}^n_\mathrm{sym} (  (\mathcal{G} \oplus \mathcal{H})[-1] ) := \oplus_{k \geq 1} \mathrm{Hom}^n_\mathrm{sym} \big(   (\mathcal{G} \oplus \mathcal{H})[-1]^{\otimes k} , (\mathcal{G} \oplus \mathcal{H})[-1]  \big).
\end{align*}
In Theorem \ref{l-inf-mc}, we have observed that the graded space $\oplus_{n \in \mathbb{Z}} \mathrm{Hom}^n_\mathrm{sym} (  (\mathcal{G} \oplus \mathcal{H})[-1] ) $ inherits a graded Lie bracket $\llbracket ~, ~ \rrbracket_\mathrm{NR}$ which makes it a graded Lie algebra. Moreover, Maurer-Cartan elements of this graded Lie algebra correspond to $L_\infty$-algebra structures on the graded vector space $\mathcal{G} \oplus \mathcal{H}$.

Suppose the graded vector spaces $\mathcal{G}$ and $\mathcal{H}$ are equipped with two collections of graded linear maps
\begin{align}
    &\{\mu_k \in \mathrm{Hom} (\mathcal{G}^{\otimes k}, \mathcal{G}) |~   \substack{ \mu_k \text{ is graded skew-symmetric}\\
    \text{ and  deg}(\mu_k ) = k-2 } \}_{k \geq 1} ~~~ \text{ and } \label{muk} \\
    &  \{\rho_k \in \mathrm{Hom} (\mathcal{G}^{\otimes k-1} \otimes \mathcal{H}, \mathcal{H}) |~   \substack{ \rho_k \text{ is graded skew-symmetric on the}\\
    \text{inputs of $\mathcal{G}$ and  deg}(\rho_k ) = k-2 } \}_{k \geq 1}. \label{nuk}
\end{align}
Consider the graded vector space $\mathcal{G} \oplus \mathcal{H}$ and endow it with the collection $\{ \mu_k \ltimes \rho_k : (\mathcal{G} \oplus \mathcal{H})^{\otimes k} \rightarrow \mathcal{G} \oplus \mathcal{H}   \}_{k \geq 1}$ of graded linear maps defined by (\ref{semi}). It is easy to observe that $\mu_k \ltimes \rho_k$ is skew-symmetric and $\mathrm{deg} (\mu_k \ltimes \rho_k) = k-2$, for $k \geq 1$. Then $(\mathcal{G} \oplus \mathcal{H}, \{ \mu_k \ltimes \rho_k \}_{k \geq 1})$ is an $L_\infty$-algebra if and only if $(\mathcal{G}, \{ \mu_k \}_{k \geq 1})$ is an $L_\infty$-algebra and $\{ \rho_k \}_{k \geq 1}$ defines a representation of it on the graded vector space $\mathcal{H}$. Combining this with the result of Theorem \ref{l-inf-mc}, we obtain the following result.

\begin{thm}\label{mc-l-inf-rep}
    Let $\mathcal{G}$ and $\mathcal{H}$ be two graded vector spaces. Suppose there are two collections of graded linear maps as of (\ref{muk}) and (\ref{nuk}). Then the following are equivalent:
\begin{itemize}
    \item[(i)] $(\mathcal{G}, \{ \mu_k \}_{k \geq 1})$ is an $L_\infty$-algebra and $ \{ \rho_k \}_{k \geq 1}$ defines a representation of it on the graded vector space $\mathcal{H}$.

\item[(ii)] The element $\sum_{k \geq 1} \widetilde{( \mu_k \ltimes \rho_k)}$ is a Maurer-Cartan element in the graded Lie algebra 
\begin{align*}
( \oplus_{n \in \mathbb{Z}} \mathrm{Hom}^n_\mathrm{sym}  ( (\mathcal{G} \oplus \mathcal{H})[-1]), \llbracket ~, ~ \rrbracket_\mathrm{NR}   ).
\end{align*}
\end{itemize}
\end{thm}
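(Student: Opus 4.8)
The plan is to deduce the equivalence from two facts that package the same $L_\infty$-structure on the direct sum $\mathcal{G} \oplus \mathcal{H}$ in two different ways. On one hand, the semidirect-product collection $\{\mu_k \ltimes \rho_k\}_{k \geq 1}$ of (\ref{semi}) assembles the data $\{\mu_k\}$ and $\{\rho_k\}$ into a single collection of operations on $\mathcal{G} \oplus \mathcal{H}$; on the other hand, Theorem \ref{l-inf-mc}(ii) translates any such $L_\infty$-structure into a Maurer-Cartan element of the Nijenhuis-Richardson graded Lie algebra on $\mathrm{Hom}^\bullet_\mathrm{sym}((\mathcal{G} \oplus \mathcal{H})[-1])$. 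Chaining the two gives exactly (i) $\Leftrightarrow$ (ii).

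First I would record that each $\mu_k \ltimes \rho_k$ is graded skew-symmetric with $\mathrm{deg}(\mu_k \ltimes \rho_k) = k-2$, so that the collection is of the type to which Theorem \ref{l-inf-mc}(ii) applies on the graded vector space $\mathcal{G} \oplus \mathcal{H}$; this is immediate from the defining formula (\ref{semi}) together with the hypotheses (\ref{muk}) and (\ref{nuk}). The central step is then to show that (i) is equivalent to the statement that $(\mathcal{G} \oplus \mathcal{H}, \{\mu_k \ltimes \rho_k\}_{k \geq 1})$ is an $L_\infty$-algebra. To do this I would substitute the maps $\mu_k \ltimes \rho_k$ into the higher Jacobi identity for a fixed $N \geq 1$ and project the resulting identity onto the two summands $\mathcal{G}$ and $\mathcal{H}$. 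The $\mathcal{G}$-component involves only the operations $\mu_k$ and reproduces precisely the $L_\infty$-relation of $(\mathcal{G}, \{\mu_k\})$, while the $\mathcal{H}$-component mixes the $\mu_k$ and $\rho_k$. After splitting each shuffle sum according to whether the distinguished $\mathcal{H}$-input is absorbed by the inner operation or survives to the outer one, the $\mathcal{H}$-component reproduces precisely the representation identity for $\{\rho_k\}$. Since the two projections are independent, the semidirect $L_\infty$-relation holds for every $N$ if and only if both families of relations hold, which is (i).

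Finally, applying Theorem \ref{l-inf-mc}(ii) to the graded vector space $\mathcal{G} \oplus \mathcal{H}$ equipped with $\{\mu_k \ltimes \rho_k\}_{k \geq 1}$ shows that this is an $L_\infty$-algebra if and only if $\sum_{k \geq 1} \widetilde{(\mu_k \ltimes \rho_k)}$ is a Maurer-Cartan element, i.e. condition (ii); composing with the equivalence of the previous paragraph completes the argument. The main obstacle is the sign bookkeeping in the central step: matching the Koszul signs built into (\ref{semi}) and the signs generated by reindexing the shuffle sums against the exact sign pattern $(-1)^{l(k-1)+\cdots}$ appearing in the representation axiom. The decomposition $\mathrm{Sh}(l,k-1) = \{\sigma(l) = N\} \sqcup \{\sigma(N) = N\}$, already used in the passage preceding the adjoint representation, is the key combinatorial device that makes the $\mathcal{H}$-component line up term by term with the representation identity.
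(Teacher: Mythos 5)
Your proposal is correct and takes essentially the same route as the paper: the paper likewise notes that each $\mu_k \ltimes \rho_k$ is graded skew-symmetric of degree $k-2$, asserts that $(\mathcal{G}\oplus\mathcal{H}, \{\mu_k \ltimes \rho_k\}_{k\geq 1})$ is an $L_\infty$-algebra if and only if condition (i) holds, and then concludes by Theorem \ref{l-inf-mc}(ii). Your component-wise verification of that central step, using the decomposition $\mathrm{Sh}(l,k-1)=\{\sigma(l)=N\}\sqcup\{\sigma(N)=N\}$, merely fills in details the paper leaves implicit (it uses the same shuffle splitting in the passage introducing the adjoint representation).
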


We are now in a position to introduce matched pairs of $L_\infty$-algebras.

\begin{defn}
    A {\em matched pair of $L_\infty$-algebras} is a quadruple $(\mathcal{G}, \mathcal{H}, \{ \rho_k \}_{k \geq 1}, \{ \psi_k \}_{k \geq 1})$ in which $\mathcal{G} = (\mathcal{G}, \{ \mu_k \}_{k \geq 1})$ and $\mathcal{H} = (\mathcal{H}, \{ \nu_k \}_{k \geq 1})$ are both $L_\infty$-algebras and
    \begin{align*}
        \{\rho_k \in \mathrm{Hom} (\mathcal{G}^{\otimes k-1} \otimes \mathcal{H}, \mathcal{H}) |~   \substack{ \rho_k \text{ is graded skew-symmetric on the}\\
    \text{inputs of $\mathcal{G}$ and  deg}(\rho_k ) = k-2 } \}_{k \geq 1} \\
     \{\psi_k \in \mathrm{Hom} (\mathcal{H}^{\otimes k-1} \otimes \mathcal{G}, \mathcal{G}) |~   \substack{ \psi_k \text{ is graded skew-symmetric on the}\\
    \text{inputs of $\mathcal{H}$ and  deg}(\psi_k ) = k-2 } \}_{k \geq 1} 
    \end{align*}
    are two collections of graded linear maps subject to satisfy the following conditions:

    (i) the collection $\{ \rho_k \}_{k \geq 1}$ defines a representation of the $L_\infty$-algebra $(\mathcal{G}, \{ \mu_k \}_{k \geq 1})$ on the graded vector space $\mathcal{H}$,

    (ii) the collection $\{ \psi_k \}_{k \geq 1}$ defines a representation of the $L_\infty$-algebra $(\mathcal{H}, \{ \nu_k \}_{k \geq 1})$ on the graded vector space $\mathcal{G}$

    (iii) the collections $\{ \rho_k \}_{k \geq 1}$ and $\{ \psi_k \}_{k \geq 1}$ additionally satisfy the following compatibilities: for any $N \geq 1$ and homogeneous elements $(x_1, h_1), \ldots , (x_N, h_N) \in \mathcal{G} \oplus \mathcal{H}$,

    \begin{align}
       & \sum_{k+l = N+1} \sum_{\sigma \in \mathrm{Sh}(l,k-1)} \bigg\{ (-1)^\sigma \varepsilon (\sigma) (-1)^{l(k-1)}  \mu_k \big(  \mu_l  (x_{\sigma (1)}, \ldots, x_{\sigma (l)}) , x_{\sigma (l+1)}, \ldots, x_{\sigma (N)}   \big) \nonumber  \\
       & + \sum_{i=1}^l  (-1)^\sigma \varepsilon (\sigma) (-1)^{l(k-1)} (-1)^{l-i} \mu_k \big(   \psi_l ( h_{\sigma (1)}, \ldots, \widehat{h_{\sigma (i)}}, \ldots, h_{\sigma (l)}, x_{\sigma (i)})  ,  x_{\sigma (l+1)}, \ldots, x_{\sigma (N)}   \big) \nonumber  \\
       & + \sum_{j=1}^{k-1}  (-1)^\sigma \varepsilon (\sigma) (-1)^{l(k-1)}  (-1)^{k-j+1} \psi_k \big(   \nu_l (h_{\sigma (1)}, \ldots, h_{\sigma (l)}), h_{\sigma (l+1)}, \ldots , \widehat{h_{\sigma (l+j)}}, \ldots, h_{\sigma (N)}, x_{\sigma (l+ j)}   \big) \nonumber  \\
       & + \sum_{j=1}^{k-1}  (-1)^\sigma \varepsilon (\sigma) (-1)^{(l-1)(k-1) + j+l -i} \psi_k \big(  \rho_l ( x_{\sigma (1)}, .., \widehat{x_{\sigma (i)}}, .. , x_{\sigma (l)}, h_{\sigma (i)}) , h_{\sigma (l+1)}, .. , \widehat{h_{\sigma (l+j)}}, .., h_{\sigma (N)}, x_{\sigma (l+ j)}  \big) \nonumber  \\
       & + (-1)^\sigma \varepsilon (\sigma) (-1)^{(l-1)(k-1)} \psi_k \big(  h_{\sigma (l+1)} , \ldots , h_{\sigma (N)},  \mu_l  (x_{\sigma (1)}, \ldots, x_{\sigma (l)})  \big)  \nonumber \\
       & + \sum_{i=1}^l (-1)^{k+l -i-1}  (-1)^\sigma \varepsilon (\sigma) (-1)^{l(k-1)}  \psi_k \big(    h_{\sigma (l+1)} , \ldots , h_{\sigma (N)}, \psi_l (h_{\sigma (1)}, \ldots, \widehat{h_{\sigma (i)}}, \ldots, h_{\sigma (l)}, x_{\sigma (i)}) \big) \bigg\} = 0,
    \end{align}
    \begin{align}
       & \sum_{k+l = N+1} \sum_{\sigma \in \mathrm{Sh}(l,k-1)} \bigg\{ (-1)^\sigma \varepsilon (\sigma) (-1)^{l(k-1)}  \nu_k \big(  \nu_l  (h_{\sigma (1)}, \ldots, h_{\sigma (l)}) , h_{\sigma (l+1)}, \ldots, h_{\sigma (N)}   \big) \nonumber  \\
       & + \sum_{i=1}^l  (-1)^\sigma \varepsilon (\sigma) (-1)^{l(k-1)} (-1)^{l-i} \nu_k \big(   \rho_l ( x_{\sigma (1)}, \ldots, \widehat{x_{\sigma (i)}}, \ldots, x_{\sigma (l)}, h_{\sigma (i)})  ,  h_{\sigma (l+1)}, \ldots, h_{\sigma (N)}   \big) \nonumber  \\
       & + \sum_{j=1}^{k-1}  (-1)^\sigma \varepsilon (\sigma) (-1)^{l(k-1)}  (-1)^{k-j+1} \rho_k \big(   \mu_l (x_{\sigma (1)}, \ldots, x_{\sigma (l)}), x_{\sigma (l+1)}, \ldots , \widehat{x_{\sigma (l+j)}}, \ldots, x_{\sigma (N)}, h_{\sigma (l+ j)}   \big) \nonumber  \\
       & + \sum_{j=1}^{k-1}  (-1)^\sigma \varepsilon (\sigma) (-1)^{(l-1)(k-1) + j+l -i} \rho_k \big(  \psi_l ( h_{\sigma (1)}, .., \widehat{h_{\sigma (i)}}, .. , h_{\sigma (l)}, x_{\sigma (i)}) , x_{\sigma (l+1)}, .. , \widehat{x_{\sigma (l+j)}}, .., x_{\sigma (N)}, h_{\sigma (l+ j)}  \big) \nonumber  \\
       & + (-1)^\sigma \varepsilon (\sigma) (-1)^{(l-1)(k-1)} \rho_k \big(  x_{\sigma (l+1)} , \ldots , x_{\sigma (N)},  \nu_l  (h_{\sigma (1)}, \ldots, h_{\sigma (l)})  \big)  \nonumber \\
       & + \sum_{i=1}^l (-1)^{k+l -i-1}  (-1)^\sigma \varepsilon (\sigma) (-1)^{l(k-1)}  \rho_k \big(    x_{\sigma (l+1)} , \ldots , x_{\sigma (N)}, \rho_l (x_{\sigma (1)}, \ldots, \widehat{x_{\sigma (i)}}, \ldots, x_{\sigma (l)}, h_{\sigma (i)}) \big) \bigg\} = 0.
    \end{align}
\end{defn}

\begin{remark}
    Any matched pair of Lie algebras can be realized as a matched pair of $L_\infty$-algebras.
\end{remark}

\begin{exam}
    (Matched pair of graded Lie algebras) Let $(\mathcal{G}, [~,~]_\mathcal{G})$ and $(\mathcal{H}, [~,~]_\mathcal{H})$ be two graded Lie algebras. Suppose there are degree $0$ graded linear maps $\rho : \mathcal{G} \times \mathcal{H} \rightarrow \mathcal{H}$ and $\psi : \mathcal{H} \times \mathcal{G} \rightarrow \mathcal{G}$ satisfying
    \begin{align}
        \rho_{[x,y]_\mathcal{G}} h =~& \rho_x \rho_y h - (-1)^{|x| |y|} \rho_y \rho_x h, \label{mpgla1}\\
        \psi_{[h, k]_\mathcal{H}} x =~& \psi_h \psi_k x - (-1)^{|h| |k|} \psi_k \psi_h x, \label{mpgla2}\\
        \rho_x ([h, k]_\mathcal{H}) =~& [\rho_x h, k]_\mathcal{H} + (-1)^{|x| |h|} [h, \rho_x k]_\mathcal{H} + (-1)^{(|x| + |h|)|k|} \rho_{\psi_k x} h -(-1)^{|x| |h|} \rho_{\psi_h x} k, \label{mpgla3}\\
        \psi_h ([x, y]_\mathcal{G}) =~& [\psi_h x, y]_\mathcal{G} + (-1)^{|h| |x|} [x, \psi_h y]_\mathcal{G} + (-1)^{(|h| + |x|)|y|} \psi_{\rho_y h} x - (-1)^{|h||x|} \psi_{\rho_x h} y, \label{mpgla4}
    \end{align}
    for $x, y \in \mathcal{G}$ and $h, k \in \mathcal{H}$. Then the quadruple $(\mathcal{G}, \mathcal{H}, \rho, \psi)$ is called a matched pair of graded Lie algebras. A matched pair of graded Lie algebras is an example of a matched pair of $L_\infty$-algebras.
\end{exam}

Let $(\mathfrak{g}, [~,~]_\mathfrak{g})$ be a Lie algebra. Then the graded vector space $\oplus_{n \geq 0} \mathrm{Hom} (\wedge^n \mathfrak{g}, \mathfrak{g})$ of all skew-symmetric multilinear maps carries a cup-product (ref. \cite{nij-ric3}) given by
\begin{align}\label{cupp}
    [P, Q]_\mathrm{cup} (x_1, \ldots, x_{m+n}) = \sum_{\sigma \in \mathrm{Sh} (m, n)} (-1)^\sigma [P (  x_{\sigma (1)}, \ldots, x_{\sigma (m)}  ), Q( x_{\sigma (m+1)}, \ldots, x_{\sigma (m+n)}  )]_\mathfrak{g},
\end{align}
for $P \in \mathrm{Hom} (\wedge^m \mathfrak{g}, \mathfrak{g})$, $Q \in \mathrm{Hom} (\wedge^n \mathfrak{g}, \mathfrak{g})$ and $x_1, \ldots, x_{m+n} \in \mathfrak{g}$. Note that an element $P \in \mathrm{Hom} (\wedge^m \mathfrak{g}, \mathfrak{g})$ can be identified with an element $\theta \otimes x \in \wedge^m \mathfrak{g}^* \otimes \mathfrak{g}$ by the relation $P(x_1, \ldots, x_m) = \theta (x_1, \ldots, x_m) x$, for any $x_1, \ldots, x_m \in \mathfrak{g}$. With this identification, the cup-product (\ref{cupp}) can be equivalently expressed as
\begin{align*}
    [\theta \otimes x, \vartheta \otimes y]_\mathrm{cup} = (\theta \wedge \vartheta) \otimes [x, y]_\mathfrak{g}, \text{ for } P = \theta \otimes x \text{ and } Q = \vartheta \otimes y.
\end{align*}
The cup-product $[~,~]_\mathrm{cup}$ makes the space $\oplus_{n \geq 0} \mathrm{Hom} (\wedge^n \mathfrak{g}, \mathfrak{g})$ into a graded Lie algebra. In other words, the pair $( \oplus_{n \geq 0} \mathrm{Hom} (\wedge^n \mathfrak{g}, \mathfrak{g}), [~,~]_\mathrm{cup}  )$ is a graded Lie algebra. For any $m, n \geq 0$, we consider the map
\begin{align}\label{rho-map}
    \rho : \mathrm{Hom} (\wedge^{m+1} \mathfrak{g}, \mathfrak{g}) \times \mathrm{Hom} (\wedge^{n} \mathfrak{g}, \mathfrak{g}) \rightarrow \mathrm{Hom} (\wedge^{m+n} \mathfrak{g}, \mathfrak{g}) ~\text{ given by }~ (P, Q) \mapsto \rho_P Q := i_P Q,  
\end{align}
for $P \in \mathrm{Hom} (\wedge^{m+1} \mathfrak{g}, \mathfrak{g})$ and $Q \in  \mathrm{Hom} (\wedge^{n} \mathfrak{g}, \mathfrak{g})$. Then we have the following result.

\begin{prop}
    Let $(\mathfrak{g}, [~,~]_\mathfrak{g})$ be a Lie algebra. Then the quadruple 
    \begin{align*}
    \big(     (\oplus_{n \geq 0} \mathrm{Hom} (\wedge^{n+1} \mathfrak{g}, \mathfrak{g}), [~,~]_\mathrm{NR}), (\oplus_{n \geq 0} \mathrm{Hom} (\wedge^{n} \mathfrak{g}, \mathfrak{g}), [~,~]_\mathrm{cup}), \rho, \psi= 0     \big)
    \end{align*}
    is a matched pair of graded Lie algebras.
\end{prop}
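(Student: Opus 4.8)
The plan is to unwind the definition of a matched pair of graded Lie algebras for this specific quadruple and to exploit the hypothesis $\psi = 0$. With $\psi = 0$, condition (\ref{mpgla2}) holds trivially (both sides vanish), and in (\ref{mpgla4}) the left-hand side vanishes while every term on the right carries a factor of $\psi$; so (\ref{mpgla2}) and (\ref{mpgla4}) are automatic. Hence the whole content reduces to verifying (\ref{mpgla1}) and (\ref{mpgla3}) for the action $\rho_P Q = i_P Q$, where $P$ ranges over the Nijenhuis-Richardson algebra $\oplus_{m \geq 0} \mathrm{Hom}(\wedge^{m+1}\mathfrak{g}, \mathfrak{g})$ (with $|P| = m$) and $Q$ over the cup-product algebra $\oplus_{n \geq 0} \mathrm{Hom}(\wedge^{n}\mathfrak{g}, \mathfrak{g})$ (with $|Q| = n$). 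As a preliminary I would record that $i_P$ raises the cup-degree by $|P|$, so that $\rho$ is a degree-$0$ graded map as demanded.

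For (\ref{mpgla1}), which asserts that $\rho = i_{(-)}(-)$ is a graded representation of the Nijenhuis-Richardson Lie algebra, the key input is the graded right pre-Lie identity satisfied by the insertion operation. Writing $a \circ b := i_a b$ on the full space $\oplus_{k \geq -1} \mathrm{Hom}(\wedge^{k+1}\mathfrak{g}, \mathfrak{g})$, the associator $A(a,b,c) := i_a i_b c - i_{i_a b} c$ is graded right-symmetric, $A(a,b,c) = (-1)^{|a||b|} A(b,a,c)$; this is precisely the identity underlying the Nijenhuis-Richardson theorem recalled above. Applying it with $a = P$, $b = P'$ in the Nijenhuis-Richardson algebra and $c = Q$ in the cup-product algebra, and rearranging, gives
\[
i_P i_{P'} Q - (-1)^{|P||P'|} i_{P'} i_P Q = i_{i_P P' - (-1)^{|P||P'|} i_{P'} P} Q = i_{[P, P']_\mathrm{NR}} Q,
\]
which is exactly (\ref{mpgla1}). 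Only the Nijenhuis-Richardson degrees of $P$ and $P'$ enter the signs, so the fact that $Q$ carries the (different) cup-degree causes no conflict, and the arity of $Q$ is a free parameter in the associator identity.

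For (\ref{mpgla3}), which with $\psi = 0$ reads $i_P [Q, R]_\mathrm{cup} = [i_P Q, R]_\mathrm{cup} + (-1)^{|P||Q|} [Q, i_P R]_\mathrm{cup}$, I would use the tensor description $\mathrm{Hom}(\wedge^n \mathfrak{g}, \mathfrak{g}) \cong \wedge^n \mathfrak{g}^* \otimes \mathfrak{g}$. Since both sides are bilinear in $Q$ and $R$, it suffices to treat decomposables $Q = \theta \otimes y$ and $R = \vartheta \otimes z$, for which $[\theta \otimes y, \vartheta \otimes z]_\mathrm{cup} = (\theta \wedge \vartheta) \otimes [y, z]_\mathfrak{g}$. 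A direct inspection of the shuffle formula shows that insertion touches only the form factor, $i_P(\theta \otimes y) = (\iota_P \theta) \otimes y$, where $\iota_P$ is the Nijenhuis-Richardson insertion operator on $\wedge^\bullet \mathfrak{g}^*$ attached to the $\mathfrak{g}$-valued form $P$. The crucial fact is that $\iota_P$ is a graded derivation of degree $|P|$ of $(\wedge^\bullet \mathfrak{g}^*, \wedge)$, that is $\iota_P(\theta \wedge \vartheta) = (\iota_P \theta) \wedge \vartheta + (-1)^{|P||\theta|} \theta \wedge (\iota_P \vartheta)$. Granting this, (\ref{mpgla3}) follows by applying $i_P$ to $(\theta \wedge \vartheta) \otimes [y,z]_\mathfrak{g}$ and matching the two resulting terms with $[i_P Q, R]_\mathrm{cup}$ and $(-1)^{|P||Q|} [Q, i_P R]_\mathrm{cup}$.

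The main obstacle is purely combinatorial: establishing the two structural facts feeding the argument, namely the graded right pre-Lie identity for insertions (used for (\ref{mpgla1})) and the graded Leibniz rule for $\iota_P$ on $\wedge^\bullet \mathfrak{g}^*$ (used for (\ref{mpgla3})). The former is standard and already implicit in the Nijenhuis-Richardson theorem cited above. For the latter I would first reduce to $P = \alpha \otimes X$, so that $\iota_P = \alpha \wedge \iota_X(-)$ with $\iota_X$ the ordinary contraction (a degree $-1$ derivation), verify the Leibniz rule by tracking the single Koszul sign arising when $\alpha$ is commuted past $\theta$, and then extend by linearity. This isolates the entire sign computation into one transparent step and completes the verification of (\ref{mpgla1}) and (\ref{mpgla3}), hence of the proposition.
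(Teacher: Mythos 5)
Your proposal is correct and follows essentially the same route as the paper: condition (\ref{mpgla1}) is verified via the graded (right) pre-Lie identity for Nijenhuis--Richardson insertions (your associator-symmetry statement is exactly the fact the paper invokes for $P \diamond Q := i_Q P$), and condition (\ref{mpgla3}) is verified on decomposables $\theta \otimes y$, $\vartheta \otimes z$ using that contraction acts as a graded derivation of the wedge product, with (\ref{mpgla2}) and (\ref{mpgla4}) trivial since $\psi = 0$. The only cosmetic difference is that you keep $P$ general and prove the derivation property of $\iota_P$ by reducing to $P = \alpha \otimes X$, whereas the paper decomposes $P$ from the start; the sign bookkeeping is identical.
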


\begin{proof}
    For any $P \in \mathrm{Hom} (\wedge^{m+1} \mathfrak{g}, \mathfrak{g})$, $Q \in \mathrm{Hom} (\wedge^{n+1} \mathfrak{g}, \mathfrak{g})$ and $R \in \mathrm{Hom} (\wedge^{r} \mathfrak{g}, \mathfrak{g})$, we have
    \begin{align*} 
        \rho_{[P, Q]_\mathrm{NR}} R =~& i_{[P, Q]_\mathrm{NR}} R \\
        =~& i_{ (i_P Q - (-1)^{mn} i_Q P)} R \\
        =~& i_P i_Q R - (-1)^{mn} i_Q i_P R\\
        =~& \rho_P \rho_Q R - (-1)^{mn} \rho_Q \rho_P R.
    \end{align*}
    In the third equality, we have used the fact that the operation  $P \diamond Q:= i_Q P$  is a graded right pre-Lie product on $\oplus_{n \geq 0} \mathrm{Hom}(\wedge^{n+1} \mathfrak{g}, \mathfrak{g})$. This proves the condition (\ref{mpgla1}). Next, let $P = \theta \otimes x \in \wedge^{m+1} \mathfrak{g}^* \otimes \mathfrak{g}$, $Q = \vartheta \otimes y \in \wedge^n \mathfrak{g}^* \otimes \mathfrak{g}$ and $R = \eta \otimes z \in \wedge^r \mathfrak{g}^* \otimes \mathfrak{g}$. Then we have
    \begin{align*}
        \rho_P ([Q, R]_\mathrm{cup}) =~& \rho_{\theta \otimes x} (\vartheta \wedge \eta \otimes [y, z]_\mathfrak{g}) \\
        =~& \theta \wedge (i_x (\vartheta \wedge \eta)) \otimes [y, z]_\mathfrak{g} \\
        =~& (\theta \wedge (i_x \vartheta) \wedge \eta ) \otimes [y, z]_\mathfrak{g} + (-1)^n (\theta \wedge \vartheta \wedge (i_x \eta)) \otimes [y, z]_\mathfrak{g}.
    \end{align*}
    On the other hand,
    \begin{align*}
        [\rho_P Q, R]_\mathrm{cup} + (-1)^{mn} [Q, \rho_P R]_\mathrm{cup} 
        =~& [(\theta \wedge i_x \vartheta) \otimes y, \eta \otimes z]_\mathrm{cup} + (-1)^{mn} [ \vartheta \otimes y , (\theta \wedge i_x \eta) \otimes z]_\mathrm{cup} \\
        =~& ( \theta \wedge (i_x \vartheta ) \wedge \eta ) \otimes [y, z]_\mathfrak{g} + (-1)^{mn + (m+1)n} (\theta \wedge \vartheta \wedge (i_x \eta)) \otimes [y, z]_\mathfrak{g}.
    \end{align*}
    Hence $ \rho_P ([Q, R]_\mathrm{cup}) = [\rho_P Q, R]_\mathrm{cup} + (-1)^{mn} [Q, \rho_P R]_\mathrm{cup}$ which verifies the condition (\ref{mpgla3}) when $\psi = 0$. This proves the result.
\end{proof}

In the following, we construct a graded Lie algebra whose specific Maurer-Cartan elements correspond to matched pairs of $L_\infty$-algebras. Let $\mathcal{G}$ and $\mathcal{H}$ be two graded vector spaces. Consider the graded Lie algebra
\begin{align*}
    C = \big(  \oplus_{n \in \mathbb{Z}} \mathrm{Hom}^n_\mathrm{sym} ( (\mathcal{G} \oplus \mathcal{H})[-1]), \llbracket ~, ~ \rrbracket_\mathrm{NR}  \big)
\end{align*}
given in Theorem \ref{l-inf-mc}. Note that $${C}^n = \mathrm{Hom}^n_\mathrm{sym} ( (\mathcal{G} \oplus \mathcal{H})[-1]) = \oplus_{k \geq 1} \mathrm{Hom}^n_\mathrm{sym} \big(  (\mathcal{G} \oplus \mathcal{H})[-1]^{\otimes k}, (\mathcal{G} \oplus \mathcal{H})[-1]   \big), $$ 
for all $n \in \mathbb{Z}$.  A map $f \in \mathrm{Hom}^n_\mathrm{sym} \big(  (\mathcal{G} \oplus \mathcal{H})[-1]^{\otimes k}, (\mathcal{G} \oplus \mathcal{H})[-1]   \big)$ is said to have tridegree $n|r|s$ (with $r+s = k-1$) if 
\begin{align*}
    f ( \mathcal{G}[-1]^{\otimes r+1} \otimes \mathcal{H}[-1]^{\otimes s}) \subset \mathcal{G} [-1], \quad  f ( \mathcal{G}[-1]^{\otimes r} \otimes \mathcal{H}[-1]^{\otimes s+1}) \subset \mathcal{H} [-1] ~~~ \text{ and } ~~~ f = 0  ~ \text{ otherwise}.
\end{align*}
We denote the set of all maps of tridegree $n|r|s$ by ${C}^{n|r|s}$. That is,
\begin{align*}
    {C}^{n|r|s} \cong \mathrm{Hom}^n_\mathrm{sym} (\mathcal{G}[-1]^{\otimes r+1} \otimes \mathcal{H}[-1]^{\otimes s}, \mathcal{G}[-1]) \oplus \mathrm{Hom}^n_\mathrm{sym} (\mathcal{G}[-1]^{\otimes r} \otimes \mathcal{H}[-1]^{\otimes s+1}, \mathcal{H}[-1]).
\end{align*}
Then we have
\begin{align*}
    {C}^n = \oplus_{k \geq 1} \mathrm{Hom}^n_\mathrm{sym} \big(  (\mathcal{G} \oplus \mathcal{H})[-1]^{\otimes k}, (\mathcal{G} \oplus \mathcal{H})[-1]   \big)  = \oplus_{k \geq 1} \oplus_{\substack{r+s = k-1 \\ r, s \geq -1}} {C}^{n|r|s}.
\end{align*}

The following result is straightforward to verify.

\begin{lemma}\label{lem}
    Let $f \in C^{n_f|r_f|s_f}$ and $g \in C^{n_g | r_g | s_g}$. Then $\llbracket f, g \rrbracket_\mathrm{NR} \in C^{n_f + n_g | r_f + r_g | s_f + s_g}$.
\end{lemma}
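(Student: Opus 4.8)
The plan is to imitate the bidegree computation of Lemma \ref{lemma-g}, now keeping track of the decomposition of $(\mathcal{G}\oplus\mathcal{H})[-1]$ into its $\mathcal{G}[-1]$- and $\mathcal{H}[-1]$-summands. By bilinearity of $\llbracket~,~\rrbracket_\mathrm{NR}$ it suffices to treat homogeneous $f \in C^{n_f|r_f|s_f}$ and $g \in C^{n_g|r_g|s_g}$, where $f$ acts on $k_f := r_f+s_f+1$ inputs and $g$ on $k_g := r_g+s_g+1$ inputs. The internal degree superscript is immediate: Theorem \ref{l-inf-mc} already records that $\llbracket~,~\rrbracket_\mathrm{NR}$ sends $\mathrm{Hom}^{n_f}_\mathrm{sym} \times \mathrm{Hom}^{n_g}_\mathrm{sym}$ into $\mathrm{Hom}^{n_f+n_g}_\mathrm{sym}$, so it remains only to verify that both insertions $i_f g$ and $i_g f$ have tridegree part $(r_f+r_g)\,|\,(s_f+s_g)$.

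The crux is a bookkeeping reformulation of tridegree. For a tensor monomial $X$ in $(\mathcal{G}\oplus\mathcal{H})[-1]$ write $g_\#(X)$ and $h_\#(X)$ for the numbers of its $\mathcal{G}[-1]$- and $\mathcal{H}[-1]$-factors, and assign to a single homogeneous output the pair $(1,0)$ if it lies in $\mathcal{G}[-1]$ and $(0,1)$ if it lies in $\mathcal{H}[-1]$. Since the maps in question are graded symmetric, the ``type'' of a monomial depends only on the counts $(g_\#, h_\#)$. Unwinding the definition of tridegree, a homogeneous $\phi$ has tridegree $r|s$ precisely when it vanishes on every monomial whose type is neither $(r+1, s)$ nor $(r, s+1)$, while the condition $\phi(\mathcal{G}[-1]^{\otimes r+1}\otimes \mathcal{H}[-1]^{\otimes s}) \subset \mathcal{G}[-1]$ sends type $(r+1,s)$ to output $(1,0)$, and $\phi(\mathcal{G}[-1]^{\otimes r}\otimes \mathcal{H}[-1]^{\otimes s+1}) \subset \mathcal{H}[-1]$ sends type $(r,s+1)$ to output $(0,1)$. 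In both admissible cases the passage from input to output lowers $g_\#$ by exactly $r$ and $h_\#$ by exactly $s$; that is, $\phi$ effects the net change $(g_\#, h_\#) \mapsto (g_\# - r,\, h_\# - s)$.

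With this in hand the claim follows by evaluating $i_f g$ on a monomial $X$ with $g_\#(X)=a$ and $h_\#(X)=b$ (so $a+b = k_f+k_g-1$). In the defining sum $\sum_{\sigma \in \mathrm{Sh}(k_f,\,k_g-1)} \epsilon(\sigma)\, g\big(f(x_{\sigma(1)},\ldots,x_{\sigma(k_f)}), x_{\sigma(k_f+1)},\ldots,x_{\sigma(k_f+k_g-1)}\big)$, a shuffle contributes nonzero only when $f$ is nonvanishing on its $k_f$ arguments, in which case $f$ lowers the running counts by $(r_f, s_f)$. Crucially, \emph{regardless} of whether $f$ deposits its output into $\mathcal{G}[-1]$ or into $\mathcal{H}[-1]$, the monomial then fed to $g$ has $\mathcal{G}[-1]$-count $a - r_f$ and $\mathcal{H}[-1]$-count $b - s_f$. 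Nonvanishing of $g$ therefore forces $(a-r_f,\, b-s_f)$ to be an admissible input type of a tridegree $r_g|s_g$ map: either $(r_g+1, s_g)$, giving $(a,b) = (r_f+r_g+1,\, s_f+s_g)$ with output in $\mathcal{G}[-1]$, or $(r_g, s_g+1)$, giving $(a,b) = (r_f+r_g,\, s_f+s_g+1)$ with output in $\mathcal{H}[-1]$. These are exactly the two admissible input types and output placements of a map of tridegree $(r_f+r_g)\,|\,(s_f+s_g)$, and on all other monomials $i_f g$ vanishes; hence $i_f g$ has that tridegree. The same computation applies verbatim to $i_g f$, and since $\llbracket f, g\rrbracket_\mathrm{NR} = i_f g - (-1)^{n_f n_g}\, i_g f$, combining the two with the degree bookkeeping of the first paragraph gives $\llbracket f, g\rrbracket_\mathrm{NR} \in C^{n_f+n_g\,|\,r_f+r_g\,|\,s_f+s_g}$.

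The only point genuinely worth spelling out — and the one mild obstacle — is the uniformity highlighted above: that the monomial handed to $g$ carries counts $(a-r_f,\, b-s_f)$ independently of where $f$ places its output. This is precisely what makes the two output cases of $f$ collapse to a single constraint on $(a,b)$, so that $i_f g$ is supported on one tridegree rather than spreading across several. Everything else is the routine shuffle and sign bookkeeping already encountered in Lemma \ref{lemma-g}, which is why the statement may fairly be called straightforward to verify.
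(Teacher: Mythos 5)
Your proof is correct. The paper in fact offers no argument for this lemma at all — it is dismissed as ``straightforward to verify,'' exactly as for its bidegree analogue (Lemma \ref{lemma-g}) — and your bookkeeping verification, in particular the uniformity observation that a tridegree $r|s$ map lowers the $(\mathcal{G}[-1],\mathcal{H}[-1])$-counts by exactly $(r,s)$ irrespective of whether its output lands in $\mathcal{G}[-1]$ or $\mathcal{H}[-1]$, is precisely the routine check the authors intended, carried out correctly for both insertions $i_f g$ and $i_g f$.
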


We define a subspace $\mathcal{M}^n \subset C^n$ by 
\begin{align*}
\mathcal{M}^n = \oplus_{k \geq 1} \oplus_{\substack{r+s = k-1\\ r, s \geq 0}} C^{n|r|s}.
\end{align*}
Thus, an element $\Delta \in \mathcal{M}^n$ can be expressed as $\Delta = \oplus_{k \geq 1} \big(   (\Delta_1, \Delta_2, \ldots, \Delta_k) \big)$, where $\Delta_i \in C^{n| k-i| i-1}$. It follows from Lemma \ref{lem} that $\mathcal{M} = \oplus_{n \in \mathbb{Z}} \mathcal{M}^n$ is a graded Lie subalgebra of $C$.

Next, suppose that $\mathcal{G}$ and $\mathcal{H}$ are equipped with four collections of maps
\begin{align*}
&\{\mu_k \in \mathrm{Hom} (\mathcal{G}^{\otimes k}, \mathcal{G}) |~   \substack{ \mu_k \text{ is graded skew-symmetric}\\
    \text{ and  deg}(\mu_k ) = k-2 } \}_{k \geq 1},\\
   & \{\nu_k \in \mathrm{Hom} (\mathcal{H}^{\otimes k}, \mathcal{H}) |~   \substack{ \nu_k \text{ is graded skew-symmetric}\\
    \text{ and  deg}(\nu_k ) = k-2 } \}_{k \geq 1},\\
    &    \{\rho_k \in \mathrm{Hom} (\mathcal{G}^{\otimes k-1} \otimes \mathcal{H}, \mathcal{H}) |~   \substack{ \rho_k \text{ is graded skew-symmetric on the}\\
    \text{inputs of $\mathcal{G}$ and  deg}(\rho_k ) = k-2 } \}_{k \geq 1}, \\
    & \{\psi_k \in \mathrm{Hom} (\mathcal{H}^{\otimes k-1} \otimes \mathcal{G}, \mathcal{G}) |~   \substack{ \psi_k \text{ is graded skew-symmetric on the}\\
    \text{inputs of $\mathcal{H}$ and  deg}(\psi_k ) = k-2 } \}_{k \geq 1}. 
    \end{align*}
    For each $k \geq 1$, we define graded skew-symmetric maps $\mu_k \ltimes \rho_k, \psi_k \rtimes \nu_k : (\mathcal{G} \oplus \mathcal{H})^{\otimes k} \rightarrow \mathcal{G} \oplus \mathcal{H}$ by
\begin{align*}
    &(\mu_k \ltimes \rho_k) \big(  (x_1, h_1), \ldots, (x_k , h_k)  \big) \\
    & \qquad := \big(  \mu_k (x_1, \ldots, x_k) , \sum_{i=1}^k (-1)^{k-i} (-1)^{|h_i| ( |x_{i+1}| + \cdots + |x_k|)} \rho_k (x_1, \ldots, \widehat{x_i}, \ldots, x_k , h_i)  \big), \\
    & (\psi_k \rtimes \nu_k) \big(  (x_1, h_1), \ldots, (x_k , h_k)  \big) \\
    & \qquad := \big(  \sum_{i=1}^k (-1)^{k-i} (-1)^{|x_i| ( |h_{i+1}| + \cdots + |h_k|)} \psi_k (h_1, \ldots, \widehat{h_i}, \ldots, h_k, x_i) , \nu_k (h_1, \ldots , h_k) \big). 
\end{align*}
Then we have $\mathrm{deg} (\mu_k \ltimes \rho_k) = \mathrm{deg} (\psi_k \rtimes \nu_k) = k-2$. We may consider the maps 
\begin{align*}
\widetilde{ \mu_k \ltimes \rho_k} , \widetilde{\psi_k \rtimes \nu_k} \in \mathrm{Hom}^{-1}_\mathrm{sym} \big(  (\mathcal{G} \oplus \mathcal{H})[-1]^{\otimes k}, (\mathcal{G} \oplus \mathcal{H})[-1]\big).
\end{align*}
Then $\widetilde{ \mu_k \ltimes \rho_k} \in C^{-1| k-1| 0}$ and $\widetilde{\psi_k \rtimes \nu_k}  \in C^{-1|0 | k-1}$. Thus $\oplus_{k \geq 1} \big(  (\widetilde{ \mu_k \ltimes \rho_k} , 0, \ldots, 0, \widetilde{\psi_k \rtimes \nu_k}  )   \big) \in \mathcal{M}^{-1}$. 

\begin{thm}
    With the above notations, $\big(  (\mathcal{G}, \{\mu_k \}_{k \geq 1}), (\mathcal{H}, \{ \nu_k \}_{k \geq 1}), \{ \rho_k \}_{k \geq 1}, \{ \psi_k \}_{k \geq 1} \big)$ is a matched pair of $L_\infty$-algebras if and only if the element  $\oplus_{k \geq 1} \big(  (\widetilde{ \mu_k \ltimes \rho_k} , 0, \ldots, 0, \widetilde{\psi_k \rtimes \nu_k}  )   \big) \in \mathcal{M}^{-1}$ is a Maurer-Cartan element in the graded Lie algebra $\mathcal{M}.$
\end{thm}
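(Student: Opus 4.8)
The plan is to run the homotopy analogue of the proof of Theorem~\ref{mc-mpl}, replacing the bidegree bookkeeping there by the tridegree bookkeeping of Lemma~\ref{lem} and invoking Theorem~\ref{mc-l-inf-rep} to recognise the constituent pieces. Write $\Pi = A + B$, where $A = \sum_{k\geq 1}\widetilde{\mu_k\ltimes\rho_k}$ and $B = \sum_{k\geq 1}\widetilde{\psi_k\rtimes\nu_k}$; by the tridegree computation recorded above, $A\in\oplus_{r\geq 0}C^{-1|r|0}$ and $B\in\oplus_{s\geq 0}C^{-1|0|s}$. Since $\llbracket~,~\rrbracket_{\mathrm{NR}}$ is graded skew-symmetric and $A,B$ are both of degree $-1$, we get
\begin{align*}
\llbracket\Pi,\Pi\rrbracket_{\mathrm{NR}} = \llbracket A,A\rrbracket_{\mathrm{NR}} + 2\,\llbracket A,B\rrbracket_{\mathrm{NR}} + \llbracket B,B\rrbracket_{\mathrm{NR}},
\end{align*}
and the first goal is to show that the Maurer--Cartan equation $\llbracket\Pi,\Pi\rrbracket_{\mathrm{NR}} = 0$ is equivalent to the simultaneous vanishing of the three summands.

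By Lemma~\ref{lem}, $\llbracket A,A\rrbracket_{\mathrm{NR}}$ lives in tridegrees with $s=0$, $\llbracket B,B\rrbracket_{\mathrm{NR}}$ in tridegrees with $r=0$, and $\llbracket A,B\rrbracket_{\mathrm{NR}}$ in the tridegrees $C^{-2|r|s}$. Comparing tridegree components --- exactly as the bidegree components were compared in Theorem~\ref{mc-mpl} --- isolates the three equations, the only point needing care being the overlap at $C^{-2|0|0}$ produced by the arity-one operations $\widetilde{\mu_1\ltimes\rho_1}$ and $\widetilde{\psi_1\rtimes\nu_1}$, which both sit in $C^{-1|0|0}$. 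With the three equations in hand, Theorem~\ref{mc-l-inf-rep} identifies $\llbracket A,A\rrbracket_{\mathrm{NR}}=0$ (i.e. $A$ Maurer--Cartan) with condition (i), that $(\mathcal{G},\{\mu_k\})$ be an $L_\infty$-algebra and $\{\rho_k\}$ a representation of it on $\mathcal{H}$; applying the same theorem after interchanging the roles of $\mathcal{G}$ and $\mathcal{H}$ identifies $\llbracket B,B\rrbracket_{\mathrm{NR}}=0$ with condition (ii).

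It then remains to expand the cross term
\begin{align*}
\llbracket A,B\rrbracket_{\mathrm{NR}} = \sum_{p,q\geq 1}\big( i_{\widetilde{\mu_p\ltimes\rho_p}}\,\widetilde{\psi_q\rtimes\nu_q} + i_{\widetilde{\psi_q\rtimes\nu_q}}\,\widetilde{\mu_p\ltimes\rho_p}\big)
\end{align*}
on a homogeneous tuple $(x_1,h_1),\ldots,(x_N,h_N)\in\mathcal{G}\oplus\mathcal{H}$, separate the $\mathcal{G}[-1]$- and $\mathcal{H}[-1]$-valued outputs, and match them against the two families of compatibility identities (iii). This last step is the main obstacle: it is the homotopy counterpart of verifying $[\mu\ltimes\rho,\psi\rtimes\nu]_{\mathrm{NR}}=0 \Leftrightarrow (\ref{11}),(\ref{22})$ in Theorem~\ref{mc-mpl}, but now one must carry the full Koszul signs $(-1)^\sigma\varepsilon(\sigma)$ and the shuffle sums over $\mathrm{Sh}(l,k-1)$. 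I expect the $\mathcal{G}$-component of $\llbracket A,B\rrbracket_{\mathrm{NR}}$ to reproduce the first displayed identity of (iii) and the $\mathcal{H}$-component the second, with the six types of terms in each identity matching the cases according to whether the inner operation is $\mu_l,\psi_l,\nu_l$ or $\rho_l$ and whether the distinguished input is fed to the outer $\psi_k$ or $\rho_k$. Splitting each shuffle according to $\sigma(l)=N$ versus $\sigma(N)=N$, as in the representation axiom, is what organises the bookkeeping; the difficulty is purely computational rather than conceptual.
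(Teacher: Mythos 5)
Your proposal is, in outline, the same proof the paper gives: split $\Pi=A+B$, expand $\llbracket\Pi,\Pi\rrbracket_{\mathrm{NR}}=\llbracket A,A\rrbracket_{\mathrm{NR}}+2\llbracket A,B\rrbracket_{\mathrm{NR}}+\llbracket B,B\rrbracket_{\mathrm{NR}}$, declare the Maurer--Cartan equation equivalent to the separate vanishing of the three summands, identify $\llbracket A,A\rrbracket_{\mathrm{NR}}=0$ and $\llbracket B,B\rrbracket_{\mathrm{NR}}=0$ with conditions (i) and (ii) via Theorem \ref{mc-l-inf-rep}, and match $\llbracket A,B\rrbracket_{\mathrm{NR}}=0$ with the compatibilities (iii); the paper, like you, leaves that last expansion to the reader.

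The problem is the separation step, and your localization of it is wrong. By Lemma \ref{lem} one has $\llbracket\widetilde{\mu_k\ltimes\rho_k},\widetilde{\psi_l\rtimes\nu_l}\rrbracket_{\mathrm{NR}}\in C^{-2|k-1|l-1}$, so every cross bracket with $l=1$ lands in $C^{-2|k-1|0}$ --- exactly the tridegrees carrying $\llbracket A,A\rrbracket_{\mathrm{NR}}$ --- and every cross bracket with $k=1$ lands in $C^{-2|0|l-1}$, the tridegrees of $\llbracket B,B\rrbracket_{\mathrm{NR}}$. Hence the overlap is not confined to $C^{-2|0|0}$: it occurs in every tridegree $C^{-2|r|0}$ and $C^{-2|0|s}$ as soon as the arity-one operations $\mu_1\ltimes\rho_1$ or $\psi_1\rtimes\nu_1$ are nonzero, and in those degrees comparing tridegree components cannot isolate the three equations. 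Moreover the obstruction is genuine rather than cosmetic: take $\mathcal{H}=0$, let $\mu_1$ be a degree $-1$ map with $\mu_1\circ\mu_1\neq 0$ (possible once $\mathcal{G}$ occupies three consecutive degrees), set $\psi_1=-\mu_1$ and all other operations to zero. Then $\Pi=\widetilde{\mu_1\ltimes\rho_1}+\widetilde{\psi_1\rtimes\nu_1}=0$ in $\mathcal{M}^{-1}$, so $\Pi$ is trivially Maurer--Cartan, yet $\llbracket A,A\rrbracket_{\mathrm{NR}}\neq 0$ and condition (i) fails: the three summands cancel inside $C^{-2|0|0}$ without vanishing separately. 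So the implication ``Maurer--Cartan $\Rightarrow$ the three separate equations'' cannot be obtained from degree bookkeeping alone; it needs either an extra hypothesis (e.g. vanishing of the arity-one pieces, which holds automatically in the ungraded case of Theorem \ref{mc-mpl}) or a formulation that keeps the two families in genuinely distinct slots rather than summing them. To be fair, the paper's own proof has the identical defect --- its tuple notation silently places $\widetilde{\mu_k\ltimes\rho_k}$ and $\widetilde{\psi_l\rtimes\nu_l}$ in disjoint positions even when their brackets share a tridegree --- so your attempt reproduces the published argument, including its unproved (and, as stated, unprovable) step.
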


\begin{proof}
    We have
    \begin{align*}
        &\llbracket \oplus_{k \geq 1} \big(  (\widetilde{ \mu_k \ltimes \rho_k} , 0, \ldots, 0, \widetilde{\psi_k \rtimes \nu_k}  )   \big) , \oplus_{k \geq 1} \big(  (\widetilde{ \mu_k \ltimes \rho_k} , 0, \ldots, 0, \widetilde{\psi_k \rtimes \nu_k}  )   \big)  \rrbracket_\mathrm{NR} \\
       & = \oplus_{p \geq 1} \oplus_{\substack{k+l = p+1 \\k, l \geq 1}} \llbracket  (\widetilde{ \mu_k \ltimes \rho_k} , 0, \ldots, 0, \widetilde{\psi_k \rtimes \nu_k}  ), (\widetilde{ \mu_l \ltimes \rho_l} , 0, \ldots, 0, \widetilde{\psi_l \rtimes \nu_l}  )   \rrbracket_\mathrm{NR} \\
        &= \oplus_{p \geq 1} \oplus_{\substack{k+l = p+1 \\k, l \geq 1}} \big( \llbracket \widetilde{ \mu_k \ltimes \rho_k}, \widetilde{\mu_l \ltimes \rho_l}  \rrbracket_\mathrm{NR}, 0, \ldots, 0,   2 \llbracket \widetilde{ \mu_k \ltimes \rho_k},  \widetilde{\psi_l \rtimes \nu_l} \rrbracket_\mathrm{NR}  , 0, \ldots, 0, \llbracket \widetilde{\psi_k \rtimes \nu_k}, \widetilde{\psi_l \rtimes \nu_l} \rrbracket_\mathrm{NR}  \big) \\
       & = \oplus_{p \geq 1} \big(     \llbracket  \sum_{k \geq 1} \widetilde{ \mu_k \ltimes \rho_k},  \sum_{k \geq 1} \widetilde{ \mu_k \ltimes \rho_k}  \rrbracket_\mathrm{NR} , 0, \ldots ,  \llbracket  \sum_{k \geq 1} \widetilde{ \mu_k \ltimes \rho_k},  \sum_{k \geq 1} \widetilde{ \psi_k \rtimes \nu_k}  \rrbracket_\mathrm{NR}, 0, \ldots,  \llbracket \sum_{k \geq 1} \widetilde{ \psi_k \rtimes \nu_k}, \sum_{k \geq 1} \widetilde{ \psi_k \rtimes \nu_k} \rrbracket_\mathrm{NR} \big).
    \end{align*}
    Thus, it follows that $\oplus_{k \geq 1} \big(  (\widetilde{ \mu_k \ltimes \rho_k} , 0, \ldots, 0, \widetilde{\psi_k \rtimes \nu_k}  )   \big)$ is a Maurer-Cartan element if and only if 
    \begin{align*}
         \llbracket  \sum_{k \geq 1} \widetilde{ \mu_k \ltimes \rho_k},  \sum_{k \geq 1} \widetilde{ \mu_k \ltimes \rho_k}  \rrbracket_\mathrm{NR} = 0, ~~~ \llbracket  \sum_{k \geq 1} \widetilde{ \mu_k \ltimes \rho_k},  \sum_{k \geq 1} \widetilde{ \psi_k \rtimes \nu_k}  \rrbracket_\mathrm{NR} = 0 ~~ \text{ and } ~~  \llbracket \sum_{k \geq 1} \widetilde{ \psi_k \rtimes \nu_k}, \sum_{k \geq 1} \widetilde{ \psi_k \rtimes \nu_k} \rrbracket_\mathrm{NR} = 0.
    \end{align*}
    In Theorem \ref{mc-l-inf-rep}, we have shown that $\llbracket  \sum_{k \geq 1} \widetilde{ \mu_k \ltimes \rho_k},  \sum_{k \geq 1} \widetilde{ \mu_k \ltimes \rho_k}  \rrbracket_\mathrm{NR} = 0$ if and only if $\mathcal{G} = (\mathcal{G}, \{ \mu_k \}_{k \geq 1})$ is an $L_\infty$-algebra and the maps $\{ \rho_k \}_{k \geq 1}$ makes the graded vector space $\mathcal{H}$ into a representation of the $L_\infty$-algebra $\mathcal{G}$. The same argument shows that $\llbracket \sum_{k \geq 1} \widetilde{ \psi_k \rtimes \nu_k}, \sum_{k \geq 1} \widetilde{ \psi_k \rtimes \nu_k} \rrbracket_\mathrm{NR} = 0$ if and only if $\mathcal{H} = (\mathcal{H}, \{ \nu_k \}_{k \geq 1})$ is an $L_\infty$-algebra and the maps $\{ \psi_k \}_{k \geq 1}$ makes the graded vector space $\mathcal{G}$ into a representation of the $L_\infty$-algebra $\mathcal{H}$. Finally, $\llbracket  \sum_{k \geq 1} \widetilde{ \mu_k \ltimes \rho_k},  \sum_{k \geq 1} \widetilde{ \psi_k \rtimes \nu_k}  \rrbracket_\mathrm{NR} = 0$ is equivalent to the compatibility conditions of the matched pair of $L_\infty$-algebras.
\end{proof}

The above result generalizes Theorem \ref{mc-mpl} in the homotopy context. In the following result, we generalize Theorem \ref{thm-bicross}.

\begin{thm}
    Let $\big( (\mathcal{G}, \{ \mu_k \}_{k \geq 1}) ,  (\mathcal{H}, \{ \nu_k \}_{k \geq 1}) , \{ \rho_k \}_{k \geq 1}, \{ \psi_k \}_{k \geq 1 }   \big)$ be a matched pair of $L_\infty$-algebras. Then the direct sum $\mathcal{G} \oplus \mathcal{H}$ inherits an $L_\infty$-algebra whose structure maps $\{ l_k : (\mathcal{G} \oplus \mathcal{H})^{\otimes k} \rightarrow \mathcal{G} \oplus \mathcal{H} \}_{k \geq 1}$ are given by
    \begin{align}\label{bicrossed-inf}
        &l_k \big(  (x_1, h_1), \ldots, (x_k, h_k)  \big) \\
        & \qquad:= \big(  \mu_k (x_1, \ldots, x_k) + \sum_{i=1}^k (-1)^{k-i} (-1)^{|x_i| ( |h_{i+1}| + \cdots + |h_k|)} \psi_k (h_1, \ldots, \widehat{h_i}, \ldots, h_k, x_i), \nonumber \\  & \qquad \qquad \qquad \nu_k (h_1, \ldots, h_k) + \sum_{i=1}^k (-1)^{k-i} (-1)^{|h_i| ( |x_{i+1}| + \cdots + |x_k|)} \rho_k (x_1, \ldots, \widehat{x_i}, \ldots, x_k, h_i)  \big), \nonumber
    \end{align}
    for any $k \geq 1$ and $(x_1, h_1), \ldots, (x_k, h_k) \in \mathcal{G} \oplus \mathcal{H}$.
\end{thm}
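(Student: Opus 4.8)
The plan is to deduce the result directly from the Maurer-Cartan characterization of a matched pair of $L_\infty$-algebras established in the preceding theorem, rather than verifying the (infinitely many) homotopy Jacobi identities for $\{ l_k \}_{k \geq 1}$ by hand.

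First I would observe that the structure maps in the statement are nothing but the sum of the two semidirect-type maps already introduced: for each $k \geq 1$,
\[
l_k = (\mu_k \ltimes \rho_k) + (\psi_k \rtimes \nu_k) : (\mathcal{G} \oplus \mathcal{H})^{\otimes k} \rightarrow \mathcal{G} \oplus \mathcal{H}.
\]
Indeed, comparing (\ref{bicrossed-inf}) with the defining formulas for $\mu_k \ltimes \rho_k$ and $\psi_k \rtimes \nu_k$, the first component of $l_k$ is $\mu_k(x_1, \ldots, x_k)$ together with the $\psi_k$-sum (the first components of the two maps), while the second component is $\nu_k(h_1, \ldots, h_k)$ together with the $\rho_k$-sum (their second components). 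Since each of $\mu_k \ltimes \rho_k$ and $\psi_k \rtimes \nu_k$ is graded skew-symmetric of degree $k-2$, so is $l_k$.

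Next, because the décalage correspondence $\mu \longleftrightarrow \widetilde{\mu}$ is linear, I have $\widetilde{l_k} = \widetilde{\mu_k \ltimes \rho_k} + \widetilde{\psi_k \rtimes \nu_k}$. Under the tridegree decomposition of $C$, the summand $\widetilde{\mu_k \ltimes \rho_k}$ lies in $C^{-1|k-1|0}$ and $\widetilde{\psi_k \rtimes \nu_k}$ in $C^{-1|0|k-1}$, so the element $\oplus_{k \geq 1} \big( (\widetilde{\mu_k \ltimes \rho_k}, 0, \ldots, 0, \widetilde{\psi_k \rtimes \nu_k}) \big)$ appearing in the preceding theorem is precisely $\sum_{k \geq 1} \widetilde{l_k} \in \mathrm{Hom}^{-1}_\mathrm{sym}((\mathcal{G} \oplus \mathcal{H})[-1])$. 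By hypothesis the quadruple is a matched pair of $L_\infty$-algebras, hence that theorem guarantees that $\sum_{k \geq 1} \widetilde{l_k}$ is a Maurer-Cartan element of the graded Lie algebra $\mathcal{M}$. By Lemma \ref{lem}, $\mathcal{M}$ is a graded Lie subalgebra of $C = \big( \oplus_{n \in \mathbb{Z}} \mathrm{Hom}^n_\mathrm{sym}((\mathcal{G} \oplus \mathcal{H})[-1]), \llbracket ~,~ \rrbracket_\mathrm{NR} \big)$, so $\sum_{k \geq 1} \widetilde{l_k}$ is a fortiori a Maurer-Cartan element of $C$. Finally, Theorem \ref{l-inf-mc}(ii) translates this into the assertion that $(\mathcal{G} \oplus \mathcal{H}, \{ l_k \}_{k \geq 1})$ is an $L_\infty$-algebra, which is the desired conclusion.

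The only genuinely delicate point is the sign bookkeeping in the middle step: one must check that the shift isomorphism $\mu \mapsto \widetilde{\mu} = (-1)^{k(k-1)/2} s \circ \mu \circ (s^{-1})^{\otimes k}$ carries $l_k$ exactly onto the component appearing in the Maurer-Cartan element of the preceding theorem, with no spurious shuffle signs creeping in. I expect this to be routine, however, since $\widetilde{(\cdot)}$ is linear and preserves each homogeneous tridegree summand; the verification therefore reduces to the identity $l_k = (\mu_k \ltimes \rho_k) + (\psi_k \rtimes \nu_k)$ at the unshifted level, which is immediate from the formulas. Thus the entire conceptual content is packaged in the previous theorem, and the present statement is its direct corollary.
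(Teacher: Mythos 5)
Your proposal is correct and is essentially the paper's own proof: both rest on the observation that $l_k = (\mu_k \ltimes \rho_k) + (\psi_k \rtimes \nu_k)$, pass to $\sum_{k \geq 1} \widetilde{l_k}$ by linearity of the shift, and conclude via the Maurer--Cartan characterization of Theorem \ref{l-inf-mc}. The only difference is bookkeeping: the paper expands $\llbracket \sum_{k \geq 1} \widetilde{l_k}, \sum_{k \geq 1} \widetilde{l_k} \rrbracket_\mathrm{NR}$ bilinearly into three brackets that vanish by the matched-pair hypothesis, whereas you quote the preceding Maurer--Cartan characterization of matched pairs together with the fact that $\mathcal{M}$ is a graded Lie subalgebra of $C$ (Lemma \ref{lem}) --- the same argument, merely packaged as a direct corollary.
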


\begin{proof}
   Note that, for any $k \geq 1$, the map $l_k$ is simply given by $(\mu_k \ltimes \rho_k) + (\psi_k \rtimes \nu_k)$. Moreover,
   \begin{align*}
      \sum_{k \geq 1} \widetilde{l_k} =  \sum_{k \geq 1} \widetilde{(\mu_k \ltimes \rho_k) + (\psi_k \rtimes \nu_k)} = \sum_{k \geq 1} \widetilde{(\mu_k \ltimes \rho_k)} + \sum_{k \geq 1} \widetilde{(\psi_k \rtimes \nu_k) }.
   \end{align*}
   Hence, on the graded Lie algebra $\big(  \oplus_{n \in \mathbb{Z}} \mathrm{Hom}^n ( (\mathcal{G} \oplus \mathcal{H})[-1]),  \llbracket ~, ~ \rrbracket_\mathrm{NR} \big)$, we have
   \begin{align*}
   & \llbracket \sum_{k \geq 1} \widetilde{l_k}, \sum_{k \geq 1} \widetilde{l_k} \rrbracket_\mathrm{NR} \\
       &= \llbracket \sum_{k \geq 1} \widetilde{(\mu_k \ltimes \rho_k) + (\psi_k \rtimes \nu_k)}, \sum_{k \geq 1} \widetilde{(\mu_k \ltimes \rho_k) + (\psi_k \rtimes \nu_k)} \rrbracket_\mathrm{NR} \\
      & = \llbracket  \sum_{k \geq 1} \widetilde{(\mu_k \ltimes \rho_k)},  \sum_{k \geq 1} \widetilde{(\mu_k \ltimes \rho_k)} \rrbracket_\mathrm{NR} + 2 \llbracket  \sum_{k \geq 1} \widetilde{(\mu_k \ltimes \rho_k)}, \sum_{k \geq 1} \widetilde{(\psi_k \rtimes \nu_k) } \rrbracket_\mathrm{NR} + \llbracket \sum_{k \geq 1} \widetilde{(\psi_k \rtimes \nu_k) }, \sum_{k \geq 1} \widetilde{(\psi_k \rtimes \nu_k) } \rrbracket_\mathrm{NR} \\
      & = 0 + 0 + 0 = 0.
   \end{align*}
   Thus, it follows from Theorem \ref{l-inf-mc} that $(\mathcal{G} \oplus \mathcal{H}, \{  l_k  \}_{k \geq 1})$ is an $L_\infty$-algebra.
\end{proof}

\begin{remark}\label{remark-last}
The $L_\infty$-algebra $(\mathcal{G} \oplus \mathcal{H}, \{ l_k \}_{k \geq 1})$ constructed in the above theorem is called the {\em bicrossed product} and it is denoted by $\mathcal{G} \Join \mathcal{H}$. Note that $\mathcal{G}$ and $\mathcal{H}$ are both $L_\infty$-subalgebras of  $\mathcal{G} \Join \mathcal{H}$. Moreover, we have the following observation. Suppose $\mathcal{G}$ and $\mathcal{H}$ are two $L_\infty$-algebras and there is an $L_\infty$-algebra structure $(\mathcal{G} \oplus \mathcal{H}, \{ l_k \}_{k \geq 1})$ on the direct sum for which $\mathcal{G}$ and $\mathcal{H}$ are both $L_\infty$-subalgebras, and
\begin{align*}
    l_k \big(  (x_1, 0), \ldots , (x_{i}, 0) , (0, h_{i+1}), \ldots , (0, h_k)  \big) = 0, \text{ for } k \geq 4 \text{ and } i , k -i \geq 2.
\end{align*}
Then there is a representation $\{ \rho_k \}_{k \geq 1}$ of the $L_\infty$-algebra $\mathcal{G}$ on the graded vector space $\mathcal{H}$, and a representation $\{ \psi_k \}_{k \geq 1}$ of the $L_\infty$-algebra $\mathcal{H}$ on the graded vector space $\mathcal{G}$ given by
\begin{align*}
    &\rho_k (x_1, \ldots, x_{k-1}, h) = \mathrm{pr}_2 l_k \big(  (x_1, 0), \ldots, (x_{k-1}, 0), (0, h)  \big) \text{ and } \\
    \psi_k (h_1, \ldots, h_{k-1}, x) &= \mathrm{pr}_1 l_k \big( (0, h_1), \ldots, (0, h_{k-1}), (x, 0)  \big), \text{ for } k \geq 1 \text{ and } x, x_i \in \mathcal{G}, h, h_i \in \mathcal{H}.
\end{align*}
Moreover, $(\mathcal{G}, \mathcal{H}, \{ \rho_k \}_{k \geq 1}, \{ \psi_k \}_{k \geq 1})$ is a matched pair of $L_\infty$-algebras.
\end{remark}

In \cite{zhu} the authors have shown that a Rota-Baxter operator of weight $1$ on a Lie algebra $\mathfrak{g}$ gives rise to a matched pair of Lie algebras. In the following, we give a generalization of this result in the homotopy context. Let $(\mathcal{G}, \{ \mu_k \}_{k \geq 1})$ be an $L_\infty$-algebra. A {\em strict Rota-Baxter operator of weight $1$} on the $L_\infty$-algebra $(\mathcal{G}, \{ \mu_k \}_{k \geq 1})$ is a degree $0$ linear map $\mathcal{R} : \mathcal{G} \rightarrow \mathcal{G}$ that satisfies
\begin{align}\label{strict-rota}
    \mu_k \big( \mathcal{R} (x_1), \ldots, \mathcal{R} (x_k) \big) = \mathcal{R} \big(  \sum_{p=1}^k \sum_{1 \leq  < i_1 < \cdots < i_p \leq k} \mu_k ( \mathcal{R}(x_1), \ldots, x_{i_1}, \ldots, x_{i_2}, \ldots, x_{i_p} , \ldots, \mathcal{R} (x_k)) \big),
\end{align}
for all $k  \geq 1$ and $x_1, \ldots, x_k \in \mathcal{G}$.

\begin{thm}
    Let $(\mathcal{G}, \{ \mu_k \}_{k \geq 1})$ be an $L_\infty$-algebra and $\mathcal{R} : \mathcal{G} \rightarrow \mathcal{G}$ be a strict Rota-Baxter operator of weight $1$. Then 
    \begin{align*}
        \mathcal{G}_\mathrm{diag} = \{ (x, x) | x \in \mathcal{G} \} ~~~ \text{ and } ~~~ \mathcal{G}_\mathcal{R} = \{ (\mathcal{R}(x), x + \mathcal{R}(x)) | x \in \mathcal{G} \}
    \end{align*}
    are both $L_\infty$-subalgebras of the direct product $L_\infty$-algebra $(\mathcal{G} \oplus \mathcal{G}, \{ \mu_k \oplus \mu_k \}_{k \geq 1} )$. 
    %Moreover, $\mathcal{G}_\mathrm{diag} \oplus \mathcal{G}_\mathcal{R} = \mathcal{G} \oplus \mathcal{G}.$

    Moreover, if $(\mathcal{G}, \{ \mu_k \}_{k \geq 1})$ is an $L_\infty$-algebra with $\mu_{k \geq 4} = 0$ then $\mathcal{G}_\mathrm{diag}$ and $\mathcal{G}_\mathcal{R}$ constitutes a matched pair of $L_\infty$-algebras. 
\end{thm}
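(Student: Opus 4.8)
The plan is to treat the two assertions in turn: first establish that $\mathcal{G}_\mathrm{diag}$ and $\mathcal{G}_\mathcal{R}$ are $L_\infty$-subalgebras of the direct product $(\mathcal{G} \oplus \mathcal{G}, \{\mu_k \oplus \mu_k\}_{k \geq 1})$, and then upgrade this to a matched pair via Remark \ref{remark-last} under the hypothesis $\mu_{k \geq 4} = 0$. For $\mathcal{G}_\mathrm{diag}$ closure is immediate, since $(\mu_k \oplus \mu_k)((x_1, x_1), \ldots, (x_k, x_k)) = (\mu_k(x_1, \ldots, x_k), \mu_k(x_1, \ldots, x_k))$ sends diagonal inputs to diagonal outputs; as $\mathcal{G}_\mathrm{diag}$ is a graded subspace, it is an $L_\infty$-subalgebra.

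The substantive step is closure of $\mathcal{G}_\mathcal{R}$. I would evaluate $\mu_k \oplus \mu_k$ on inputs $(\mathcal{R}(x_i), x_i + \mathcal{R}(x_i))$. The first component is $\mu_k(\mathcal{R}(x_1), \ldots, \mathcal{R}(x_k))$; the second, by multilinearity in each slot, expands into a sum $\sum_{S \subseteq \{1, \ldots, k\}} T_S$, where in $T_S$ the $i$-th argument is $x_i$ if $i \in S$ and $\mathcal{R}(x_i)$ otherwise. No Koszul signs arise, since no arguments are permuted and $\mathcal{R}$ has degree $0$, so $x_i$ and $\mathcal{R}(x_i)$ carry the same degree. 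Setting $z := \sum_{S \neq \emptyset} T_S$, the strict Rota-Baxter relation (\ref{strict-rota}) states precisely $T_\emptyset = \mu_k(\mathcal{R}(x_1), \ldots, \mathcal{R}(x_k)) = \mathcal{R}(z)$. Thus the first component equals $\mathcal{R}(z)$ and the second equals $T_\emptyset + z = \mathcal{R}(z) + z$, so the output is $(\mathcal{R}(z), z + \mathcal{R}(z)) \in \mathcal{G}_\mathcal{R}$, establishing closure.

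For the matched pair claim, I would first record the graded vector space decomposition $\mathcal{G} \oplus \mathcal{G} = \mathcal{G}_\mathrm{diag} \oplus \mathcal{G}_\mathcal{R}$: writing $(a, b) = (x, x) + (\mathcal{R}(y), y + \mathcal{R}(y))$ forces $y = b - a$ and $x = a - \mathcal{R}(b - a)$, so the decomposition exists and is unique, and it is degree-preserving because $\mathcal{R}$ has degree $0$. Realizing $\mathcal{G}_\mathrm{diag}$ and $\mathcal{G}_\mathcal{R}$ as complementary $L_\infty$-subalgebras, I then note that $\mu_{k \geq 4} = 0$ forces every ambient structure map $\mu_k \oplus \mu_k$ to vanish for $k \geq 4$; in particular the mixed-vanishing condition required by Remark \ref{remark-last} holds trivially. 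Applying that remark produces collections $\{\rho_k\}_{k \geq 1}$ and $\{\psi_k\}_{k \geq 1}$, defined through the projections onto the two summands, making $(\mathcal{G}_\mathrm{diag}, \mathcal{G}_\mathcal{R}, \{\rho_k\}_{k \geq 1}, \{\psi_k\}_{k \geq 1})$ a matched pair of $L_\infty$-algebras.

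I expect the main obstacle to lie in the bookkeeping of the closure argument for $\mathcal{G}_\mathcal{R}$: one must align the multilinear expansion of the second component (indexed by subsets $S$) with the combinatorial sum over index sets $\{i_1 < \cdots < i_p\}$ on the right-hand side of (\ref{strict-rota}). Once this correspondence is made explicit, the identity $T_\emptyset = \mathcal{R}(z)$ is exactly the Rota-Baxter relation and the remainder is formal. The matched pair step is then essentially a black-box application of Remark \ref{remark-last}, the only genuine input being the direct sum decomposition and the degeneracy $\mu_{k \geq 4} = 0$.
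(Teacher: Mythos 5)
Your proposal is correct and follows essentially the same route as the paper's own proof: the same closure computations (with your subset-indexed expansion $\sum_S T_S$ being just an explicit rewriting of the paper's sum over index sets, and $T_\emptyset = \mathcal{R}(z)$ being exactly the strict Rota--Baxter identity), the same unique decomposition $(a,b) = \big(a - \mathcal{R}(b-a),\, a - \mathcal{R}(b-a)\big) + \big(\mathcal{R}(b-a),\, b-a+\mathcal{R}(b-a)\big)$, and the same black-box appeal to Remark \ref{remark-last} via $(\mu_k \oplus \mu_k)_{k \geq 4} = 0$. No gaps; your explicit remarks about the absence of Koszul signs and the degree-$0$ nature of $\mathcal{R}$ are points the paper leaves implicit.
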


\begin{proof}
    Let $k \geq 1$ and $x_1, \ldots, x_k \in \mathcal{G}$. Then
    \begin{align*}
        (\mu_k \oplus \mu_k) \big(   (x_1, x_1), \ldots, (x_k , x_k )\big) = \big(   \mu_k (x_1, \ldots, x_k ) , \mu_k (x_1, \ldots, x_k) \big) \in \mathcal{G}_\mathrm{diag}.
    \end{align*}
    Hence $\mathcal{G}_\mathrm{diag} \subset \mathcal{G} \oplus \mathcal{G}$ is an $L_\infty$-subalgebra. On the other hand,
    \begin{align*}
        &(\mu_k \oplus \mu_k ) \big( (\mathcal{R}(x_1), x_1 + \mathcal{R}(x_1)), \ldots, (\mathcal{R}(x_k), x_k + \mathcal{R}(x_k)) \big) \\
       & = \big(   \mu_k ( \mathcal{R} (x_1), \ldots, \mathcal{R}(x_k)  ),  \sum_{p=1}^k \sum_{1 \leq  < i_1 < \cdots < i_p \leq k} \mu_k ( \mathcal{R}(x_1), \ldots, x_{i_1}, \ldots, x_{i_2}, \ldots, x_{i_p} , \ldots, \mathcal{R} (x_k)) \\
       & \qquad \qquad \qquad \qquad \qquad \qquad +   \mu_k ( \mathcal{R} (x_1), \ldots, \mathcal{R}(x_k)  )  \big).
    \end{align*}
    This element is in $\mathcal{G}_\mathcal{R}$ once we use the identity (\ref{strict-rota}). Thus, $\mathcal{G}_\mathcal{R} \subset \mathcal{G} \oplus \mathcal{G}$ is also an $L_\infty$-subalgebra.  

    Finally, any element $(x, y) \in \mathcal{G} \oplus \mathcal{G}$ can be uniquely written as
    \begin{align*}
        (x, y) = \big( x - \mathcal{R} (y-x), x - \mathcal{R} (y-x) \big) + \big(  \mathcal{R}(y-x), y-x +  \mathcal{R}(y-x) \big)
    \in \mathcal{G}_\mathrm{diag} \oplus \mathcal{G}_\mathcal{R}.
\end{align*} 
    Thus, we have $\mathcal{G}_\mathrm{diag} \oplus \mathcal{G}_\mathcal{R} = \mathcal{G} \oplus \mathcal{G}$ as a graded vector space. Since $\mu_{k \geq 4} = 0$, it follows that $(\mu_k \oplus \mu_k)_{k \geq 4} = 0$. Hence the final conclusion follows from Remark \ref{remark-last}.
\end{proof}

\section{Matched pairs of skeletal $L_\infty$-algebras}\label{sec7}

In this section, we consider matched pairs of skeletal $L_\infty$-algebras. We show that matched pairs of skeletal $L_\infty$-algebras are closely related to certain $3$-cocycles of matched pairs of Lie algebras. We start with the following definition \cite{baez-crans}.

\begin{defn}\label{2-term}
    A {\em $2$-term $L_\infty$-algebra} is a triple $\mathcal{G} = (\mathfrak{g}_1 \xrightarrow{\mu_1} \mathfrak{g}_0, [~,~]_\mathcal{G}, \mu_3)$ consists of a $2$-term chain complex $\mathfrak{g}_1 \xrightarrow{\mu_1} \mathfrak{g}_0$, a skew-symmetric bilinear map $[~,~]_\mathcal{G} : \mathfrak{g}_i \times \mathfrak{g}_j \rightarrow \mathfrak{g}_{i+j}$ (for $0 \leq i , j, i+j \leq 1$) and a skew-symmetric trilinear operation $\mu_3 : \mathfrak{g}_0 \times \mathfrak{g}_0 \times \mathfrak{g}_0 \rightarrow \mathfrak{g}_1$ that satisfy the following conditions 
    \begin{itemize}
        \item[(i)] $\mu_1 ([x, v]_\mathcal{G}) = [x, \mu_1 (v)]_\mathcal{G},$
        \item[(ii)] $[\mu_1 (u), v]_\mathcal{G} = [u, \mu_1 (v)]_\mathcal{G},$
        \item[(iii)] $\mu_1 ( \mu_3 (x, y, z)) = [x, [y, z]_\mathcal{G} ]_\mathcal{G} + [y, [z, x]_\mathcal{G} ]_\mathcal{G} + [z, [x, y]_\mathcal{G} ]_\mathcal{G},$
        \item[(iv)] $\mu_3 ( x, y, \mu_1(v) ) = [x, [y, v]_\mathcal{G}]_\mathcal{G} + [y, [v, x]_\mathcal{G} ]_\mathcal{G} + [v, [x, y]_\mathcal{G} ]_\mathcal{G},$
        \item[(v)] $[x, \mu_3 (y, z, z')]_\mathcal{G} - [y, \mu_3 (x, z, z')]_\mathcal{G} + [z, \mu_3 (x, y, z')]_\mathcal{G} - [z' , \mu_3 (x, y, z)]_\mathcal{G} = \mu_3 ([x, y]_\mathcal{G}, z, z') \\
        - \mu_3 ([x, z]_\mathcal{G}, y, z') + \mu_3 ([x, z']_\mathcal{G}, y, z) + \mu_3 ([y, z]_\mathcal{G}, x, z') - \mu_3 ([y, z']_\mathcal{G}, x, z) + \mu_3 ([z, z']_\mathcal{G}, x, y),$
    \end{itemize}
    for $x, y, z, z' \in \mathfrak{g}_0$ and $u, v \in \mathfrak{g}_1$.
\end{defn}

Note that $2$-term $L_\infty$-algebras are precisely those $L_\infty$-algebras whose underlying graded vector space is concentrated in arities $0$ and $1$.
A $2$-term $L_\infty$-algebra $\mathcal{G} = (\mathfrak{g}_1 \xrightarrow{\mu_1} \mathfrak{g}_0, [~,~]_\mathcal{G}, \mu_3)$ is called {\em skeletal} if $\mu_1 = 0$. In \cite{baez-crans} Baez and Crans showed that skeletal $2$-term $L_\infty$-algebras (also called skeletal $L_\infty$-algebras) are closely related to $3$-cocycles of Lie algebras. More precisely, let $\mathcal{G} = (\mathfrak{g}_1 \xrightarrow{0} \mathfrak{g}_0, [~,~]_\mathcal{G}, \mu_3)$ be a skeletal $L_\infty$-algebra. Then it follows from the condition (iii) of Definition \ref{2-term} that the vector space $\mathfrak{g}_0$ equipped with the skew-symmetric bracket $[~,~]_\mathcal{G}: \mathfrak{g}_0 \times \mathfrak{g}_0 \rightarrow \mathfrak{g}_0$ is a Lie algebra. Moreover, the condition (iv) of Definition \ref{2-term}  implies that the vector space $\mathfrak{g}_1$ with the bilinear map $[~,~]_\mathcal{G} : \mathfrak{g}_0 \times \mathfrak{g}_1 \rightarrow \mathfrak{g}_1$ forms a representation of the Lie algebra $(\mathfrak{g}_0 , [~,~]_\mathcal{G})$. Finally, the condition (v) simply means that
\begin{align*}
    \delta_\mathrm{CE} (\mu_3) = 0,
\end{align*}
where $\delta_\mathrm{CE}$ is the Chevalley-Eilenberg coboundary operator of the Lie algebra $(\mathfrak{g}_0, [~,~]_\mathcal{G})$ with coefficients in the representation $\mathfrak{g}_1$. Thus, we obtain a triple $(\mathfrak{g}_0, \mathfrak{g}_1, \mu_3)$ consisting of a Lie algebra $\mathfrak{g}_0$, a representation $\mathfrak{g}_1$ and a $3$-cocycle $\mu_3$. Conversely, if we have a triple $(\mathfrak{g}, V, \theta)$ in which $\mathfrak{g}$ is a Lie algebra, $V= (V, \rho)$ is a representation and $\theta \in Z^3 (\mathfrak{g}, V)$ is a $3$-cocycle then $\mathcal{G} := (V \xrightarrow{0} \mathfrak{g}, [~,~]_\mathcal{G}, \theta)$ is a skeletal $L_\infty$-algebra, where the bilinear operation $[~,~]_\mathcal{G}$ is given by
\begin{align*}
    [x, y]_\mathcal{G} := [x, y]_\mathfrak{g}, \quad [x, v]_\mathcal{G} = - [v, x]_\mathcal{G} := \rho_x v, \text{ for } x, y \in \mathfrak{g} \text{ and } v \in V.
\end{align*}

\medskip

Since we will mostly focus on skeletal $L_\infty$-algebras, we only define representations of them. Suppose $(\mathfrak{g}_1 \xrightarrow{0} \mathfrak{g}_0 , [~,~]_\mathcal{G}, \mu_3)$ is a skeletal $L_\infty$-algebra. A {\em representation} of it is a $2$-term trivial complex $V_1 \xrightarrow{0} V_0$ equipped with a bilinear map $\rho_2 : \mathfrak{g}_i \times V_j \rightarrow V_{i+j}$ (for $0 \leq i, j, i+j \leq 1$) and a trilinear map $\rho_3 : \mathfrak{g}_0 \times \mathfrak{g}_0 \times V_0 \rightarrow V_1$ subject to satisfy the following identities:
\begin{align}
   &\rho_3 (x, y, v) = - \rho_3 (y, x, v), \label{skel-rep1}\\
   & \rho_2 (x, \rho_2 (y, v)) - \rho_2 (y, \rho_2 (x, v)) - \rho_2 ([x, y]_\mathcal{G}, v) = 0,\\
    &\rho_2 (x, \rho_2 (y, \overline{v})) - \rho_2 (y, \rho_2 (x, \overline{v})) - \rho_2 ([x, y]_\mathcal{G}, \overline{v}) = 0,\\
   & \rho_2 (x, \rho_3 (y, z, v) ) - \rho_2 (y, \rho_3 (x, z, v)) + \rho_2 (z, \rho_3 (x, y, v)) + \rho_2 (\mu_3 (x, y, z), v) = \rho_3 ([x, y]_\mathcal{G}, z, v) \label{skel-rep4}\\
   & \qquad - \rho_3 ([x, z]_\mathcal{G}, y, v) + \rho_3 (y, z, \rho_2 (x, v)) + \rho_3 ([y, z]_\mathcal{G}, x, v) - \rho_3 (x, z, \rho_2 (y, v)) + \rho_3 (x, y, \rho_2 (z, v)), \nonumber
\end{align}
%One may also write down the explicit description of representations of a $2$-term $L_\infty$-algebra. More precisely, let $\mathcal{G} = (\mathfrak{g}_1 \xrightarrow{\mu_1} \mathfrak{g}_0, [~,~]_\mathcal{G}, \mu_3)$ be a $2$-term $L_\infty$-algebra. A representation of $\mathcal{G}$ is a $2$-term chain complex $V_1 \xrightarrow{\rho_1} V_0$ equipped with a bilinear map $\rho_2 : \mathfrak{g}_i \times V_j \rightarrow V_{i+j}$ (for $0 \leq i, j, i+j \leq 1$) and a trilinear map $\rho_3 : \mathfrak{g}_0 \times \mathfrak{g}_0 \times V_0 \rightarrow V_1$ subject to satisfy the following identities:
%\begin{itemize}
%    \item $\rho_1 (\rho_2 (x, \overline{v})) = \rho_2 (x, \rho_1 (\overline{v})),$
%    \item $\rho_2 (   \mu_1 (\overline{x}), \overline{v}) = \rho_2 ( \overline{x}, \rho_1 (\overline{v})),$
%    \item $\rho_3 (x, y, v) = - \rho_3 (y, x, v),$
%    \item $\rho_1 (\rho_3 (x, y, v)) = \rho_2 (x, \rho_2 (y, v)) - \rho_2 (y, \rho_2 (x, v)) - \rho_2 ([x, y]_\mathcal{G}, v)$,
%    \item $\rho_3 ( x, y, \rho_1 (\overline{v})) = \rho_2 (x, \rho_2 (y, \overline{v})) - \rho_2 (y, \rho_2 (x, \overline{v})) - \rho_2 ([x, y]_\mathcal{G}, \overline{v}),$
%    \item $\rho_2 (x, \rho_3 (y, z, v) ) - \rho_2 (y, \rho_3 (x, z, v)) + \rho_2 (z, \rho_3 (x, y, v)) + \rho_2 (\mu_3 (x, y, z), v) = \rho_3 ([x, y]_\mathcal{G}, z, v)$ \\
%    $- \rho_3 ([x, z]_\mathcal{G}, y, v) + \rho_3 (y, z, rho_2 (x, v)) + \rho_3 ([y, z]_\mathcal{G}, x, v) - \rho_3 (x, z, \rho_2 (y, v)) + \rho_3 (x, y, \rho_2 (z, v)),$
%\end{itemize}
for $x, y, z \in \mathfrak{g}_0$, $v \in V_0$ and $ \overline{v} \in V_1$. A representation as above may be denoted by the triple $(V_1 \xrightarrow{0} V_0, \rho_2, \rho_3)$. It follows from the above conditions that any skeletal $L_\infty$-algebra $(\mathfrak{g}_1 \xrightarrow{0} \mathfrak{g}_0, [~,~]_\mathcal{G}, \mu_3)$ can be regarded as a representation of itself.

In the following, we explicitly describe matched pairs of skeletal $L_\infty$-algebras and give a cohomological characterization.

\begin{defn}
    A {\em matched pair of skeletal $L_\infty$-algebras} is given by a tuple
    \begin{align*}
        \big(  (\mathfrak{g}_1 \xrightarrow{0} \mathfrak{g}_0, [~,~]_\mathcal{G}, \mu_3) , (\mathfrak{h}_1 \xrightarrow{0} \mathfrak{h}_0, [~,~]_\mathcal{H}, \nu_3), \rho_2, \rho_3, \psi_2, \psi_3   \big)
    \end{align*}
    in which $ \mathcal{G} := (\mathfrak{g}_1 \xrightarrow{0} \mathfrak{g}_0, [~,~]_\mathcal{G}, \mu_3)$ and $\mathcal{H} := (\mathfrak{h}_1 \xrightarrow{0} \mathfrak{h}_0, [~,~]_\mathcal{H}, \nu_3)$ are skeletal $L_\infty$-algebras,
\begin{itemize}
  \item the maps $\rho_2 : \mathfrak{g}_i \times \mathfrak{h}_j \rightarrow \mathfrak{h}_{i+j}$ and $\rho_3 : \mathfrak{g}_0 \times \mathfrak{g}_0 \times \mathfrak{h}_0 \rightarrow \mathfrak{h}_1$ make the triple $(\mathfrak{h}_1 \xrightarrow{0} \mathfrak{h}_0 , \rho_2, \rho_3)$ into a representation of the skeletal $L_\infty$-algebra $\mathcal{G}$,

   \item the maps $\psi_2 : \mathfrak{h}_i \times \mathfrak{g}_j \rightarrow \mathfrak{g}_{i+j}$ and $\psi_3 : \mathfrak{h}_0 \times \mathfrak{h}_0 \times \mathfrak{g}_0 \rightarrow \mathfrak{g}_1$ make the triple $(\mathfrak{g}_1 \xrightarrow{0} \mathfrak{g}_0 , \psi_2, \psi_3)$ into a representation of the skeletal $L_\infty$-algebra $\mathcal{H}$,

   \item the identities are hold: for $x, y \in \mathfrak{g}_0$, $v \in \mathfrak{g}_1$, $h, k \in \mathfrak{h}_0$ and $w \in \mathfrak{h}_1$,
    \begin{align}
            \rho_2 ( [x, v]_\mathcal{G} ,h) =~& \rho_2 (x, \rho_2 (v, h)) - \rho_2 (v, \rho_2 (x, h)), \label{skew-l-repp1}\\
            \psi_2 ([h, w]_\mathcal{H}, x) =~& \psi_2 (h, \psi_2 (w, x)) - \psi_2 (w, \psi_2 (h, x)), \\
            \rho_2 (x, [h, w]_\mathcal{H}) =~& [\rho_2 (x, h), w]_\mathcal{H} + [h, \rho_2 (x, w)]_\mathcal{H} + \rho_2 (\psi_2 (w, x), h) - \rho_2 (\psi_2 (h, x), w), \\
            \rho_2 (v, [h, k]_\mathcal{H}) =~& [\rho_2 (v, h), k]_\mathcal{H} + [h, \rho_2 (v, k)]_\mathcal{H} + \rho_2 (\psi_2 (k, v), h) - \rho_2 (\psi_2 (h, v), k), \\
            \psi_2 (h, [x, v]_\mathcal{G}) =~& [\psi_2 (h, x), v]_\mathcal{G} + [x, \psi_2 (h, v)]_\mathcal{G} + \psi_2 (\rho_2 (v, h), x) - \psi_2 (\rho_2 (x, h), v), \\
            \psi_2 (w, [x, y]_\mathcal{G}) =~& [\psi_2 (w, x), y]_\mathcal{G} + [x, \psi_2 (w, y)]_\mathcal{G} + \psi_2 (\rho_2 (y, w), x) - \psi_2 (\rho_2 (x, w), y), \label{skew-l-repp6}
        \end{align}

   \item the following compatibilities are hold: for all $x, y, z \in \mathfrak{g}_0$ and $h, k, k' \in \mathfrak{h}_0$,
   \begin{align}
      & [x, \psi_3 (h, k, y)]_\mathcal{G} - [y, \psi_3 (h, k, x)]_\mathcal{G} - \psi_3 (h, k, [x, y]_\mathcal{G} ) - \psi_3 (\rho_2 (x, h), k, y) \nonumber \\
      & \qquad + \psi_3 (\rho_2 (x, k), h, y) + \psi_3 (\rho_2 (y, h), k, x) - \psi_3 (\rho_2 (y, k), h, x) = 0, \label{skel1}\\
      & \rho_2 (x, \mu_3 (h, k, k')) + \rho_2 ( \psi_3 (k, k', x), h) - \rho_2 (\psi_3 (h, k', x), k) + \rho_2 ( \psi_3 (h, k, x), k') \nonumber  \\
      & \qquad - \nu_3 (\rho_2 (x, h), k, k') + \nu_3 (\rho_2 (x, k), h, k') - \nu_3 (\rho_2 (x, k'), h, k) = 0, \label{skel2}
   \end{align}
      \begin{align}
      & [h, \rho_3 (x, y, k)]_\mathcal{H} - [k, \rho_3 (x, y, h)]_\mathcal{H} - \rho_3 (x, y, [h, k]_\mathcal{H} ) - \rho_3 (\psi_2 (h, x), y, k) \nonumber \\
      & \qquad + \rho_3 (\psi_2 (h, y), x, k) + \rho_3 (\psi_2 (k, x), y, h) - \rho_3 (\psi_2 (k, y), x, h) = 0, \label{skel3} \\
      & \psi_2 (h, \nu_3 (x, y, z)) + \psi_2 ( \rho_3 (y, z, h), x) - \psi_2 (\rho_3 (x, z, h), y) + \psi_2 ( \rho_3 (x, y, h), z) \nonumber  \\
      & \qquad - \mu_3 (\psi_2 (h, x), y, z) + \mu_3 (\psi_2 (h, y), x, z) - \mu_3 (\psi_2 (h, z), x, y) = 0. \label{skel4}
   \end{align}
   \end{itemize}
\end{defn}

We are now in a position to prove our final theorem that characterizes matched pairs of skeletal $L_\infty$-algebras.

\begin{thm}
    There is a one-to-one correspondence between the following sets
    \begin{align*}
        \bigg\{  \substack{\text{the set of all matched pairs of} \\ \text{ skeletal } L_\infty\text{-algebras }}  \bigg\} 
        \longleftrightarrow \bigg\{  \substack{ \text{the set of triples } \big( (\mathfrak{g}, \mathfrak{h}, \rho, \psi) , (V, W, \alpha, \beta ), (F_1, 0, F_3) \big), \\ \text{where }  (\mathfrak{g}, \mathfrak{h}, \rho, \psi) \text{is a matched pair of Lie algebras,} \\ (V, W, \alpha, \beta) \text{ is a representation and} \\ (F_1, 0, F_3) \text{ is a 3-cocycle} }  \bigg\}.
    \end{align*}
\end{thm}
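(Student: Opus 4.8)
The plan is to make the correspondence completely explicit by decomposing every structure map of a matched pair of skeletal $L_\infty$-algebras according to the internal grading (degrees $0$ and $1$) of $\mathfrak{g}_\bullet$ and $\mathfrak{h}_\bullet$, and then checking that the defining axioms reorganize into exactly the three families of conditions on the right-hand side. First I would fix the underlying data. Setting $\mathfrak{g} := \mathfrak{g}_0$, $\mathfrak{h} := \mathfrak{h}_0$, $V := \mathfrak{g}_1$, $W := \mathfrak{h}_1$, I take the degree-$0$ brackets $[~,~]_\mathcal{G}|_{\mathfrak{g}_0 \times \mathfrak{g}_0}$ and $[~,~]_\mathcal{H}|_{\mathfrak{h}_0 \times \mathfrak{h}_0}$ as the Lie brackets, $\rho := \rho_2|_{\mathfrak{g}_0 \times \mathfrak{h}_0}$ and $\psi := \psi_2|_{\mathfrak{h}_0 \times \mathfrak{g}_0}$ as the actions, and read off the representation data from the mixed-degree pieces: $(\rho_V)_x v := [x,v]_\mathcal{G}$, $(\psi_W)_h w := [h,w]_\mathcal{H}$, $(\rho_W)_x w := \rho_2(x,w)$, $(\psi_V)_h v := \psi_2(h,v)$, $\alpha_v h := \rho_2(v,h)$, and $\beta_w x := \psi_2(w,x)$. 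The trilinear maps then assemble into a $3$-cochain by $F_1 := (\mu_3, \rho_3) \in C^{2|0}$, $F_2 := 0 \in C^{1|1}$, and $F_3 := (\psi_3, \nu_3) \in C^{0|2}$ (after reordering the arguments of $\psi_3$ to the pattern $\mathfrak{g} \otimes \wedge^2 \mathfrak{h}$); the vanishing of $F_2$ reflects the absence of any mixed trilinear map in the skeletal data.

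Next I would show the axioms split cleanly along these three groups. The degree-$0$ content of conditions (iii), (iv) of Definition \ref{2-term} for $\mathcal{G}$ and $\mathcal{H}$ makes $\mathfrak{g}_0, \mathfrak{h}_0$ into Lie algebras and $\mathfrak{g}_1, \mathfrak{h}_1$ into their representations, while the degree-$0$ flatness parts of the first two bullets give that $\rho, \psi$ are representations; the compatibility conditions (\ref{11}), (\ref{22}) are the all-degree-zero instances of the matched-pair relations, so $(\mathfrak{g},\mathfrak{h},\rho,\psi)$ is a matched pair of Lie algebras. Rewriting the six identities (\ref{skew-l-repp1})--(\ref{skew-l-repp6}) under the dictionary above---using graded skew-symmetry to flip brackets such as $[\alpha_v h, k]_\mathcal{H} = -(\psi_W)_k \alpha_v h$---turns them verbatim into the representation identities (\ref{1-iden})--(\ref{6-iden}), so $(V,W,\alpha,\beta)$ is a representation of $(\mathfrak{g},\mathfrak{h},\rho,\psi)$.

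The core of the proof is matching the remaining trilinear conditions against the single equation $\delta_\mathrm{MPL}((F_1,0,F_3)) = 0$. By (\ref{exp-diff-co}) with $F_2 = 0$, this coboundary has four components, lying in $C^{3|0}, C^{2|1}, C^{1|2}, C^{0|3}$ and equal respectively to $\delta^{\mu \ltimes \rho}(F_1)$, $\delta^{\psi \rtimes \nu}(F_1)$, $\delta^{\mu \ltimes \rho}(F_3)$, $\delta^{\psi \rtimes \nu}(F_3)$. I would split each into its $V$- and $W$-valued parts and compare with the explicit coboundary formulas preceding the Proposition. The $V$-part of $\delta^{\mu \ltimes \rho}(F_1)$ is $\delta_\mathrm{CE}(\mu_3)$ and the $W$-part of $\delta^{\psi \rtimes \nu}(F_3)$ is $\delta_\mathrm{CE}(\nu_3)$, so their vanishing is condition (v) for $\mathcal{G}$ and for $\mathcal{H}$, i.e. $\mu_3, \nu_3$ are $3$-cocycles. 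The $W$-part of $\delta^{\mu \ltimes \rho}(F_1)$ and the $V$-part of $\delta^{\psi \rtimes \nu}(F_3)$ are exactly the representation $3$-cocycle condition (\ref{skel-rep4}) for the $\mathcal{G}$- and $\mathcal{H}$-representations, and the four mixed parts reproduce (\ref{skel1})--(\ref{skel4}): the $V$- and $W$-parts of $\delta^{\mu \ltimes \rho}(F_3)$ (bidegree $1|2$) give (\ref{skel1}), (\ref{skel2}), and the $V$- and $W$-parts of $\delta^{\psi \rtimes \nu}(F_1)$ (bidegree $2|1$) give (\ref{skel4}), (\ref{skel3}). Hence $(F_1,0,F_3)$ is a $3$-cocycle precisely when all the trilinear axioms hold.

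The main obstacle will be this last, purely bookkeeping comparison: matching signs, shuffle and position indices, and the internal-degree sign rules in the four explicit formulas for $\delta^{\mu \ltimes \rho}$ and $\delta^{\psi \rtimes \nu}$ against the hand-written relations (\ref{skel-rep4}) and (\ref{skel1})--(\ref{skel4}); I expect this to require care but no new idea (it also pins down the harmless typos whereby $\mu_3$ in (\ref{skel2}) and $\nu_3$ in (\ref{skel4}) should read $\nu_3$ and $\mu_3$). Since the dictionary is a bijection on the level of maps---each assignment is the same restriction-and-reassembly read in opposite directions---the two constructions are manifestly mutually inverse; the converse direction simply reads the identifications backwards, the cocycle condition now supplying (\ref{skel-rep4}) and (\ref{skel1})--(\ref{skel4}). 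Conceptually, the whole correspondence is the internal-degree decomposition of the Maurer-Cartan characterization, so it may also be deduced from Theorem \ref{mc-mpl} together with its $L_\infty$-analogue in Section \ref{sec6}.
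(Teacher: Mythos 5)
Your proposal is correct and follows essentially the same route as the paper's proof: both decompose the skeletal data by internal degree, identify the degree-zero and mixed-degree axioms with a matched pair of Lie algebras $(\mathfrak{g}_0,\mathfrak{h}_0,\rho_2,\psi_2)$ together with its representation $(\mathfrak{g}_1,\mathfrak{h}_1,\rho_2,\psi_2)$, and package $(\mu_3,\rho_3)$ and $(\psi_3,\nu_3)$ as $F_1\in C^{2|0}$ and $F_3\in C^{0|2}$ so that the remaining trilinear axioms become exactly the vanishing of the four bidegree components of $\delta_\mathrm{MPL}((F_1,0,F_3))$, which is how the paper argues via $\mu_3\ltimes\rho_3$ and $\psi_3\rtimes\nu_3$. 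Your observation that (\ref{skel2}) and (\ref{skel4}) contain type-mismatch typos (the roles of $\mu_3$ and $\nu_3$ should be interchanged there) is also correct.
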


\begin{proof}
Let $ \big(  (\mathfrak{g}_1 \xrightarrow{0} \mathfrak{g}_0, [~,~]_\mathcal{G}, \mu_3) , (\mathfrak{h}_1 \xrightarrow{0} \mathfrak{h}_0, [~,~]_\mathcal{H}, \nu_3), \rho_2, \rho_3, \psi_2, \psi_3   \big)$ be a matched pair of skeletal $L_\infty$-algebras. Note that
\begin{align*}
   & \mathcal{G} = (\mathfrak{g}_1 \xrightarrow{0} \mathfrak{g}_0, [~,~]_\mathcal{G}, \mu_3) \text{ is a skeletal } L_\infty\text{-algebra} \\
   & \qquad \Longleftrightarrow \mathfrak{g}_0 \text{ is a Lie algebra, } \mathfrak{g}_1 \text{ is a representation of the Lie algebra } \mathfrak{g}_0 \text{ and } \mu_3 \in \mathrm{Hom}(\wedge^3 \mathfrak{g}_0 , \mathfrak{g}_1) \\
   & \qquad \qquad \text{ is a Chevalley-Eilenberg $3$-cocycle,}\\
   & \mathcal{H} = (\mathfrak{h}_1 \xrightarrow{0} \mathfrak{h}_0, [~,~]_\mathcal{H}, \nu_3) \text{ is a skeletal } L_\infty\text{-algebra} \\
   & \qquad \Longleftrightarrow \mathfrak{h}_0 \text{ is a Lie algebra, } \mathfrak{h}_1 \text{ is a representation of the Lie algebra } \mathfrak{h}_0 \text{ and } \nu_3 \in \mathrm{Hom}(\wedge^3 \mathfrak{h}_0 , \mathfrak{h}_1) \\
   & \qquad \qquad \text{ is a Chevalley-Eilenberg $3$-cocycle}.
\end{align*}
We define two maps $\mu_3 \ltimes \rho_3 \in C^{2|0} (\mathfrak{g}_0, \mathfrak{h}_0; \mathfrak{g}_1, \mathfrak{h}_1)$ and $ \psi_3 \rtimes \nu_3 \in C^{0|2} (\mathfrak{g}_0, \mathfrak{h}_0; \mathfrak{g}_1, \mathfrak{h}_1)$ by
\begin{align*}
    (\mu_3 \ltimes \rho_3) ((x,h), (y, k), (z, k'))  :=~& \big(  \mu_3 (x, y, z), \rho_3 (x, y, k') +  \rho_3 (y, z, h) + \rho_3 (z, x, k) \big), \\
    (\psi_3 \rtimes \nu_3) ((x,h), (y, k), (z, k')) := ~&\big( \psi_3 (h, k, z)  + \psi_3 (k, k', x) + \psi_3 (k', h, y), \nu_3 (h, k, k') \big),
\end{align*}
for $(x, h), (y, k), (z, k') \in \mathfrak{g}_0 \oplus \mathfrak{h}_0$. With these notations, we observe that
   \begin{align*}
   &(\mathfrak{h}_1 \xrightarrow{0} \mathfrak{h}_0, \rho_2, \rho_3) \text{ is a representation of the skeletal } L_\infty\text{-algebra } \mathcal{G} \\
   & \qquad \Longleftrightarrow \mathfrak{h}_0, \mathfrak{h}_1 \text{ both are representations of the Lie algebra }\mathfrak{g}_0 \text{ and } (\delta^{[~,~]_\mathcal{G} \ltimes \rho_2} (\mu_3 \ltimes \rho_3) )(x, y, z, h) = 0, \\
   & \qquad \qquad \quad (\text{follows from }(\ref{skel-rep1})-(\ref{skel-rep4})) \\
   &(\mathfrak{g}_1 \xrightarrow{0} \mathfrak{g}_0, \psi_2, \psi_3) \text{ is a representation of the skeletal } L_\infty\text{-algebra } \mathcal{H} \\
   & \qquad \Longleftrightarrow \mathfrak{g}_0, \mathfrak{g}_1 \text{ both are representations of the Lie algebra }\mathfrak{h}_0 \text{ and } (\delta^{\psi_2 \rtimes [~,~]_\mathcal{H}} (\psi_3 \rtimes \nu_3) )( h, k, k', x) = 0.
\end{align*}
The identities (\ref{skew-l-repp1})-(\ref{skew-l-repp6}) imply that $(\mathfrak{g}_1, \mathfrak{h}_1, \rho_2, \psi_2)$ is a representation of the matched pair of Lie algebras $(\mathfrak{g}_0, \mathfrak{h}_0, \rho_2, \psi_2 )$. Finally, the compatibility conditions (\ref{skel1}), (\ref{skel2}) are equivalent to $\delta^{[~,~]_\mathcal{G} \ltimes \rho_2} (\psi_3 \rtimes \nu_3) = 0$ and the conditions (\ref{skel3}), (\ref{skel4}) are equivalent to $\delta^{\psi_2 \rtimes [~,~]_\mathcal{H}} (\mu_3 \ltimes \rho_3) = 0$. Therefore, we obtain the required triple
\begin{align*}
    \big((\mathfrak{g}_0, \mathfrak{h}_0, \rho_2, \psi_2 ), (\mathfrak{g}_1, \mathfrak{h}_1, \rho_2, \psi_2 ) ,(\mu_3 \ltimes \rho_3, 0 , \psi_3 \rtimes \nu_3) \big).
\end{align*}
The converse part is precisely the reverse calculation.
\end{proof}

\medskip

\medskip

\noindent  {\bf Acknowledgements.} The authors would like to thank the Department of Mathematics, IIT Kharagpur for providing the beautiful academic atmosphere where the research has been carried out.
%\vspace*{1cm}

\medskip

\noindent {\bf Data Availability Statement.} Data sharing does not apply to this article as no new data were created or analyzed in this study.

\end{document}